\documentclass[letterpaper,11pt]{article}
\usepackage[backend=biber,style=alphabetic,citestyle=alphabetic,giveninits=true,maxnames=5,maxbibnames=9, maxcitenames=6,mincitenames=4,maxcitenames=5,doi=false,isbn=false,url=false]{biblatex} 
\AtEveryBibitem{\clearlist{language}}
\usepackage{tikz-cd}
\AtEveryBibitem{\clearlist{language}}
\usepackage{tikz-cd}
\renewbibmacro*{in:}{%
  \ifentrytype{article}
    {}
    {\bibstring{in}%
     \printunit{\intitlepunct}}}
\usepackage{graphicx} 
\usepackage{palatino,mathrsfs}
\usepackage[mathcal]{euler}
\usepackage{xcolor}
\usepackage{epstopdf,epsfig}
\usepackage{pdfpages}

\usepackage[colorlinks=true,linkcolor=black,hypertexnames=false]{hyperref}
\usepackage{comment} 
\usepackage{amsmath,amsthm,amsfonts,amssymb,latexsym,amscd,enumerate,url,hyperref}
\usepackage{amssymb}

\usepackage{mathtools}
\usepackage{graphicx}
\usepackage[all,knot]{xy}

\xyoption{arc}
\usepackage{multirow}
\usepackage{caption}
\usepackage{soul}

\newtheorem*{theorem*}{Theorem}
  
 \newtheorem*{corollary*}{Corollary}

\newtheorem*{lemma*}{Lemma}

\newtheorem*{proposition*}{Proposition}

\newtheorem*{claim*}{Claim}
\theoremstyle{definition}

\newtheorem*{definition*}{Definition}
 
\newtheorem*{example*}{Example}

\newtheorem*{question*}{Question}
  
\newtheorem*{problem*}{Problem}
  
\newtheorem*{remark*}{Remark}

\def\R{\mathbb R}
\def\C{\mathbb C}

\def\fg{\mathfrak{g}}
\def\fk{\mathfrak{k}}

\def\fa{\mathfrak{a}}

\def\fl{\mathfrak{l}}
\def\fp{\mathfrak{p}}
\def\fq{\mathfrak{q}}%
\def\g{\boldsymbol{\mathfrak{g}}}
\def\k{\boldsymbol{\mathfrak{k}}}
\def\p{\boldsymbol{\mathfrak{p}}}

\def\l{\boldsymbol{\mathfrak{l}}}

\title{Dirac operators for Cartan motion groups}

\author{Spyridon Afentoulidis-Almpanis \& Eyal Subag
}

\newcommand{\Addresses}{{
		\bigskip
		\footnotesize
		
		Spyridon~Afentoulidis-Almpanis \par\nopagebreak\textsc{Dept. of Mathematics, Bar-Ilan University, Ramat-Gan, 5290002 Israel}\par\nopagebreak
		\textit{E-mail address}: \texttt{spyridon.almpanis@biu.ac.il}
		
		\medskip
		
Eyal~Subag \par\nopagebreak\textsc{Dept. of Mathematics, Bar-Ilan University, Ramat-Gan, 5290002 Israel}\par\nopagebreak
\textit{E-mail address}: \texttt{eyal.subag@biu.ac.il}
		
		\medskip

}}

\date{\today}

\begin{document}

\maketitle

 \begin{abstract} We introduce and study the algebraic Dirac operator for the Cartan motion group associated with a real reductive Lie group. We establish its fundamental properties, including a square formula involving a natural Casimir-type element, define the corresponding notion of Dirac cohomology, and prove a strong form of Vogan's conjecture: a module in the Dirac series is completely determined by its Dirac cohomology. We explicitly determine the Dirac series of a Cartan motion group.

We explain how the Dirac theory of a Cartan motion group can be obtained as the zero fiber of the recently introduced Dirac theory for the algebraic deformation family of the corresponding real reductive group.
	\end{abstract}

\medskip
\noindent\textit{2020 Mathematics Subject Classification.} Primary 22E47; Secondary 22E45, 17B10, 15A66, 14D99.

\tableofcontents

\section{Introduction}

 \subsection{Aim}
 The algebraic Dirac operator associated with a real reductive group $G(\R)$ is a
 useful tool in the study of representations of $G(\R)$; among other
 things, it detects the infinitesimal character of representations
 with nonzero Dirac cohomology and provides an effective criterion
 for disproving unitarity of certain classes of representations.

 Attached to $G(\R)$ there is a semidirect product Lie group
 $G(\R)_0$, the so-called Cartan motion group of $G(\R)$. In general,
 $G(\R)_0$ is non-reductive; in particular, the classical construction of the
 algebraic Dirac operator does not apply to it verbatim. There is
 also a one-parameter family of groups, the deformation family,
 interpolating between $G(\R)$ and $G(\R)_0$.

 The purpose of this paper is to introduce and study the algebraic
 Dirac operator for Cartan motion groups. We determine the Dirac
 series of a Cartan motion group completely, and we prove a strong
 form of Vogan's conjecture in this setting: the Dirac cohomology of
 a module in the Dirac series determines the module itself. In
 addition, we discuss some relations between the recently introduced
 Dirac theory of the deformation family of $G(\R)$ and the Dirac
 theory of the corresponding Cartan motion group.

 In the next subsections we describe the motivation for this paper
 and our main results.
 
\subsection{Motivation and context }\label{MotivationAndContext}

Let $G(\R)$ be a noncompact connected real reductive group with a Cartan involution $\theta$ such that $K(\R):=G(\R)^{\theta}$ is a maximal compact subgroup. On the level of Lie algebras, $\theta$ gives rise to the Cartan decomposition
\[\fg(\R):=\operatorname{Lie}(G(\R))=\fk(\R)\oplus \fp(\R), \]
where $\fk(\R)=\operatorname{Lie}(K(\R))$ is the $(+1)$-eigenspace of $\theta$ and $\fp(\R)$ is the $(-1)$-eigenspace. The group $K(\R)$ naturally acts on the vector space $\fp(\R)$ by restriction of the adjoint action. This gives rise to a semidirect product group, the \textit{Cartan motion group} of $G(\R)$, given by
\[G(\R)_0:=\fp(\R)\rtimes K(\R),\]
in which the normal subgroup $\fp(\R)$ is an abelian vector group.

The terminology is rooted in geometry. The homogeneous space $X=G(\R)/K(\R)$ is a Riemannian symmetric space of noncompact type, and its tangent space at the base point $eK(\R)$ is naturally identified with $\fp(\R)$. The group $G(\R)_0$ acts on the flat space $\fp(\R)$ by \textit{motions}: the subgroup $\fp(\R)$ acts by translations and $K(\R)$ acts by linear isometries. 
Harmonic analysis on the flat symmetric space $\fp(\R)\cong G(\R)_0/K(\R)$ was studied by Helgason, who developed the tangent space analysis relating functions on $X$ to functions on $\fp(\R)$ \cite{Helgason1980}; for the theory of spherical functions on Cartan motion groups see the work of Rader \cite{MR965746}.

Being a semidirect product with an abelian normal subgroup, the group $G(\R)_0$ is usually non-reductive, but its representation theory is well understood. Its unitary dual admits a complete description via the Mackey machine \cite{zbMATH03536358}. The admissible dual of $G(\R)_0$ was determined by Champetier and Delorme \cite{delormeChampetier} and by Rader \cite{MR965746} when $G(\R)$ is semisimple with a finite center, and recently, in the general real reductive case, by Gaudillot-Estrada \cite{GaudillotEstrada2026CovariantRepresentations}.

Although $G(\R)_0$ is a very different group from $G(\R)$, their representation theories are deeply related. Motivated by the notion of contraction of Lie groups from mathematical physics, see e.g. \cite{MR55352,MR779059,MR2942592,MR3077834}, Mackey noticed that a large part of the unitary dual of a noncompact semisimple Lie group closely resembles a large part of the unitary dual of its Cartan motion group, and he suggested that a precise correspondence between the two should exist \cite{MR409726}.

The subject was taken up again decades later by Higson who, for complex semisimple groups, sharpened Mackey's analogy into a bijection, now known as the \textit{Mackey--Higson bijection}: in \cite{MR2391803} for the tempered duals of $G(\R)$ and $G(\R)_0$, and in \cite{MR2815133} for the admissible duals. He also used the bijection to reprove the Connes--Kasparov conjecture for complex semisimple groups.
 Later on, Afgoustidis generalized Higson's results to real reductive Lie groups \cite{MR4079418,MR4400734,MR3989145}; see also \cite{clare1,clare2} for related recent work on the Connes--Kasparov isomorphism. For an approach to the Mackey analogy through deformations of $\mathcal{D}$-modules, see \cite{MR4542720}. Let us also mention some further recent developments in this circle of ideas. Clare, Higson, and Rom\'an constructed an embedding of the group $C^*$-algebra of a Cartan motion group into the reduced $C^*$-algebra of $G(\R)$, and used it to give a $C^*$-algebraic characterization of the Mackey--Higson bijection \cite{ClareHigsonRoman2025}; see \cite{HigsonRoman2020} for an earlier treatment of the complex case. Afgoustidis and Clare analyzed this embedding as a stratified equivalence and, along the way, described the unitary dual of a Cartan motion group as a spectral extended quotient \cite[Thm.~1.16]{AfgoustidisClare2026}. Bradd, Higson, and Yuncken proved that the Connes--Kasparov isomorphism is equivalent to a $K$-theoretic form of Vogan's theorem \cite{MR2401817} on minimal $K$-types of tempered representations with real infinitesimal character \cite{BraddHigsonYuncken2024}.

Many of the above works exploit, in one form or another, a family of Lie groups $\{G_t\}_{t \in [0,1]}$ satisfying
\[
G_t \cong
\begin{cases}
G(\R) & t \neq 0, \\
G(\R)_0 & t = 0.
\end{cases}
\]
Dirac operators already appeared in this circle of ideas. Recall that, by the work of Parthasarathy and of Atiyah and Schmid \cite{Par,MR463358}, the discrete series representations of $G(\R)$ can be realized via geometric Dirac operators on $G(\R)/K(\R)$. Moreover, some principal series representations can be embedded in kernels of suitable Dirac operators \cite{MZ1,MZ1b}. 

In the case of an equal-rank group $G(\R)$, Afgoustidis showed that the unitary irreducible representations of $G(\R)_0$ that correspond to discrete series representations of $G(\R)$ via the Mackey--Higson bijection can be constructed by letting suitably twisted geometric Dirac operators vary along $\{G_t\}_{t \in [0,1]}$ \cite{MR4079418}.

The above-mentioned family of Lie groups is closely related to an algebraic family $(\g_d,K)$ of Harish-Chandra pairs over $\mathbb{A}^1_{\C}$, the \textit{deformation family} associated with $G(\R)$. Algebraic families of Harish-Chandra pairs, together with their modules, were first studied systematically by Bernstein, Higson, and the second author \cite{eyalbern,MR4123111}, who developed them into an algebraic framework for the study of contractions of Lie groups and of their representations. For $t\neq 0$ the fiber of $(\g_d,K)$ at $t$ is the underlying Harish-Chandra pair of $G(\R)$, and for $t=0$ it is the underlying Harish-Chandra pair of $G(\R)_0$. In \cite{eyallietheory}, the second author employed algebraic families of modules for $(\g_d,K)$ to describe the Mackey--Higson bijection algebraically in the case of $G(\R) = SL_2(\R)$, and proposed a conjectural extension of this algebraic description to all real reductive groups.

On the Dirac-theoretic side, the algebraic counterpart of the geometric Dirac operator of $G(\R)$ is the algebraic Dirac operator, an element of $\mathcal{U}(\fg)\otimes_{\C} \operatorname{Cl}(\fp)$ introduced by Vogan \cite{vogantalks}. Vogan attached to any Harish-Chandra module its Dirac cohomology and conjectured that, when nonzero, the Dirac cohomology determines the infinitesimal character of the module. The conjecture was proved by Huang and Pand\v{z}i\'{c} \cite{huangpandzic}, and Dirac cohomology has since become a standard tool in the representation theory of real reductive groups; see \cite{pandzic} and the survey \cite{Renard2014DiracSurvey}. Recently, the authors constructed Dirac operators and Dirac cohomology for the deformation family $(\g_d,K)$, as well as for other related families, and established in that setting an analogue of Vogan's conjecture \cite{afentoulidisalmpanis2025diracoperatorsalgebraicfamilies}.

As mentioned above, the zero fiber of the deformation family $(\g_d,K)$ is the underlying Harish-Chandra pair of $G(\R)_0$. Against this background, the following questions present themselves.

\begin{itemize}
    \item Is there an intrinsic algebraic Dirac operator for the Cartan motion group $G(\R)_0$? Note that since $\fg_0$, the complexified Lie algebra of $G(\R)_0$, is non-reductive, it does not, in general, admit a nondegenerate invariant symmetric bilinear form, and the classical construction does not directly apply.
    \item Assuming such an operator exists, does its square admit a formula in terms of a Casimir-type element, and does it lead to a good notion of Dirac cohomology?
    \item Which irreducible unitary representations of $G(\R)_0$ have nonzero Dirac cohomology? That is, what is the Dirac series of $G(\R)_0$, and does an analogue of Vogan's conjecture hold for it?
    \item Can the Dirac theory of $G(\R)_0$ be obtained from the Dirac theory of the deformation family by specialization at zero?
\end{itemize}

In the present paper we give complete answers to all four questions.

\subsection{Main results and structure of the paper}\label{MainResults}

Section \ref{DiracTheory} develops the algebraic Dirac theory of a Cartan motion group intrinsically, that is, entirely within $G(\R)_0$. Writing $\fg_0=\fk\oplus \fp_0$ for the complexified Lie algebra of $G(\R)_0$, we observe that even though $\fg_0$, in general, admits no nondegenerate invariant symmetric bilinear form, the $K$-module $\fp_0$ carries a natural $(\fg_0,K)$-invariant nondegenerate symmetric form $\beta_0$, obtained by restricting an extension $\beta$ of the Killing form of the derived algebra of $\fg$. This allows us to associate with $G(\R)_0$ the same Clifford algebra $\operatorname{Cl}(\fp_0,\beta_0)$ that is associated with $G(\R)$, and to define, in a canonical coordinate-free manner, the algebraic Dirac operator
\[D_0\in A_0:=\mathcal{U}(\fg_0)\otimes_{\C}\operatorname{Cl}(\fp_0,\beta_0)\]
of $G(\R)_0$. We also introduce a canonical Casimir-type element $\Omega_0$, which lies in the $K$-invariant part of the center of $\mathcal{U}(\fg_0)$, and prove the square formula
\[2D_0^2=\Omega_0\otimes 1_{\operatorname{Cl}(\fp_0,\beta_0)}.\]
With these constructions in hand, the Dirac cohomology $H_{D_0}(V)$ of a $(\fg_0,K)$-module $V$ is defined, in analogy with the reductive case, as the $\widetilde{K}$-module
\[
H_{D_0}(V):=\frac{\ker D_0(V)}{\ker D_0(V)\cap \operatorname{im}\hspace{0.5mm}D_0(V)},
\]
where $D_0(V)$ is the operator by which $D_0$ acts on $V\otimes_{\C}S_0$, with $S_0$ a spin module for $\operatorname{Cl}(\fp_0,\beta_0)$, and $\widetilde{K}$ is the relevant spin double cover of $K$.

Following the reductive case, we define the Dirac series of $G(\R)_0$ to consist of the equivalence classes of irreducible infinitesimally unitary $(\fg_0,K)$-modules with nonzero Dirac cohomology. Our first main result, Theorem \ref{DCCMG}, determines the Dirac series completely.

\begin{theorem*}[Theorem \ref{DCCMG}]
An irreducible infinitesimally unitary $(\fg_0,K)$-module belongs to the Dirac series of $G(\R)_0$ if and only if $\fp_0$ acts trivially on it. Every such module is finite-dimensional.
\end{theorem*}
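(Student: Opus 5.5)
The plan is to combine the square formula $2D_0^2=\Omega_0\otimes 1$ with a positivity argument for infinitesimally unitary modules, exactly as in Parthasarathy's Dirac inequality. First I will identify $\Omega_0$ concretely. Since it is the canonical Casimir-type element built from $\beta_0$ on the abelian ideal $\fp_0$, I expect $\Omega_0=\sum_i X_iX_i$ (up to normalization), where $\{X_i\}$ is a $\beta_0$-orthonormal basis of $\fp_0$. Note that $\fp_0$ is abelian and $K$-stable, so this is indeed central and $K$-invariant. In particular $D_0=\sum_i X_i\otimes X_i$, and since $[\fp_0,\fp_0]=0$ there is no cubic term, so $D_0^2$ is indeed purely $\Omega_0\otimes 1$.

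Next, on an irreducible $(\fg_0,K)$-module $V$, the central element $\Omega_0$ should act by a scalar $c_V$; this is a Schur/Dixmier-type lemma for Harish-Chandra modules with countable dimension. Hence $D_0^2=\tfrac12 c_V$ on $V\otimes S_0$.
\begin{itemize}
\item If $c_V\neq 0$, then $D_0$ is invertible, so $\ker D_0(V)=0$ and $H_{D_0}(V)=0$.
\item If $c_V=0$, then $D_0$ is nilpotent.
\end{itemize}

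The key step, which I expect to be the main subtlety, is the unitarity input. For infinitesimally unitary $V$, the real form $\fp(\R)$ acts by skew-Hermitian operators with respect to the invariant inner product. I will choose the basis $X_i\in\fp(\R)$ orthonormal for $\beta_0$, which is positive definite on $\fp(\R)$ after fixing signs as in the reductive case. Then $\Omega_0=\sum X_i^2$ acts as $-\sum X_i^*X_i$, a nonpositive operator. Equivalently, with a suitable Hermitian structure on $S_0$, $D_0$ is (skew-)Hermitian on $V\otimes S_0$. For a (skew-)Hermitian $D_0$ we have $\ker D_0\cap\operatorname{im}D_0=0$ and $\ker D_0^2=\ker D_0$, each $\widetilde K$-isotypic component being finite-dimensional.

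If $H_{D_0}(V)\neq 0$, then $\ker D_0\neq 0$, which forces $c_V=0$. Then $\sum_i\|X_iv\|^2=0$ for every $v$, so $X_iv=0$ for all $i$; that is, $\fp_0$ acts trivially. Conversely, if $\fp_0$ acts trivially, then $D_0$ acts by zero, so $H_{D_0}(V)=V\otimes S_0\neq 0$.

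Finally, when $\fp_0$ acts trivially, $V$ is an irreducible module for $(\fk,K)$. This means it is an irreducible representation of the compact group $K$, hence finite-dimensional; conversely, any irreducible unitary $K$-representation inflated to $G(\R)_0$ gives such a module. The points requiring care are the normalization and sign conventions, namely that $\beta_0$ is positive on $\fp(\R)$ and that $\Omega_0$ really equals $\sum X_i^2$ rather than involving $\fk$. I would verify these from the definitions in Section \ref{DiracTheory} first.
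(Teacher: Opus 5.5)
Your proof is correct, but it takes a genuinely different route from the paper.

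The paper goes through the classification of the admissible dual. It writes $V\cong V(\lambda,\tau)$ using the Champetier--Delorme/Gaudillot-Estrada theorem, and uses the Mackey machine (with a Nelson-type integration argument) to show that infinitesimal unitarity forces $\lambda$ to be real on $\fa^\sigma$. It then computes that $\Omega_0$ acts on $V(\lambda,\tau)$ by the scalar $-\sum_i\lambda(z_i)^2$. Applying the square formula to a nonzero element of $\ker D_0(V)$ gives $\lambda\equiv 0$, i.e.\ $V\cong E^\tau$.

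You argue intrinsically, in Parthasarathy style. With $X_i\in\fp^\sigma$ $\beta_0$-orthonormal, $\Omega_0=\sum_iX_i^2$, and the invariant Hermitian form gives $-\langle\Omega_0v,v\rangle=\sum_i\|X_iv\|^2$. So once the square formula forces $\Omega_0$ to vanish on $V$, all of $\fp_0$ acts by zero. This avoids both the classification and the unitarity criterion.

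Your Dixmier step is valid, since an irreducible $(\fg_0,K)$-module has countable dimension. It can even be dropped. Applying $2D_0^2=\Omega_0\otimes 1$ to a nonzero $x\in\ker D_0(V)$ yields a nonzero $v$ with $\Omega_0v=0$, hence $\fp_0v=0$. The $\fp_0$-invariants then form a nonzero $(\fg_0,K)$-submodule, because $\fp_0$ is a $K$-stable ideal.

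What the paper's route buys is the explicit value of $\Omega_0$ on every irreducible module, unitary or not. The paper reuses this for the real-rank-one theorem on non-unitary modules, where your positivity argument is not available.

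A few minor slips:
\begin{itemize}
\item The Dirac operator is $D_0=\sum_iX_i\otimes\gamma_0(X_i)$, not $\sum_i X_i\otimes X_i$.
\item The aside about finite-dimensional $\widetilde K$-isotypic components is unneeded, and unjustified without admissibility. The identity $\ker D^2=\ker D$ holds for any skew-Hermitian $D$ on a pre-Hilbert space.
\item Here $K$ is the complex reductive group, not a compact one. Finite-dimensionality still follows the same way, from local finiteness of the $K$-action.
\end{itemize}
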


In other words, the Dirac series of $G(\R)_0$ consists precisely of the irreducible representations pulled back from $K(\R)$ along the canonical quotient $G(\R)_0\longrightarrow K(\R)$. The problem of determining the Dirac series for  real reductive groups recently drew considerable attention, see e.g., \cite{zbMATH07209584,zbMATH07357500,dongwong,zbMATH07702898,zbMATH08086149}.

Our second main result is an analogue of Vogan's conjecture for Cartan motion groups. It is, in fact, stronger than its reductive counterpart. Whereas Vogan’s conjecture for $G(\mathbb R)$ asserts that Dirac cohomology determines the infinitesimal character of a module in the Dirac series, for $G(\mathbb R)_0$  the following holds.  

\begin{theorem*}[Theorem \ref{sss271}]
Let $V$ be a $(\fg_0,K)$-module in the Dirac series of $G(\R)_0$.
Then the isomorphism class of $V$ is completely determined by its
Dirac cohomology $H_{D_0}(V)$.
\end{theorem*}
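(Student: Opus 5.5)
By Theorem \ref{DCCMG}, every module $V$ in the Dirac series of $G(\R)_0$ is finite-dimensional, has $\fp_0$ acting trivially, and is therefore an irreducible $K$-module pulled back along $G(\R)_0\to K(\R)$. My plan is to compute $H_{D_0}(V)$ explicitly for such $V$ and then read off the $K$-type of $V$ from it.

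\textbf{Step 1: $D_0$ acts by zero.} The Dirac operator has the form $D_0=\sum_i Z_i\otimes Z_i^*$ (plus possibly a cubic Clifford term), where $\{Z_i\}$ is a basis of $\fp_0$ and $\{Z_i^*\}$ is the $\beta_0$-dual basis. I expect no cubic term, since $[\fp_0,\fp_0]=0$ in $\fg_0$, and $D_0$ should be linear in $\fp_0$. If $\fp_0$ acts by zero on $V$, then $D_0(V)=0$ on $V\otimes S_0$. Hence $\ker D_0(V)=V\otimes S_0$ and $\operatorname{im}D_0(V)=0$, which gives
\[
H_{D_0}(V)\cong V\otimes S_0
\]
as $\widetilde K$-modules, where $\widetilde K$ acts diagonally through the spin double cover on $S_0$. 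I will check this against the paper's actual definition of $D_0$. If a cubic term does appear, it must vanish because the relevant bracket in $\fg_0$ is zero.

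\textbf{Step 2: recover $V$ from $V\otimes S_0$.} Let $V=E_\lambda$ be the irreducible $K$-module with highest weight $\lambda$. I must show that the $\widetilde K$-module $E_\lambda\otimes S_0$ determines $\lambda$. I will argue with characters: in the representation ring of $\widetilde K$, multiplication by $\operatorname{ch}S_0$ should be injective, because $\operatorname{ch}S_0$ is a nonzero element of the character ring restricted to a maximal torus, which is an integral domain (a Laurent polynomial ring, after passing to the identity component or handling components carefully). Thus $\operatorname{ch}(E_\lambda)=\operatorname{ch}(H_{D_0}(V))/\operatorname{ch}(S_0)$ is determined.

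A more hands-on alternative uses highest weights. The highest weights of $S_0$ are of the form $\rho_n^{(w)}$. The unique extremal weight of $E_\lambda\otimes S_0$ that is largest with respect to a fixed generic linear functional equals $\lambda$ plus the top weight of $S_0$. Since the weights of $S_0$ are known, $\lambda$ is recovered.

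\textbf{Main obstacle.} The step I expect to be hardest is disconnected $K$. In that case the character ring of $K$ is not a domain on a single torus, and $V$ is an irreducible $K$-module rather than a $K^\circ$-module. I would handle it through character theory on all of $K$. Characters of $\widetilde K$-modules determine the modules, since the modules are completely reducible. So it suffices that $\operatorname{ch}S_0$ is nonvanishing on a dense open subset of $\widetilde K$. This should follow from the Weyl-type product formula
\[
\operatorname{ch}S_0(t)=\prod(t^{\alpha/2}+t^{-\alpha/2})
\]
over noncompact roots on a Cartan subgroup, together with an analogous computation on the other components.

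Then $\operatorname{ch}V=\operatorname{ch}H_{D_0}(V)/\operatorname{ch}S_0$ on that dense set, and by continuity $\operatorname{ch}V$ is determined everywhere. This fixes $V$ up to isomorphism.
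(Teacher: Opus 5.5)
Your proposal is correct and follows the paper's proof. Since $\fp_0$ acts trivially, $D_0(V)=0$ and $H_{D_0}(V)=V\otimes_{\C}S_0$, and $\chi_V$ is then recovered by dividing by $\chi_{S_0}$. The paper divides on a neighborhood of the identity where $\chi_{S_0}\neq0$ and extends by analyticity of characters. This is equivalent to your integral-domain argument on $\widetilde K^{\circ}$, using $\mathcal C(\widetilde K^{\circ})=K$; note that $\widetilde K$ itself may be disconnected even when $K$ is connected.

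Your ``main obstacle'' does not arise: the standing assumption that $G(\R)$ is connected makes $K$ connected, and the paper uses exactly this. Connectedness is genuinely needed, because your sketch for disconnected $K$ fails. The character $\chi_{S_0}$ vanishes at elements acting on $\fp$ with eigenvalue $-1$, so it can vanish identically on a whole non-identity component, and a ``dense open set'' where it is nonzero need not exist.
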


In Section \ref{DS} we prove Theorem \ref{DCCMG}. The proof uses the explicit realization of the irreducible $(\fg_0,K)$-modules on spaces of functions on $K(\R)$, following the description of the admissible dual of $G(\R)_0$ from \cite{delormeChampetier,GaudillotEstrada2026CovariantRepresentations}. In addition, for a Cartan motion group of a real reductive group of real rank one, we go beyond the unitary case: in Theorem \ref{33} we show that an irreducible $(\fg_0,K)$-module, not assumed infinitesimally unitary, has nonzero Dirac cohomology if and only if $\fp_0$ acts trivially on it. In particular, in the real rank one case, the Dirac series is not enlarged by dropping the unitarity assumption.

In Section \ref{thedeformationfamily} we change perspective and promote the idea that a Cartan motion group should be thought of as a special fiber of a family. We recall the deformation family $(\g_d,K)$ and the Dirac theory that we developed for it in \cite{afentoulidisalmpanis2025diracoperatorsalgebraicfamilies}, and we construct canonical identifications between the zero fibers of the relevant objects attached to $(\g_d,K)$ and the corresponding objects attached to $G(\R)_0$. Specifically, we obtain a $\widetilde{K}$-equivariant isomorphism of algebras
\[\Phi_A:\boldsymbol{A}_d|_0=\left(\mathcal{U}(\g_d)\otimes_{\C[z]}\operatorname{Cl}(\p_d,\boldsymbol{\beta}_{d})\right)\Big|_0\longrightarrow A_0\]
carrying the zero fiber of the Dirac operator $D(\g_d,\boldsymbol{\beta}_d)$ of the family to $D_0$, and the zero fiber of the Casimir $\Omega_d$ of the family to $\Omega_0$. In this precise sense, the intrinsic Dirac theory of $G(\R)_0$ developed in Sections \ref{DiracTheory} and \ref{DS} is the zero fiber of the Dirac theory of the deformation family.

Section \ref{DiracCohomologyAndSpecialization} studies the relation between Dirac cohomology and specialization at zero. For an algebraic family of Harish-Chandra modules $\boldsymbol{V}$ for $(\g_d,K)$, we first show that the specialization at zero of the action of the family Dirac operator computes the Dirac cohomology of the $(\fg_0,K)$-module $\boldsymbol{V}|_0$. We then construct a canonical $\widetilde{K}$-equivariant comparison morphism
\[
\mu_{\boldsymbol V}:
H_{D(\g_d,\boldsymbol\beta_d)}(\boldsymbol V)|_0
\longrightarrow H_{D_0}(\boldsymbol{V}|_0)
\]
from the zero fiber of the Dirac cohomology of the family into the Dirac cohomology of its zero fiber. We give a sufficient condition for $\mu_{\boldsymbol V}$ to be injective; in particular, $\mu_{\boldsymbol V}$ is injective whenever $\boldsymbol{V}|_0$ is infinitesimally unitary. Finally, we present an example in which $\mu_{\boldsymbol V}$ is injective but not surjective, showing that Dirac cohomology need not commute with specialization at zero.

\subsection*{Acknowledgments}
This research was supported by the Israel Science Foundation (grant No. 1040/22).

\section{Dirac theory for Cartan motion groups}\label{DiracTheory}
This section develops the algebraic Dirac theory of a Cartan motion group. We specify the relevant Clifford algebra, construct the Dirac operator and establish its square formula, define Dirac cohomology, describe the associated Dirac series (the proof is given in Section \ref{DS}), and prove an analog of Vogan's conjecture.
Some of the background constructions parallel the reductive-group preliminaries in \cite[Sec.~2]{afentoulidisalmpanis2025diracoperatorsalgebraicfamilies}; here they are reformulated intrinsically for the Cartan motion group and adapted to its non-reductive Lie algebra.

\subsection{The Cartan motion group of a real reductive group}\label{Gr}
In this subsection we specify the basic assumptions and establish running notation for the Cartan motion group of a real reductive group.
\subsubsection{The reductive group}
Let $G(\R)$ be a connected, noncompact real reductive group in the sense of \cite{MR4146144}. Thus $G(\R)$  is the fixed point set $G^{\sigma}$ of a connected complex reductive algebraic group $G$ endowed with an antiholomorphic involution $\sigma$. Choose a corresponding Cartan involution $\theta$ of $G$ and put
\[
K:=G^{\theta}, \qquad K(\R):=K^{\sigma}.
\]
Then $K$ is a complex reductive algebraic group, while $K(\R)$ is a maximal compact subgroup of $G(\R)$. Write
\[
\fg=\operatorname{Lie}(G), \qquad \fk=\operatorname{Lie}(K)
\]
for the complex Lie algebras, and
\[
\fg^{\sigma}=\operatorname{Lie}(G^{\sigma}), \qquad
\fk^{\sigma}=\operatorname{Lie}(K^{\sigma})
\]
for their real forms. The $\pm1$ eigenspace decompositions for $\theta$, also known as Cartan decompositions, are
\[
\fg=\fk\oplus\fp,\qquad
\fg^{\sigma}=\fk^{\sigma}\oplus\fp^{\sigma},
\]
where $\fk=\fg^{\theta}$, $\fp=\fg^{-\theta}$, and $\fp^{\sigma}=\fp\cap\fg^{\sigma}$. We write $\fg'=[\fg,\fg]$ for the derived Lie algebra of $\fg$.

\subsubsection{The Cartan motion group} 
Restricting the adjoint representation gives an action of $K(\R)$ on the vector space $\fp^{\sigma}$. The associated semidirect product
\[
G(\R)_0:=\fp^{\sigma}\rtimes K(\R)
\]
is called the \textit{Cartan motion group of $G(\R)$}.

\begin{remark*}
Any two Cartan involutions of $G(\R)$ give rise to isomorphic Cartan motion groups.
\end{remark*}

The  Lie algebra of $G(\R)_0$ is the semidirect product
\[
\fg^{\sigma}_0=\fp^{\sigma}\rtimes\fk^{\sigma}.
\]
On its underlying vector space $\fp^{\sigma}\times\fk^{\sigma}$, the bracket is given by
\[
[(X_1,Y_1),(X_2,Y_2)]_0
=([Y_1,X_2]+[X_1,Y_2],[Y_1,Y_2]),
\qquad (X_i,Y_i)\in\fp^{\sigma}\times\fk^{\sigma}.
\]
Here the action of $\fk^{\sigma}$ on $\fp^{\sigma}$ is the restriction of the adjoint action of $\fg^{\sigma}$.
The map
\[
I:\fg^{\sigma}_0\longrightarrow\fg^{\sigma},\qquad I(X,Y)=X+Y,
\]
with $(X,Y)\in \fp^{\sigma}\times\fk^{\sigma}$
is a canonical vector-space isomorphism. We use it throughout to identify the two underlying vector spaces. Accordingly,
\[
\fg^{\sigma}_0=\fk^{\sigma}\oplus\fp^{\sigma}_0,\qquad
\fp^{\sigma}_0=\fp^{\sigma},
\]
as $\fk^{\sigma}$-modules, and the Lie bracket of $\fg^{\sigma}_0$ when realized on the vector space $\fg^{\sigma}$ via $I$ is given by 
\[
[X_1+Y_1,X_2+Y_2]_0
=[Y_1,X_2]+[X_1,Y_2]+[Y_1,Y_2]
\]
for $X_i\in\fp^{\sigma}$ and $Y_i\in\fk^{\sigma}$. In particular, $\fp^{\sigma}_0$ is abelian.

After complexification we obtain
\[
G_0=\fp\rtimes K,\qquad
\fg_0=\fp_0\rtimes\fk,\qquad \fp_0=\fp.
\]
The semidirect products use the restricted adjoint actions. We continue to identify $\fg_0$ and $\fg$ as vector spaces, while distinguishing their Lie brackets by writing $[\ ,\ ]_0$ for the bracket of $\fg_0$.

\subsection{Clifford algebra for Cartan motion groups}
In this subsection we explain how the Clifford algebra associated with the reductive Lie algebra $\fg$ also serves as the Clifford algebra associated with $\fg_0$, the Lie algebra of the Cartan motion group.

\subsubsection{The invariant symmetric form on $\fp_0$}\label{invform}

The Lie algebra $\fg_0$ does not, in general, admit a nondegenerate
invariant symmetric bilinear form. For the construction of the
Clifford algebra and the Dirac operator below, however, it is enough
to have such a form on $\fp_0$.

Choose, once and for all, a symmetric bilinear form $\beta$ on $\fg$
satisfying the following conditions.
\begin{enumerate}[(1)]
\item The restriction of $\beta$ to $\fg'=[\fg,\fg]$ is the Killing
form of $\fg'$.
\item The restriction of $\beta$ to $\fp^\sigma$ is positive definite,
and the restriction of $\beta$ to $\fk^\sigma$ is negative definite.
\item The form $\beta$ is $\theta$-invariant, that is,
$\beta(\theta X,\theta Y)=\beta(X,Y)$ for all $X,Y\in\fg$;
equivalently, $\fk$ and $\fp$ are $\beta$-orthogonal.
\item The form $\beta$ is invariant under both $K$ and $\fg$;
explicitly,
\begin{align*}
\beta(\operatorname{Ad}(k)X_1,\operatorname{Ad}(k)X_2)
&=\beta(X_1,X_2),
&&X_1,X_2\in\fg,\quad k\in K,\\
\beta([Z,X],Y)+\beta(X,[Z,Y])
&=0,
&&X,Y,Z\in\fg.
\end{align*}
\end{enumerate}
Conditions (2) and (3) imply that $\beta$ is nondegenerate on $\fg$.

Such forms exist. Indeed, let $\mathfrak{z}(\fg)$ be the center of
$\fg$, so that $\fg=\mathfrak{z}(\fg)\oplus\fg'$, both summands being
stable under $\theta$, $\sigma$, $K$, and $\fg$. For any
$\fg$-invariant symmetric bilinear form on $\fg$, the subspaces
$\mathfrak{z}(\fg)$ and $\fg'$ are orthogonal, and the Killing form
of $\fg'$ satisfies (2)--(4) on $\fg'$. Hence one may take $\beta$
to be the orthogonal direct sum of the Killing form of $\fg'$ and any
symmetric bilinear form on
$\mathfrak{z}(\fg)
=(\mathfrak{z}(\fg)\cap\fk)\oplus(\mathfrak{z}(\fg)\cap\fp)$
for which the two summands are orthogonal, and which is negative
definite on $\mathfrak{z}(\fg)\cap\fk^\sigma$ and positive definite
on $\mathfrak{z}(\fg)\cap\fp^\sigma$; such a form on
$\mathfrak{z}(\fg)$ is automatically $K$- and $\fg$-invariant,
because $K$ and $\fg$ act trivially on $\mathfrak{z}(\fg)$.
Condition (3) is not needed in this section; it will be used in
Section \ref{thedeformationfamily}, where the Casimir element of
$\fg$ is decomposed according to $\fg=\fk\oplus\fp$.

Using the vector-space identification $\fp_0=\fp$, define
\[
\beta_0
:=
\beta|_{\fp_0\times\fp_0}.
\]

\begin{lemma*}
The form $\beta_0$ is a nondegenerate
$(\fg_0,K)$-invariant symmetric bilinear form on $\fp_0$.
\end{lemma*}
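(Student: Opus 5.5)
The plan is to check the three properties separately: symmetry, nondegeneracy, and $(\fg_0,K)$-invariance.

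\textbf{Symmetry.} This is immediate, since $\beta_0$ is the restriction of the symmetric form $\beta$.

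\textbf{Nondegeneracy.} I will use conditions (2) and (3).
\begin{itemize}
\item By (2), $\beta$ restricted to $\fp^\sigma$ is a positive definite real form.
\item Its complex bilinear extension to $\fp=\fp^\sigma\oplus i\fp^\sigma$ is therefore nondegenerate. Indeed, a real orthonormal basis of $\fp^\sigma$ gives a complex basis of $\fp$ in which the Gram matrix is the identity.
\item For this to compute $\beta|_{\fp\times\fp}$, I need $\beta$ to be the complex-bilinear extension of its values on $\fp^\sigma$. This holds because $\beta$ is a complex bilinear form on $\fg$ and $\fp^\sigma$ spans $\fp$ over $\C$.
\end{itemize}
Alternatively, one can argue from (3) alone: $\beta$ is nondegenerate on $\fg$ and $\fk\perp\fp$, so its restriction to $\fp$ must be nondegenerate. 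Otherwise a nonzero vector of $\fp$ orthogonal to $\fp$ would also be orthogonal to $\fk$, hence to all of $\fg$.

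\textbf{$K$-invariance.} The group $K$ acts on $\fp_0=\fp$ through the restriction of $\operatorname{Ad}$, as in the semidirect product $G_0=\fp\rtimes K$. So $K$-invariance of $\beta_0$ is the first identity in (4), restricted to $X_1,X_2\in\fp$.

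\textbf{$\fg_0$-invariance.} First I need to say what the $\fg_0$-action on $\fp_0$ is. The natural reading is the adjoint action of $\fg_0$ on its ideal $\fp_0$, namely $Z\cdot X=[Z,X]_0$. Decompose $Z=Z_\fk+Z_\fp$ with $Z_\fk\in\fk$ and $Z_\fp\in\fp_0$.
\begin{itemize}
\item \emph{The $\fk$-part.} From the bracket formula, $[Z_\fk,X]_0=[Z_\fk,X]$ for $X\in\fp$. So the required identity $\beta([Z_\fk,X],Y)+\beta(X,[Z_\fk,Y])=0$ is the second identity in (4).
\item \emph{The $\fp$-part.} From the bracket formula, $[Z_\fp,X]_0=0$, since $\fp_0$ is abelian. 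The invariance condition then reads $0+0=0$, which holds trivially.
\end{itemize}

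\textbf{Compatibility.} The two actions should be consistent, in the sense that the differential of the $K$-action agrees with the $\fk$-action. This agreement is built into the definition of a $(\fg_0,K)$-structure on $\fp_0$, so I will only remark on it.

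\textbf{Main obstacle.} There is no real obstacle here. The only point requiring care is the identification of the $\fg_0$-module structure on $\fp_0$, and the observation that the abelian ideal $\fp_0$ acts by zero. That observation is exactly what lets a form on $\fp_0$ be invariant even though $\fg_0$ itself has no invariant nondegenerate form.
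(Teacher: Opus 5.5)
Your proof is correct and follows the paper's argument essentially step for step: symmetry by restriction, nondegeneracy by complexifying the positive definite form on $\fp^\sigma$, $K$-invariance from (4), and $\fg_0$-invariance by writing $Z=Y+X$ with $Y\in\fk$, $X\in\fp_0$, noting that $\fp_0$ is abelian in $\fg_0$ so only $Y$ acts, and applying the $\fg$-invariance of $\beta$. Your alternative nondegeneracy argument is also valid, but it does not use (3) alone: it also needs $\beta$ to be nondegenerate on $\fg$, which the paper itself derives from (2) and (3).
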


\begin{proof}
The form $\beta_0$ is symmetric because $\beta$ is symmetric. Since
$\beta$ is positive definite on the real form
$\fp_0^\sigma=\fp^\sigma$, its complexification $\beta_0$ is
nondegenerate.

The $K$-invariance of $\beta_0$ follows immediately from the
$K$-invariance of $\beta$ and the fact that $\fp_0$ is $K$-stable.

It remains to verify invariance under $\fg_0$. Let
$Z\in\fg_0$ and $X_1,X_2\in\fp_0$. Decompose  $Z=Y+X$ with $Y\in\fk$ and $X\in\fp_0$.
Since $\fp_0$ is abelian in $\fg_0$, we have
\[
[Z,X_i]_0=[Y,X_i]_0=[Y,X_i],
\qquad i=1,2.
\]
Therefore, by the $\fg$-invariance of $\beta$,
\begin{align*}
\beta_0([Z,X_1]_0,X_2)
+\beta_0(X_1,[Z,X_2]_0)
&=
\beta([Y,X_1],X_2)
+\beta(X_1,[Y,X_2])=0.
\end{align*}
Thus $\beta_0$ is invariant under $\fg_0$.
\end{proof}

When the center of $\fg$ has a nonzero intersection with $\fp$, the
form $\beta_0$ may depend on the chosen extension $\beta$ of the
Killing form. Accordingly, throughout the paper all Clifford
algebras, Casimir elements, and Dirac operators are understood to be
canonical relative to this fixed choice of $\beta$. In the
semisimple case, where $\beta$ is the Killing form, no such choice is
involved.

\subsubsection{The Clifford algebra}\label{2.2.2}
We associate with $\fg_0$ \emph{the same} Clifford algebra that is associated with $\fg$. 
Explicitly, the Clifford algebra is denoted by  
$\operatorname{Cl}(\mathfrak{p}_0,\beta|_{\fp_0})=\operatorname{Cl}(\mathfrak{p},\beta|_{\fp})$ and we realize it as  the quotient of the tensor algebra $T(\mathfrak{p}_0)$ by the two-sided ideal $I(\fp_0,\beta_0)$ generated by all elements of the form
\begin{equation*}
	X\otimes Y+Y\otimes X-\beta_0(X,Y), \quad X,Y\in\mathfrak{p}_0.
\end{equation*} 
We denote   the canonical embedding of $\mathfrak{p}_0$ into $\operatorname{Cl}(\mathfrak{p}_0,\beta_0)$ by $\gamma_0=\gamma_{\fp_0,\beta_0}$. 
 
Below we mention known properties of $\operatorname{Cl}(\mathfrak{p}_0,\beta_0)$ that will be used throughout the text. For more information see \cite{goodman,pandzic}. We  shall follow the notation of \cite{afentoulidisalmpanis2025diracoperatorsalgebraicfamilies}.

\subsubsection{The embedding  of $\fk$ in the Clifford algebra}\label{223}
For $a,b\in\fp_0$, define
\begin{equation*}
R_{\beta_0,a,b}(v)=\beta_0(b,v)a-\beta_0(a,v)b,
\qquad v\in\fp_0.
\end{equation*}
These  operators span $\mathfrak{so}(\fp_0,\beta_0)$; see \cite[Lemma~6.2.1]{goodman}. Moreover, there is a unique injective Lie algebra homomorphism
\[
\varphi_{\beta_0}:\mathfrak{so}(\fp_0,\beta_0)\longrightarrow
\operatorname{Cl}(\fp_0,\beta_0)
\]
such that
\[
\varphi_{\beta_0}(R_{\beta_0,a,b})
=\frac12[\gamma_0(a),\gamma_0(b)].
\]

The action of $\fk$ on $\fp_0$ preserves $\beta_0$, and hence defines a Lie algebra homomorphism $\operatorname{ad}_0:\fk\longrightarrow\mathfrak{so}(\fp_0,\beta_0)$. For $Y\in\fk$ and $X\in\fp_0$, this action is given by $\operatorname{ad}_0(Y)(X)=[Y,X]_0=[Y,X]$.

Below we  use the composite map
\[
\alpha_0=\alpha_{\beta_0}:=
\varphi_{\beta_0}\circ\operatorname{ad}_0:
\fk\longrightarrow \operatorname{Cl}(\fp_0,\beta_0).
\] 

\subsection{The generalized pair for Cartan motion groups}\label{genpair}
Set
\[
A_0=A(\fg_0,\beta_0)
:=\mathcal{U}(\fg_0)\otimes_{\C}\operatorname{Cl}(\fp_0,\beta_0).
\]
The map
\begin{equation*}
\Delta_{\beta_0}:\fk\longrightarrow A_0,\qquad
\Delta_{\beta_0}(X)=X\otimes1+1\otimes\alpha_{\beta_0}(X),
\end{equation*}
is an injective Lie algebra homomorphism. We refer to it as the \textit{diagonal embedding}.

The group $K$ acts diagonally on the two factors of $A_0$. Differentiating this action gives the inner derivation
\[
X\longmapsto[\Delta_{\beta_0}(X),\_],
\qquad X\in\fk.
\]
Consequently, $A_0$, the $K$-action, and the map $\Delta_{\beta_0}$ form a generalized pair in the sense of \cite[Ch.~I.6]{KNV}. We write $\fk_{\Delta_0}:=\Delta_{\beta_0}(\fk)$ and denote the corresponding copy of $\mathcal{U}(\fk)$ in $A_0$ by $\mathcal{U}(\fk_{\Delta_0})$.

\subsection{Dirac operator for Cartan motion groups}
In this subsection we give a canonical, coordinate-independent definition of the Dirac operator of a Cartan motion group. We also define a canonical Casimir-type element in the center of the universal enveloping algebra of $\fg_0$, and prove that the square of the Dirac operator is proportional to this element.
\subsubsection{Canonical elements }\label{CanEl} 
Let $\fl\subseteq\fp_0$ be a linear subspace on which $\beta_0$ is nondegenerate. The restriction of the form identifies $\fl$ with its dual through
\[
\check{\beta}_0|_{\fl}:\fl\longrightarrow\fl^*,\qquad
X\longmapsto\beta_0(X,\_).
\]
Combining the inverse of this identification with the canonical isomorphism $\operatorname{End}(\fl)
\cong\fl^*\otimes_{\C}\fl$ yields
\begin{equation*}
\operatorname{End}(\fl)
\cong\fl^*\otimes_{\C}\fl
\cong\fl\otimes_{\C}\fl.
\end{equation*}
We define $\omega(\fl,\beta_0)$ to be the image in $\fl\otimes_{\C}\fl$ of the identity endomorphism $\operatorname{Id}_{\fl}$ under this isomorphism.

If $\{e_1,\ldots,e_m\}$ is a basis of $\fl$ and $\{e'_1,\ldots,e'_m\}$ is its $\beta_0$-dual basis, so that $\beta_0(e_i,e'_j)=\delta_{ij}$, then
\[
\omega(\fl,\beta_0)
=\sum_{i=1}^{m}e'_i\otimes e_i
=\sum_{i,j=1}^{m}\beta_0(e'_j,e'_i)e_j\otimes e_i.
\]

\subsubsection{Casimir for  Cartan motion groups}\label{s242}
\begin{definition*}
The \textit{Casimir element} of $\fg_0$ associated with $(\fp_0,\beta_0)$ is the image
\[
\Omega_0=\Omega(\fp_0,\beta_0)\in\mathcal{U}(\fg_0)
\]
of $\omega(\fp_0,\beta_0)$ under the composite
\[
\fp_0\otimes_{\C}\fp_0
\longrightarrow T(\fp_0)
\longrightarrow T(\fg_0)
\longrightarrow\mathcal{U}(\fg_0).
\]
\end{definition*}

\begin{lemma*}
The element $\Omega(\fp_0,\beta_0)$ lies in $\mathcal{Z}(\fg_0)^K$, where $\mathcal{Z}(\fg_0)$ denotes the center of $\mathcal{U}(\fg_0)$.
\end{lemma*}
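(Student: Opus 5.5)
We need to show that $\Omega_0=\sum_i e'_i e_i$ (computed in $\mathcal{U}(\fg_0)$) is $K$-invariant and commutes with all of $\fg_0$.

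The plan is to work with the canonical, basis-free description of $\omega(\fp_0,\beta_0)$ as the image of $\operatorname{Id}_{\fp_0}$. All maps in the composite
\[
\operatorname{End}(\fp_0)\cong\fp_0^*\otimes\fp_0\cong\fp_0\otimes\fp_0\to T(\fp_0)\to T(\fg_0)\to\mathcal{U}(\fg_0)
\]
are $K$-equivariant. The identification $\fp_0^*\cong\fp_0$ is $K$-equivariant because $\beta_0$ is $K$-invariant (the Lemma of \S\ref{invform}). The inclusion $\fp_0\subseteq\fg_0$ is $K$-equivariant, and the $K$-action on $\mathcal{U}(\fg_0)$ is induced from the adjoint action on $\fg_0$. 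Since $\operatorname{Id}_{\fp_0}$ is fixed by conjugation by $K$, it follows that $\Omega_0\in\mathcal{U}(\fg_0)^K$.

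For centrality it suffices to check $[Z,\Omega_0]=0$ for $Z$ in a generating set of $\mathcal{U}(\fg_0)$, namely $Z\in\fk$ and $Z\in\fp_0$.

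For $Z=X\in\fp_0$, the commutator vanishes immediately. Indeed $\fp_0$ is abelian in $\fg_0$, so $X$ commutes with every $e_i,e'_i\in\fp_0$ inside $\mathcal{U}(\fg_0)$.

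For $Z=Y\in\fk$, I will use the same equivariance argument at the Lie algebra level. The bracket $[Y,\cdot]$ acts on $\mathcal{U}(\fg_0)$ as a derivation extending $\operatorname{ad}_0(Y)$, and the map $\fp_0\otimes\fp_0\to\mathcal{U}(\fg_0)$ is $\fk$-equivariant. The identification $\operatorname{End}(\fp_0)\cong\fp_0\otimes\fp_0$ is $\fk$-equivariant by the $\fg_0$-invariance of $\beta_0$. Under this identification $\operatorname{ad}(Y)\cdot\operatorname{Id}=[\operatorname{ad}_0(Y),\operatorname{Id}]=0$, so
\[
[Y,\Omega_0]=\sum_i\bigl([Y,e'_i]_0e_i+e'_i[Y,e_i]_0\bigr)=0.
\]
For safety I would also verify this by a direct computation in a dual basis. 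Writing $[Y,e_i]=\sum_j a_{ji}e_j$, invariance of $\beta_0$ gives $[Y,e'_i]=-\sum_j a_{ij}e'_j$, and the two sums cancel. Alternatively, $\fk$-invariance follows by differentiating $K$-invariance, provided $K$ is connected; since it need not be, the direct computation is the safer route.

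The only mild obstacle is the bookkeeping of equivariance through the identification $\fp_0^*\cong\fp_0$. This step is exactly where the $(\fg_0,K)$-invariance of $\beta_0$ is used, and it is supplied by the Lemma of \S\ref{invform}. Combining the three checks gives $\Omega_0\in\mathcal{Z}(\fg_0)^K$.
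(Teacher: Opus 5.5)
Your proposal is correct and takes essentially the paper's route: $K$-invariance comes from $K$-equivariance of the composite map applied to $\operatorname{Id}_{\fp_0}$, and centrality is handled by $\fk$-equivariance for $\fk$ and abelianness for $\fp_0$ (the paper phrases the latter as $\fg_0$-equivariance of the composite $m$). One small correction: differentiating $K$-invariance gives $\fk$-invariance with no connectedness assumption, since connectedness matters only for the converse direction, so your dual-basis check is a harmless redundancy rather than a necessity.
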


\begin{proof}
Let
\[
m:\operatorname{End}(\fp_0)\longrightarrow\mathcal{U}(\fg_0)
\]
denote the composite morphism  used in the definition of the Casimir. The $K$-invariance of $\beta_0$ makes the identification $\fp_0\simeq\fp_0^*$, and hence $m$, $K$-equivariant. Since the identity endomorphism $\operatorname{Id}_{\fp_0}$ is fixed by $K$, its image $\Omega(\fp_0,\beta_0)$ is also $K$-fixed.

It remains to prove that the Casimir is central. Differentiating $K$-equivariance shows that $m$ is $\fk$-equivariant. On the other hand, $\fp_0$ is abelian hence  its action on $\operatorname{End}(\fp_0)$ is trivial.  In addition  $m(\operatorname{End}(\fp_0))$ is contained in $\mathcal{U}(\fp_0)$. Thus $\fp_0$ acts trivially on the image of $m$ by commutators. It follows that $m$ is $\fg_0$-equivariant. Because $\operatorname{Id}_{\fp_0}$ is fixed by the induced $\fg_0$-action, we obtain
\[
[\fg_0,\Omega(\fp_0,\beta_0)]=0.
\]
Therefore $\Omega(\fp_0,\beta_0)\in\mathcal{Z}(\fg_0)^K$.
\end{proof} 

\subsubsection{Canonical elements and Casimirs in general}\label{s243} 
In Subsection \ref{s44} we shall use canonical elements and Casimirs for Lie algebras different from $\fg_0$. For that purpose we note here that  for any complex Lie algebra $\mathfrak{m}$ equipped with a symmetric bilinear form $\beta_{\mathfrak{m}}$, and any vector subspace $\mathfrak{l}_\mathfrak{m}$ of $\mathfrak{m}$ on which the restriction of $\beta_{\mathfrak{m}}$ is nondegenerate, similarly to Subsubsection \ref{CanEl}, we can define \[\omega(\mathfrak{l}_\mathfrak{m},\beta_{\mathfrak{m}})\in \mathfrak{l}_\mathfrak{m}\otimes \mathfrak{l}_\mathfrak{m}.\]
In addition, similarly to Subsubsection \ref{s242} we can define 
\[
\Omega(\mathfrak{l}_\mathfrak{m},\beta_{\mathfrak{m}})\in\mathcal{U}(\mathfrak{m}).
\]
If in addition  $\mathfrak{l}_\mathfrak{m}$ is an ideal of $\mathfrak{m}$ and the form $\beta_{\mathfrak{m}}$ is $\operatorname{ad}_{\mathfrak{m}}$-invariant on $\mathfrak{l}_\mathfrak{m}$ in the sense that 
\[\beta_{\mathfrak{m}}([x,y],z)=\beta_{\mathfrak{m}}(x,[y,z]),\quad \forall x,z\in \mathfrak{l}_\mathfrak{m}, y\in \mathfrak{m}, \]
then 
\[
\Omega(\mathfrak{l}_\mathfrak{m},\beta_{\mathfrak{m}})\in\mathcal{Z}(\mathfrak{m}).
\]
When $\mathfrak{l}_\mathfrak{m}=\mathfrak{m}$, with $\mathfrak{m}$ semisimple and $\beta_{\mathfrak{m}}$ the Killing form, $\Omega(\mathfrak{m},\beta_{\mathfrak{m}})$ is the canonical Casimir element of $\mathfrak{m}$.
\subsubsection{ Dirac operator for Cartan motion groups}\label{243}
The tensor product of the canonical maps from $\fp_0$ to $\mathcal{U}(\fg_0)$ and from $\fp_0$ to $\operatorname{Cl}(\fp_0,\beta_0)$ induces a $K$-equivariant linear embedding
\begin{equation}\label{Endembedd}
\fp_0\otimes_{\C}\fp_0
\hookrightarrow
\mathcal{U}(\fg_0)\otimes_{\C}\operatorname{Cl}(\fp_0,\beta_0)=A_0.
\end{equation}

\begin{definition*}\label{complexDiracoperator}
The \textit{algebraic Dirac operator} of $G(\R)_0$ is the image 
\[
D_0=D_{\fg_0,\beta_0}=D_{\beta_0}\in A_0
\]
of $\omega(\fp_0,\beta_0)$ under \eqref{Endembedd}.
\end{definition*}

For any pair of $\beta_0$-dual bases $\{e_i\}$ and $\{e'_i\}$ of $\fp_0$, one has
\[
D_0
=\sum_i e_i\otimes\gamma_0(e'_i)
=\sum_{i,j}\beta_0(e'_i,e'_j)e_i\otimes\gamma_0(e_j).
\]

\begin{lemma*}[The square of the Dirac operator]
In $A_0$,
\[
2D_0^2
=\Omega(\fp_0,\beta_0)\otimes1_{\operatorname{Cl}(\fp_0,\beta_0)}.
\]
\end{lemma*}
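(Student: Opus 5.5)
The plan is a direct computation in $A_0$ using a pair of $\beta_0$-dual bases, exploiting that $\fp_0$ is abelian in $\fg_0$. Fix a basis $\{e_i\}$ of $\fp_0$ with $\beta_0$-dual basis $\{e'_i\}$. Then $D_0=\sum_i e_i\otimes\gamma_0(e'_i)$, and since the two tensor factors of $A_0$ multiply componentwise,
\[
D_0^2=\sum_{i,j} e_ie_j\otimes\gamma_0(e'_i)\gamma_0(e'_j).
\]
Because $[e_i,e_j]_0=0$, the elements $e_ie_j$ and $e_je_i$ coincide in $\mathcal{U}(\fg_0)$. Thus the coefficient tensor $e_ie_j$ is symmetric in $(i,j)$, and I would symmetrize the Clifford part:
\[
D_0^2=\tfrac12\sum_{i,j} e_ie_j\otimes\bigl(\gamma_0(e'_i)\gamma_0(e'_j)+\gamma_0(e'_j)\gamma_0(e'_i)\bigr)
=\tfrac12\sum_{i,j}\beta_0(e'_i,e'_j)\,e_ie_j\otimes 1 .
\]
The second equality uses the defining relation of $\operatorname{Cl}(\fp_0,\beta_0)$ from Subsubsection~\ref{2.2.2}.

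Next I would identify $\sum_{i,j}\beta_0(e'_i,e'_j)e_ie_j$ with $\Omega(\fp_0,\beta_0)$. This follows from the dual-basis expression in Subsubsection~\ref{CanEl}: $\omega(\fp_0,\beta_0)=\sum_{i,j}\beta_0(e'_j,e'_i)e_j\otimes e_i$. Its image in $\mathcal{U}(\fg_0)$ is, by definition of $\Omega_0$ in Subsubsection~\ref{s242}, exactly $\sum_{i,j}\beta_0(e'_i,e'_j)e_ie_j$, using the symmetry of $\beta_0$. Multiplying by $2$ then gives $2D_0^2=\Omega_0\otimes 1$.

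No step is a real obstacle. Compared with the reductive case, the usual $\fk$-correction term $\Delta(\Omega_\fk)$ and the $\rho$-constant disappear precisely because $[\fp_0,\fp_0]_0=0$. The only point needing care is the symmetrization: it is legitimate because the $\mathcal{U}(\fg_0)$-coefficients commute, and the Clifford relation then collapses the anticommutator to a scalar. One should also check the factor conventions: the relation is $XY+YX=\beta_0(X,Y)$, without a factor $2$, which is what produces the $2$ on the left-hand side.
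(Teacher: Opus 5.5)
Your proof is correct and is essentially the paper's argument: expand $D_0^2$ using that $\fp_0$ is abelian in $\mathcal{U}(\fg_0)$, and then collapse the Clifford part with the relation $\gamma_0(X)\gamma_0(Y)+\gamma_0(Y)\gamma_0(X)=\beta_0(X,Y)$. The paper takes a $\beta_0$-orthonormal basis of $\fp_0^{\sigma}$ and separates diagonal from off-diagonal terms, whereas you symmetrize over an arbitrary pair of $\beta_0$-dual bases; this is only a cosmetic difference, and it avoids using the real form.
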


\begin{proof}
Because $\beta_0$ is positive definite on $\fp^{\sigma}_0$, choose a $\beta_0$-orthonormal basis $\{e_1,\ldots,e_m\}$ of that real vector space. Then
\[
D_0=\sum_{i=1}^{m}e_i\otimes\gamma_0(e_i).
\]
Using the commutativity of $\fp_0$ inside $\mathcal{U}(\fg_0)$, group the diagonal and off-diagonal terms in the square:
\begin{align*}
2D_0^2
&=2\sum_{i=1}^{m}e_i^2\otimes\gamma_0(e_i)^2\\
&\quad
+2\sum_{1\leq i<j\leq m}e_ie_j\otimes
\bigl(\gamma_0(e_i)\gamma_0(e_j)
+\gamma_0(e_j)\gamma_0(e_i)\bigr).
\end{align*}
The Clifford relation gives
\[
\gamma_0(e_i)\gamma_0(e_j)+\gamma_0(e_j)\gamma_0(e_i)
=\beta_0(e_i,e_j);
\]
in particular $\gamma_0(e_i)^2=\tfrac12$.
Orthonormality therefore eliminates the off-diagonal terms and yields
\[
2D_0^2
=\sum_{i=1}^{m}e_i^2\otimes1_{\operatorname{Cl}(\fp_0,\beta_0)}
=\Omega(\fp_0,\beta_0)\otimes1_{\operatorname{Cl}(\fp_0,\beta_0)}.
\]
\end{proof}

\subsection{Dirac cohomology for Cartan motion groups}
In this subsection we define the Dirac cohomology of a $(\fg_0,K)$ module.

\subsubsection{The spin module}\label{2.5.1}
Since $\operatorname{Cl}(\fp_0,\beta_0)=\operatorname{Cl}(\fp,\beta|_{\fp})$,
we use the same spin modules for the reductive group and for its Cartan motion group. By a spin module for $\operatorname{Cl}(\fp_0,\beta_0)$ we mean a simple module over this Clifford algebra.

Suppose first that $\dim\fp_0$ is even. Choose a polarization
\[
\fp_0=\fp_0^+\oplus\fp_0^-
\]
into dual maximal isotropic subspaces. A spin module is then realized as
\[
S_0=S_0(\fp_0,\beta_0,\fp_0^-)
=\bigwedge\fp_0^-.
\]
The Clifford action is determined on the two isotropic summands as follows:
\begin{enumerate}
\item if $X\in\fp_0^-$ and $Y\in S_0$, then
\[
\gamma_0(X)Y=X\wedge Y;
\]
\item if $X\in\fp_0^+$, then $\gamma_0(X)$ is the graded derivation of degree $-1$ characterized by
\[
\gamma_0(X)Y=\beta_0(X,Y),
\qquad Y\in\fp_0^-\subseteq\bigwedge\fp_0^-.
\]
\end{enumerate}
We denote the resulting representation by
\[
\gamma'_{\fp_0^-,\beta_0}:
\operatorname{Cl}(\fp_0,\beta_0)\longrightarrow\operatorname{End}(S_0).
\]
When $\dim\fp_0=2m+1$ is odd, choose an orthogonal decomposition
\[
\fp_0=\fp_0^+\oplus\fp_0^-\oplus \C e_0,
\]
where $\fp_0^\pm$ are dual maximal isotropic subspaces and
$\beta_0(e_0,e_0)=2$.  Fix $\varepsilon\in\{1,-1\}$ and put
\[
S_0=\bigwedge\fp_0^-.
\]
The action of $\fp_0^+\oplus\fp_0^-$ is the same as above, while
$\gamma_0(e_0)$ acts by multiplication by $\varepsilon$ on
$\bigwedge^{\mathrm{even}}\fp_0^-$ and by multiplication by
$-\varepsilon$ on $\bigwedge^{\mathrm{odd}}\fp_0^-$.  This defines
one of the two simple $\operatorname{Cl}(\fp_0,\beta_0)$-modules.  We fix this choice
for the remainder of the paper.

\subsubsection{The spin double cover}\label{dc}
The maps introduced above give $S_0$ a $\fk$-module structure through the composite
\begin{equation*}
\fk
\overset{\operatorname{ad}_0}{\longrightarrow}
\mathfrak{so}(\fp_0,\beta_0)
\overset{\varphi_{\beta_0}}{\longrightarrow}
\operatorname{Cl}(\fp_0,\beta_0)
\overset{\gamma'_{\fp_0^-,\beta_0}}{\longrightarrow}
\operatorname{End}(S_0).
\end{equation*}
This infinitesimal action integrates to the complex reductive group $\widetilde K$, obtained by complexifying the spin double cover of $K(\R)$. We refer to \cite[Sec.~3.2.1]{pandzic} and \cite[Sec.~2.0.12]{afentoulidisalmpanis2025diracoperatorsalgebraicfamilies} for the pullback construction of this cover.

Let
\[
\pi_{\widetilde K,S_0}:\widetilde K
\longrightarrow\operatorname{Aut}_{\C}(S_0)
\]
denote the resulting representation. Its differential is
\[
\gamma'_{\fp_0^-,\beta_0}\circ
\varphi_{\beta_0}\circ\operatorname{ad}_0.
\]
With the action induced from $K$, the triple consisting of $A_0$, $\widetilde K$, and the diagonal embedding $\Delta_{\beta_0}$ is again a generalized pair.

  For later use we denote the covering map $\widetilde{K}\longrightarrow K$ by $\mathcal{C}$.

\subsubsection{Dirac cohomology for Cartan motion groups}
Given a $(\mathfrak{g}_0,K)$-module $V$,  we denote the corresponding representations by 
\[\pi_{\mathcal U(\fg_0),V}:
\mathcal U(\fg_0)\longrightarrow\operatorname{End}_{\C}(V)  \quad\text{and} \quad \pi_{K,V}:K\longrightarrow\operatorname{Aut}_{\C}(V). \]
The vector space  $V\otimes_{\C}S_0$ is an $(A_0,\widetilde{K})$-module via
\begin{align*} 
\pi_{ A_0, V}&:A_0\longrightarrow \operatorname{End}_{\C}(V\otimes_{\C}S_0) \\
 \pi_{A_0,V}&(u\otimes c)
=
\pi_{\mathcal U(\fg_0),V}(u)
\otimes
\gamma'_{\fp_0^-,\beta_0}(c)
\end{align*}
for $u\in\mathcal U(\fg_0)$,
$c\in\operatorname{Cl}(\fp_0,\beta_0)$, and 
\begin{align*} 
\pi_{ \widetilde{K}, V}&: \widetilde{K}\longrightarrow \operatorname{Aut}_{\C}(V\otimes_{\C}S_0) \\
\pi_{ \widetilde{K}, V}&(k)
=\pi_{K, V}(\mathcal{C}(k)) \otimes \pi_{\widetilde K,S_0}(k),
\end{align*}
for $k\in \widetilde{K}$.

In particular the Dirac operator acts on  $V\otimes_{\C}S_0$ via a $\widetilde{K}$-equivariant linear operator that we denote by $D_0(V)=D_{\fg_0,\beta_0}(V)=D_{\beta_0}(V)$.  

\begin{definition*} Let $V$ be a $(\mathfrak{g}_0,K)$-module. The \textit{Dirac cohomology} of $V$ is the $\widetilde{K}$-module 
	\begin{equation*}
		H_{D_0}(V):=\frac{\ker D_0(V)}{\ker D_0(V)\cap \operatorname{im}\hspace{0.5mm}D_0(V)}.
	\end{equation*}
\end{definition*}

\subsection{Dirac series for Cartan motion groups}\label{DCCMG}
In this subsection we give a full classification of the Dirac series  of a Cartan motion group.

Recall that the Dirac series of a real reductive group consists of the equivalence classes of irreducible, infinitesimally unitary $(\fg,K)$-modules whose Dirac cohomology is nonzero; for this terminology see, e.g., \cite{Dong2020IMRN,dong1,dongwong}. We use the same criterion to define the Dirac series of $G(\R)_0$, with $(\fg,K)$ replaced by $(\fg_0,K)$.

\begin{theorem*}
An irreducible infinitesimally unitary $(\fg_0,K)$-module belongs to the Dirac series of $G(\R)_0$ if and only if $\fp_0$ acts trivially on it. Every such module is finite-dimensional.
\end{theorem*}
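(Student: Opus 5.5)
The plan is the standard unitarity argument built on the square formula $2D_0^2=\Omega_0\otimes 1$, followed by an analysis of how $\Omega_0$ acts on irreducible modules.

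\emph{Step 1 (if direction).} Suppose $\fp_0$ acts trivially on an irreducible $V$. Then $D_0(V)=\sum_i \pi(e_i)\otimes\gamma_0(e_i)$ acts as $0$ on $V\otimes S_0$. Hence $H_{D_0}(V)=V\otimes S_0\neq 0$, so $V$ lies in the Dirac series.

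For finite-dimensionality, observe that such a $V$ is an irreducible $(\fk,K)$-module, i.e. an irreducible algebraic representation of the reductive group $K$. Since $\fp_0$ is an ideal acting trivially, any $K$-stable subspace is $\fg_0$-stable. So $V$ is a single irreducible $K$-type and is finite-dimensional.

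\emph{Step 2 (only if direction): reduce to the Casimir.} Let $V$ be irreducible and infinitesimally unitary with positive definite invariant Hermitian form $\langle\,,\rangle$. Then the elements of $\fp^\sigma_0$ act skew-Hermitian. Equip $S_0$ with its standard positive definite Hermitian form, for which $\gamma_0(e_i)$, with $e_i$ a $\beta_0$-orthonormal basis of $\fp_0^\sigma$, acts Hermitian. It follows that $D_0(V)$ is skew-Hermitian, in particular normal, on the $K$-finite space $V\otimes S_0$.

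On each $\widetilde K$-isotypic component, which is finite-dimensional when $V$ is admissible, a skew-Hermitian operator satisfies $\ker D\cap\operatorname{im}D=0$ and $\ker D=\ker D^2$. Thus $H_{D_0}(V)=\ker D_0(V)=\ker D_0(V)^2$.

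Next, $\Omega_0\in\mathcal Z(\fg_0)^K$ acts on the irreducible $V$ by a scalar $c$; this needs a Schur lemma for admissible irreducibles, which follows from the admissibility in the classification of Champetier--Delorme and Gaudillot-Estrada. So $D_0(V)^2=\tfrac c2$, and $H_{D_0}(V)\neq0$ forces $c=0$. Unitarity gives
\[
\langle \Omega_0 v,v\rangle=\sum_i\langle e_i^2v,v\rangle=-\sum_i\|e_iv\|^2 .
\]
So $c=0$ forces $e_iv=0$ for all $i$ and all $v$. Hence $\fp_0^\sigma$, and therefore $\fp_0$, acts trivially. Positivity makes this direction work even without knowing admissibility exactly. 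Indeed, if $\ker D_0(V)\neq0$, then $D_0^2$ has a nonzero kernel vector $w$, and $\Omega_0$ acts by the scalar $c$, so $c=0$.

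\emph{Main obstacle.} The delicate points are justifying that $\Omega_0$ acts by a scalar, and that kernel and image intersect trivially, for a possibly infinite-dimensional module. I would first handle both via admissibility: irreducible infinitesimally unitary $(\fg_0,K)$-modules are admissible by the Mackey-type classification. This makes every isotypic component finite-dimensional, so the finite-dimensional linear algebra above applies componentwise. Alternatively, one can avoid Schur's lemma. The subspace $\{v: e_i v=0\ \forall i\}$ is $\fg_0$- and $K$-stable, since $[\fp_0,\fp_0]_0=0$ and $\fp_0$ is $K$-stable. Thus it is either $0$ or $V$, and a nonzero kernel vector $\sum v_k\otimes s_k$ of $\Omega_0\otimes1$ produces nonzero vectors killed by $\fp_0$ via the negativity computation.
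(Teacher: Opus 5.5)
Your argument is correct. In its final form it takes a genuinely different and considerably lighter route than the paper.

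The paper proves the ``only if'' direction through the classification of irreducible $(\fg_0,K)$-modules. It writes $V\cong V(\lambda,\tau)$ and uses the corollary (Mackey's classification plus a Nelson-type integration argument) that infinitesimal unitarity forces $\lambda$ to be real on $\fa^{\sigma}$. It then computes that $\Omega_0$ acts on $V(\lambda,\tau)$ by the scalar $-\sum_i\lambda(z_i)^2$. Applying $2D_0^2=\Omega_0\otimes1$ to a nonzero kernel vector gives $\lambda\equiv0$, hence $V\cong E^{\tau}$.

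Your alternative avoids all of this:
\begin{enumerate}
\item A nonzero $x=\sum_k v_k\otimes s_k\in\ker D_0(V)$, with the $s_k$ linearly independent, gives $\Omega_0v_k=0$ for all $k$.
\item The identity $\langle\Omega_0v,v\rangle=-\sum_i\|e_iv\|^2$ then gives $\fp_0v_k=0$.
\item Since $\fp_0$ is an abelian $K$-stable ideal, $V^{\fp_0}$ is a nonzero $(\fg_0,K)$-submodule, hence equals $V$.
\item Finite-dimensionality follows from local finiteness of the $K$-action, as you note.
\end{enumerate}
This uses only the square formula and the definition of infinitesimal unitarity. It needs neither the admissible-dual classification, nor the integration argument, nor Schur's lemma. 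Your first version, which gets Schur's lemma from admissibility via the classification, is also valid but is superseded. Combined with your skew-adjointness observation, your argument even gives $H_{D_0}(V)=V^{\fp_0}\otimes S_0$ for every infinitesimally unitary $(\fg_0,K)$-module, irreducible or not.

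What the paper's route buys in return is the explicit value of $\Omega_0$ on every irreducible $V(\lambda,\tau)$, unitary or not. That computation is exactly what the paper reuses in the real rank one theorem for non-unitary modules, where positivity is unavailable and your argument would not apply.
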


  The theorem will be proved in Section~\ref{DS} using the standard realization of irreducible $(\fg_0,K)$-modules on spaces of functions on $K$.

 \begin{remark*}
  The irreducible infinitesimally unitary $(\fg_0,K)$-modules annihilated by $\fp_0$ exhaust all finite-dimensional irreducible $(\fg_0,K)$-modules precisely when
\[
Z(G(\R)_0)\cap\fp_0^{\sigma}=\{0\},
\]
where $Z(G(\R)_0)$ denotes the center of the Cartan motion group.   
 \end{remark*}

\subsection{Vogan's conjecture for Cartan motion groups}\label{VCCMG}

For a reductive group, Vogan's conjecture, proved in
\cite{huangpandzic}, asserts that the infinitesimal character of an
irreducible infinitesimally unitary $(\fg,K)$-module with nonzero
Dirac cohomology is determined by its Dirac cohomology. 
For a module $V$ in the Dirac series of $G(\mathbb R)_0$, we prove a stronger statement: the $\widetilde K$-module $H_{D_0}(V)$ determines the isomorphism class of $V$. Moreover, using its compatible Clifford algebra action, we give an explicit reconstruction of $V$.

\subsubsection{The $\widetilde{K}$-module $H_{D_0}(V)$ determines the isomorphism class of $V$}\label{sss271}
\begin{theorem*}
Let $V$ be a $(\fg_0,K)$-module in the Dirac series of $G(\R)_0$.
Then the isomorphism class of $V$ is completely determined by its
Dirac cohomology $H_{D_0}(V)$.
\end{theorem*}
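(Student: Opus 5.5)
By the Dirac series theorem of Subsection \ref{DCCMG}, any $V$ in the Dirac series of $G(\R)_0$ is finite-dimensional and annihilated by $\fp_0$. Hence $V$ is the pullback of an irreducible $K$-module $E$ along $\fg_0\to\fk$, and the isomorphism class of $V$ is the same as that of $E$ as a $K$-module. So we must show that the $\widetilde K$-module $H_{D_0}(V)$ determines $E$.

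The first step computes $H_{D_0}(V)$ directly. Since $\fp_0$ acts by zero on $V$, the operator $D_0(V)=\sum_i \pi(e_i)\otimes\gamma'(e'_i)$ vanishes identically on $V\otimes S_0$. Therefore $\ker D_0(V)=V\otimes S_0$ and $\operatorname{im}D_0(V)=0$, which gives
\[
H_{D_0}(V)\cong E\otimes S_0
\]
as $\widetilde K$-modules, with $\widetilde K$ acting through $\mathcal C$ on $E$ and through $\pi_{\widetilde K,S_0}$ on $S_0$. It remains to show that $E$ can be recovered from the $\widetilde K$-module $E\otimes S_0$.

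I see two routes for this recovery.

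The first uses the finer structure that the paper mentions, namely the compatible Clifford action. On $H_{D_0}(V)=V\otimes S_0$, the algebra $1\otimes\operatorname{Cl}(\fp_0,\beta_0)$ acts, and this action is compatible with $\widetilde K$ through $\Delta_{\beta_0}$. Since $S_0$ is a simple Clifford module and $\operatorname{Cl}(\fp_0,\beta_0)$ is simple or a sum of two simple algebras, the multiplicity space $\operatorname{Hom}_{\operatorname{Cl}}(S_0,V\otimes S_0)\cong V$ recovers $V$ together with its $\fk$-action, as the commutant of $\operatorname{Cl}$ on which $\Delta(\fk)$ acts as $\fk\otimes1$.

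The second route uses only the $\widetilde K$-module structure, and it is the main obstacle. The tensor-product map $E\mapsto E\otimes S_0$ on representation rings need not be injective in general. I would argue via characters on a maximal torus $\widetilde T$ of $\widetilde K$: the character of $S_0$ is $\prod_{\alpha\in\Delta(\fp)^+}(e^{\alpha/2}+e^{-\alpha/2})$, or a variant in the unequal-rank case. This is a nonzero element of the integral domain $\mathbb Z[X^*(\widetilde T)]$. Hence $\operatorname{ch}(E)\operatorname{ch}(S_0)$ determines $\operatorname{ch}(E)$, and since $\widetilde K$ is reductive, characters determine modules. This argument requires $\widetilde T$ to be connected. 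Since $K$ may be disconnected, I would handle that case by restricting to $\widetilde K^\circ$ first, and then using the fact that $E\otimes S_0$ determines $E$ up to the component-group action. Failing that, I would fall back on the Clifford-action reconstruction of the first route, which is canonical and does not depend on connectedness.
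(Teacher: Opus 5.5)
Your second route is essentially the paper's proof: after the Dirac series theorem gives $H_{D_0}(V)=V\otimes_{\C}S_0$, the paper writes $\chi_{H_{D_0}(V)}=(\chi_V\circ\mathcal C)\,\chi_{S_0}$ and cancels the nonzero factor $\chi_{S_0}$ (by dividing near the identity and using analyticity of characters, where you cancel in the domain $\mathbb Z[X^*(\widetilde T)]$). Your worry about a disconnected $K$ is moot, since the standing assumption that $G(\R)$ is connected makes $K(\R)$, and hence $K$, connected, which is exactly what the paper invokes. Do not use your first route as the fallback: it uses the Clifford action on top of the $\widetilde K$-action, so it only proves the weaker reconstruction statement the paper establishes separately, whereas here $H_{D_0}(V)$ is regarded purely as a $\widetilde K$-module.
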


\begin{proof}
By Theorem~\ref{DCCMG}, the module $V$ is finite-dimensional and
$\fp_0$ acts trivially on it. Consequently, 
\[
H_{D_0}(V)=V\otimes_{\C}S_0
\]
as $\widetilde K$-modules, where $\widetilde K$ acts on $V$ through
the covering map $\mathcal C:\widetilde K\longrightarrow K$ composed with the action of $K$ on $V$.

For a finite-dimensional group representation $E$, write
$\chi_E(g)=\operatorname{tr}(g|_E)$ for its character. Taking
characters in the preceding identity gives
\[
\chi_{H_{D_0}(V)}(\widetilde k)
=
\chi_V\bigl(\mathcal C(\widetilde k)\bigr)
\chi_{S_0}(\widetilde k),
\qquad \widetilde k\in\widetilde K.
\]
Since $\chi_{S_0}(1)=\dim S_0\ne0$, there is an open neighborhood
$U$ of the identity in $\widetilde K$ on which $\chi_{S_0}$ is
nonzero. Thus
\[
\chi_V\bigl(\mathcal C(\widetilde k)\bigr)
=
\frac{\chi_{H_{D_0}(V)}(\widetilde k)}
     {\chi_{S_0}(\widetilde k)},
\qquad \widetilde k\in U.
\]
The $\widetilde K$-module $H_{D_0}(V)$ therefore determines $\chi_V$
on the open neighborhood $\mathcal C(U)$ of the identity in $K$.

Our standing assumption that $G(\R)$ is connected implies that
$K(\R)$, and hence its complexification $K$, is connected.
Characters of finite-dimensional algebraic $K$-modules are analytic,
so their values on a neighborhood of the identity determine them on
all of $K$. Since $K$ is reductive, its finite-dimensional algebraic
representations are completely reducible and are determined up to
isomorphism by their characters. Hence the underlying $K$-module of
$V$ is the unique such module whose character satisfies the displayed
quotient formula. Finally, since $\fp_0$ acts trivially on $V$, this
also determines its isomorphism class as a $(\fg_0,K)$-module.
\end{proof}

\subsubsection{Reconstruction of $V$ from the $\widetilde{K}$-equivariant  $\operatorname{Cl}(\fp_0,\beta_0)$-module $H_{D_0}(V)$} 
In the last subsubsection we showed that for a   $(\fg_0,K)$-module $V$  in the Dirac series,  the $\widetilde{K}$-module structure of $H_{D_0}(V)$  determines the isomorphism class of $V$. In this subsubsection we show that for a $(\fg_0,K)$-module $V$  in the Dirac series, its  Dirac cohomology is not just  a $\widetilde{K}$-module but also a compatible $\operatorname{Cl}(\fp_0,\beta_0)$-module. In addition, we show that this extra structure of  
$H_{D_0}(V)$ allows one  to reconstruct $V$ from $H_{D_0}(V)$.

For a $(\fg_0,K)$-module $V$ the action of  $A_0 $ on $V\otimes_{\C}S_0$ induces an action of $\operatorname{Cl}(\fp_0,\beta_0)$ on  the same space via the inclusion  
\[\operatorname{Cl}(\fp_0,\beta_0)\simeq \C\otimes_{\C} \operatorname{Cl}(\fp_0,\beta_0)\subset A_0.\]
Moreover, the Clifford and $\widetilde K$-actions satisfy
\[
\widetilde k\cdot(c\cdot w)
=(\widetilde k\cdot c)\cdot(\widetilde k\cdot w)
\]for $\widetilde k\in\widetilde K$, $c\in\operatorname{Cl}(\mathfrak p_0,\beta_0)$, and $w\in V\otimes_{\mathbb C}S_0$. Thus $V\otimes_{\mathbb C}S_0$ is a $\widetilde K$-equivariant $\operatorname{Cl}(\mathfrak p_0,\beta_0)$-module.

If $\fp_0$ acts trivially on $V$ then $H_{D_0}(V)=V\otimes_{\C}S_0$, and hence in this case the Dirac cohomology of $V$ is a $\widetilde{K}$-equivariant $\operatorname{Cl}(\fp_0,\beta_0)$-module.  

Put $\operatorname{Cl}_0:=\operatorname{Cl}(\fp_0,\beta_0)$ if $\dim\fp_0$ is even, and let $\operatorname{Cl}_0:=\operatorname{Cl}^{\bar 0}(\fp_0,\beta_0)$ be the subalgebra of even elements of $\operatorname{Cl}(\fp_0,\beta_0)$ if $\dim\fp_0$ is odd. For a $(\fg_0,K)$-module $V$, the group $\widetilde K$ acts on $\operatorname{Hom}_{\operatorname{Cl}_0}(S_0,V\otimes_{\C}S_0)$ by conjugation , and this action factors through $K$, since the nontrivial element of $\ker\mathcal C$ acts by $-1$ on both $S_0$ and $V\otimes_{\C}S_0$.

\begin{theorem*}
    Let $V$ be a $(\fg_0,K)$-module in the Dirac series of the Cartan motion group $G(\R)_0$. Then $\fp_0$ acts trivially on $V$, so that $H_{D_0}(V)=V\otimes_{\C}S_0$, and the map
\[
\eta_V:V\longrightarrow\operatorname{Hom}_{\operatorname{Cl}_0}\bigl(S_0,H_{D_0}(V)\bigr),
\qquad
\eta_V(v)(s)=v\otimes s,
\]
is an isomorphism of $K$-modules. In particular, $V$ can be constructed  from  its Dirac cohomology, considered as a $\widetilde{K}$-equivariant $\operatorname{Cl}(\fp_0,\beta_0)$-module.
\end{theorem*}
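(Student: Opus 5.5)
The plan has three steps: reduce to the case where $\fp_0$ acts trivially, identify the Clifford module $H_{D_0}(V)$, and then apply a Schur-type lemma for $\operatorname{Cl}_0$ together with a $K$-equivariance check.

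\textbf{Step 1: reduction.} By Theorem~\ref{DCCMG}, $V$ is finite-dimensional and $\fp_0$ acts trivially on it. Every term of $D_0=\sum_i e_i\otimes\gamma_0(e'_i)$ contains a factor $e_i\in\fp_0$, so $D_0(V)=0$. Hence $\ker D_0(V)=V\otimes_{\C}S_0$ and $\operatorname{im}D_0(V)=0$, which gives $H_{D_0}(V)=V\otimes_{\C}S_0$. This equality holds both as $\widetilde K$-modules and as $\operatorname{Cl}(\fp_0,\beta_0)$-modules, where $c\in\operatorname{Cl}(\fp_0,\beta_0)$ acts by $1\otimes\gamma'_{\fp_0^-,\beta_0}(c)$.

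\textbf{Step 2: $\eta_V$ is well defined and equivariant.} For fixed $v$, the map $s\mapsto v\otimes s$ commutes with the Clifford action, since $c$ acts only on the second factor. Thus $\eta_V(v)\in\operatorname{Hom}_{\operatorname{Cl}_0}(S_0,H_{D_0}(V))$, and $\eta_V$ is linear in $v$. For $K$-equivariance, take $\widetilde k\in\widetilde K$ with $\mathcal C(\widetilde k)=k$. Then
\[
(\widetilde k\cdot\eta_V(v))(s)=\widetilde k\cdot\bigl(v\otimes \widetilde k^{-1}s\bigr)=(kv)\otimes s=\eta_V(kv)(s),
\]
so $\eta_V$ intertwines the $K$-actions, using the factorization of the conjugation action through $K$ noted before the statement.

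\textbf{Step 3: $\eta_V$ is bijective.} This is the key step. I would identify $\operatorname{Hom}_{\operatorname{Cl}_0}(S_0,V\otimes S_0)$ with $V\otimes\operatorname{End}_{\operatorname{Cl}_0}(S_0)$. Because $V$ carries the trivial Clifford action, $V\otimes S_0\cong S_0^{\oplus\dim V}$ as a $\operatorname{Cl}_0$-module, and any choice of basis of $V$ realizes this decomposition. The identification then follows from additivity of $\operatorname{Hom}$, which is legitimate because $V$ is finite-dimensional. It remains to show $\operatorname{End}_{\operatorname{Cl}_0}(S_0)=\C$, and then $\eta_V$ corresponds to $v\mapsto v\otimes\operatorname{Id}$, an isomorphism.

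\emph{Even case.} If $\dim\fp_0$ is even, $\operatorname{Cl}_0=\operatorname{Cl}(\fp_0,\beta_0)$ is central simple and $S_0$ is its simple module. Schur's lemma over $\C$ gives $\operatorname{End}_{\operatorname{Cl}_0}(S_0)=\C$.

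\emph{Odd case.} This is where the choice of $\operatorname{Cl}_0$ as the even subalgebra matters. If $\dim\fp_0=2m+1$, then $\operatorname{Cl}^{\bar0}(\fp_0,\beta_0)\cong\operatorname{Cl}(\C^{2m})$ is simple of dimension $2^{2m}$. Since $\dim S_0=2^m$, the module $S_0$ restricted to $\operatorname{Cl}^{\bar0}$ is its unique simple module, so again $\operatorname{End}_{\operatorname{Cl}_0}(S_0)=\C$. By contrast, over the full odd Clifford algebra, $\operatorname{Hom}$ into $S_0$ would also be $\C$. The even subalgebra is needed, I expect, because with the full algebra the nontrivial element of $\ker\mathcal C$ would not obviously act compatibly.

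To make the odd case airtight I would check directly that $S_0$ is irreducible for $\operatorname{Cl}^{\bar0}$. The elements $\gamma_0(X)\gamma_0(e_0)$ for $X\in\fp_0^\pm$ act on $\bigwedge\fp_0^-$ as wedge or contraction operators up to the sign $\pm\varepsilon$ determined by parity. These generate all of $\operatorname{End}(S_0)$, exactly as in the even case. This gives irreducibility, and hence the Schur argument applies and completes the proof.
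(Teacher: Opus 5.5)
Your proposal is correct and follows essentially the same route as the paper. You use the Dirac series classification to get $D_0(V)=0$, identify $\operatorname{Hom}_{\operatorname{Cl}_0}(S_0,V\otimes_{\C}S_0)$ with $V\otimes_{\C}\operatorname{End}_{\operatorname{Cl}_0}(S_0)$, then combine simplicity of $S_0$ over $\operatorname{Cl}_0\cong M_{2^m}(\C)$ with Schur's lemma, and check $K$-equivariance by the same computation. Your guess about why the even subalgebra is "needed" in the odd case plays no role in the proof, and it is not the actual reason: $S_0$ is also simple over the full odd Clifford algebra, and the $\widetilde K$-compatibility holds on all of $\operatorname{Cl}(\fp_0,\beta_0)$, so the same argument would work there as well.
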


\begin{proof}
The first assertion follows from Theorem \ref{DCCMG}: $\fp_0$ acts trivially on $V$, so $D_0(V)=0$ and $H_{D_0}(V)=V\otimes_{\C}S_0$. It remains to show that $\eta_V$ is an isomorphism of $K$-modules. Let $m\in \mathbb{N}_0$ be such that $\operatorname{dim}(\fp_0)\in\{2m,2m+1\}$. By the classification of complex Clifford algebras, $\operatorname{Cl}_0$ is isomorphic to the simple matrix algebra $M_{2^m}(\C)$, see e.g., \cite[\S6]{GarlingClifford} and \cite[\S20.1]{FultonHarris}; since $\dim S_0=2^m$, it follows that $S_0$ is a simple $\operatorname{Cl}_0$-module.

Note that for $X\in \operatorname{Cl}_0\subseteq \operatorname{Cl}(\mathfrak{p}_0,\beta_0)$, $v\in V$ and $s\in S_0$, 
\[
\eta_V(v)(\gamma'_{\fp_0^-,\beta_0}(X)s)=v\otimes \left(\gamma'_{\fp_0^-,\beta_0}(X)s\right)=\pi_{ A_0, V}(1\otimes X)(v\otimes s),
\]
so indeed $\eta_V(v)\in \operatorname{Hom}_{\operatorname{Cl}_0}\bigl(S_0,V\otimes_{\C}S_0\bigr)$. Because $S_0$ is finite-dimensional, the natural map
\[
V\otimes_{\C}
\operatorname{End}_{\operatorname{Cl}_0}(S_0)
\longrightarrow
\operatorname{Hom}_{\operatorname{Cl}_0}
\bigl(S_0,V\otimes_{\C}S_0\bigr),
\qquad
v\otimes T\longmapsto\bigl(s\mapsto v\otimes T(s)\bigr),
\]
is an isomorphism. By Schur's lemma, $\operatorname{End}_{\operatorname{Cl}_0}(S_0)
=\C\,\operatorname{Id}_{S_0}$. 
Consequently,
\[
\operatorname{Hom}_{\operatorname{Cl}_0}
\bigl(S_0,V\otimes_{\C}S_0\bigr)\cong V,
\]
and  the resulting composition 
\[V \xrightarrow{\eta_V}
\operatorname{Hom}_{\operatorname{Cl}_0}
\bigl(S_0,V\otimes_{\C}S_0\bigr)\cong V,
\] is the identity map of  $V$.
Thus $\eta_V$ is an isomorphism.
It remains to show that $\eta_V$ is $K$-equivariant. 
For $v\in V$, $s\in S_0$, and $\widetilde k\in\widetilde K$, we have
\begin{align*}
\bigl(\widetilde k\cdot\eta_V(v)\bigr)(s)
&=
\pi_{\widetilde K,V}(\widetilde k)\,
\eta_V(v)\bigl(
\pi_{\widetilde K,S_0}(\widetilde k^{-1})s
\bigr)=
\pi_{\widetilde K,V}(\widetilde k)
\bigl(
v\otimes
\pi_{\widetilde K,S_0}(\widetilde k^{-1})s
\bigr)\\
&=
\pi_{K,V}\bigl(\mathcal C(\widetilde k)\bigr)v\otimes s=
\eta_V\!\left(
\pi_{K,V}\bigl(\mathcal C(\widetilde k)\bigr)v
\right)(s).
\end{align*}
So $\eta_V$ is $\widetilde{K}$-equivariant, and hence $K$-equivariant, because the action of $\widetilde K$ on $\operatorname{Hom}_{\operatorname{Cl}_0}\bigl(S_0,V\otimes_{\C}S_0\bigr)$ factors through $K$.

\end{proof}


 \begin{remark*}
The preceding proof essentially contains  a Cartan-motion-group analogue of the spinor
Morita equivalence used by Pand\v{z}i\'c and Renard for reductive
Harish-Chandra pairs; see \cite[Prop.~2.2.1]{MR2778228} for the
even-dimensional case and \cite[Prop.~3.2]{Renard2014DiracSurvey} for
Renard's formulation in both parities. 
\end{remark*}

\section{Determining  the Dirac series of Cartan motion groups}\label{DS}
In this section we prove Theorem \ref{DCCMG}  that determines the Dirac series of a Cartan motion group.  
In addition, for a Cartan motion group of a connected real reductive group of real rank one, we determine which irreducible Harish-Chandra modules (which are not necessarily infinitesimally unitary) have nonzero Dirac cohomology. 

\subsection{The admissible dual of $G(\R)_0$}\label{ad}

In this subsection we describe $\widehat{G(\R)}_0$, the admissible dual of $G(\R)_0$, that is, the set of equivalence classes of irreducible  admissible $(\mathfrak{g}_0,K)$-modules.
The unitary dual $\widehat{G(\R)}_{0,\text{u}}$ of $G(\R)_0$ can  be determined using  the Mackey machine \cite{zbMATH03536358}.  
The more complicated admissible dual $\widehat{G(\R)}_0$ was determined in \cite{delormeChampetier,MR965746} in the case of $G(\R)$ being a semisimple Lie group with a finite center. The more general case of $\widehat{G(\R)}_0$, when $G(\R)$ is a real reductive group, can be found in \cite{GaudillotEstrada2026CovariantRepresentations}.

We fix a maximal abelian subspace $\fa^{\sigma}$ of  $\mathfrak{p}^\sigma$. We denote by $\mathfrak{q}^{\sigma}$ the orthogonal complement of $\fa^{\sigma}$ in $\fp^{\sigma}$ with respect to $\beta$. The corresponding complexifications are denoted by $\fa$ and $\fq$, respectively. 

We shall identify a complex linear functional $\lambda\in \fa^*$ with a complex linear functional on $\fp$ by extending it to be zero  on $\fq$.

We denote by $K^{\lambda}(\R)$ the stabilizer of $\lambda$ in $K(\R)$. The unitary dual $\widehat{K^{\lambda}(\R)}_{\text{u}}$ of $K^{\lambda}(\R)$, that is, the set of equivalence classes of irreducible continuous representations of $K^{\lambda}(\R)$, can be identified with the algebraic dual $\widehat{K^{\lambda}}$  of the complexification $K^{\lambda}$ of $K^{\lambda}(\R)$. By the algebraic dual we mean equivalence classes of irreducible algebraic representations.
Abusing notation we shall treat $\tau\in \widehat{K^{\lambda}}$
as a single representation,  rather than an equivalence class of representations. We shall denote the carrier vector space of $\tau$ by $E^{\tau}$.

Following Afgoustidis \cite{MR4400734}, we shall call  a pair $(\lambda,\tau)\in \fa^*\times \widehat{K^{\lambda}}$ a Mackey datum (for $G(\R)_0$). For such a Mackey datum   we let $V^\infty(\lambda,\tau)$ be the space of smooth maps
\begin{equation*}
	f:K(\R)\rightarrow E^\tau
\end{equation*}
satisfying  
\begin{equation*}
	f(kx)=\tau(x)^{-1}f(k), \quad \forall k\in K(\R), \forall x\in K^\lambda(\R). 
\end{equation*}
The space $V^\infty(\lambda,\tau)$ equipped with the $G(\R)_0$-action 
\begin{equation}\label{theactiononV}
	[(X,k)f](k')\coloneqq e^{i\lambda(k'^{-1}X)}f(k^{-1}k'),\quad \forall (X,k)  \in  \mathfrak{p}_0^\sigma\rtimes K(\R), \forall k'\in K(\R)
\end{equation}
is a representation of $G(\R)_0$. Here, and in what follows, $kX$ stands for $\operatorname{Ad}(k)X$, for $k\in K(\R)$ and $X\in\fp$. We denote by  $V(\lambda,\tau)$  the space of $K$-finite vectors of $V^\infty(\lambda,\tau)$. Then $V(\lambda,\tau)$ is a $(\mathfrak{g}_0,K)$-module. 
We define the Weyl group $W$ of $G(\R)_0$ to be the quotient of the normalizer of $\fa^{\sigma}$ in $K(\R)$ by the centralizer of $\fa^{\sigma}$ in $K(\R)$.

The following combines Thm.~A of \cite{delormeChampetier}
with the discussion in Secs.~3.2--3.4, especially Thm.~15, of
\cite{GaudillotEstrada2026CovariantRepresentations}.

\begin{theorem*}\label{delorme} \quad\\
	\vspace{-5mm}
	\begin{enumerate}
		\item For every $\lambda\in\mathfrak{a}^*$ and every irreducible representation $\tau$ of $K^\lambda$, the $(\mathfrak{g}_0,K)$-module $V(\lambda,\tau)$ is irreducible;
		\item Every irreducible $(\mathfrak{g}_0,K)$-module is isomorphic to $V(\lambda,\tau)$ for some $\lambda\in\mathfrak{a}^*$ and some irreducible representation $\tau$ of $K^\lambda$;
		\item The $(\mathfrak{g}_0,K)$-modules $V(\lambda_1,\tau_1)$ and $V(\lambda_2,\tau_2)$ are equivalent if and only if there exists $w\in W$ such that $\lambda_2=w\lambda_1$ and $\tau_2\cong w\tau_1$.
	\end{enumerate}
\end{theorem*}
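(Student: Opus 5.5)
The approach is to treat $V(\lambda,\tau)$ algebraically as the module of $K$-finite sections of a $K$-equivariant vector bundle on the orbit $K\cdot\lambda\subseteq\fp^*$. Everything then reduces to the standard equivalence between $K$-equivariant quasi-coherent modules on the affine homogeneous space $K/K^\lambda$ and representations of $K^\lambda$.

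\emph{The basic dictionary.} Formula \eqref{theactiononV} shows that $X\in\fp_0$ acts on $V(\lambda,\tau)$ by multiplication by the function $k'\mapsto i\lambda(k'^{-1}X)$. Hence $\mathcal U(\fp_0)=S(\fp_0)$ acts through restriction of polynomial functions on $\fp^*$ to the orbit $\mathcal O_\lambda=K\cdot\lambda$. Since $\lambda\in\fa^*$ is semisimple, $\mathcal O_\lambda$ is closed, $K^\lambda$ is reductive, and $\mathcal O_\lambda\cong K/K^\lambda$ is affine. Consequently $S(\fp_0)\to\C[\mathcal O_\lambda]$ is surjective. By algebraic Frobenius reciprocity, the $K$-finite vectors of $V^\infty(\lambda,\tau)$ are exactly the algebraic sections $(\C[K]\otimes E^\tau)^{K^\lambda}$. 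So $V(\lambda,\tau)$ is the $K$-equivariant $\C[\mathcal O_\lambda]$-module corresponding, under the equivalence $M\mapsto M/\mathfrak m_\lambda M$, to the $K^\lambda$-module $E^\tau$. Here $\mathfrak m_\lambda$ is the maximal ideal of $\lambda$.

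\emph{Part (1).} A $(\fg_0,K)$-submodule $W\subseteq V(\lambda,\tau)$ is precisely a $K$-stable $\C[\mathcal O_\lambda]$-submodule. Under the equivalence it corresponds to a $K^\lambda$-submodule of the fiber $E^\tau$. Irreducibility of $\tau$ forces $W=0$ or $W=V(\lambda,\tau)$.

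\emph{Part (3).} The isomorphism class of $V(\lambda,\tau)$ determines its support $\mathcal O_\lambda$, which is the zero set of its annihilator in $S(\fp_0)$. By the Kostant--Rallis/Chevalley restriction theorem, two elements of $\fa^*$ lie in the same $K$-orbit if and only if they are $W$-conjugate. Once $\lambda_2=w\lambda_1$ with $w$ represented by $n\in N_K(\fa)$, the fiber at $\lambda_2$ is the $n$-twist of the fiber at $\lambda_1$, so $\tau_2\cong w\tau_1$. The converse is the explicit intertwiner $f\mapsto f(\,\cdot\,n)$ composed with the identification $E^{w\tau_1}=E^{\tau_1}$.

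\emph{Part (2), the main obstacle.} Let $V$ be an irreducible $(\fg_0,K)$-module, that is, an irreducible $K$-equivariant $S(\fp_0)$-module. First, $S(\fp_0)^K$ acts by scalars, by Dixmier's lemma (using countable dimension), which pins $V$ to one fiber of $\fp^*\to\fp^*/\!/K$. That fiber contains a unique closed orbit, which meets $\fa^*$ in some $\lambda$ by Kostant--Rallis. The hard part is to show that the annihilator of $V$ is exactly the ideal $I_{\mathcal O_\lambda}$, ruling out support on non-closed orbits in the fiber and nilpotent thickenings. My first attempt is to choose a $K$-stable minimal prime $P$ of the support of a nonzero finitely generated piece. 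Then $\operatorname{Hom}_{S(\fp_0)}(S(\fp_0)/P,V)\ne0$, and irreducibility gives $PV=0$. I would then try to show that $P$ must be the ideal of a closed orbit, using admissibility. The reason is that for a non-closed orbit $\mathcal O$, the sections of equivariant bundles on $\overline{\mathcal O}$ acquire infinite $K$-multiplicities via the invariants of the closure. This is exactly where Delorme--Champetier and Gaudillot-Estrada do the real work, and I would follow their argument there. Once $V$ is a $K$-equivariant $\C[\mathcal O_\lambda]$-module, the equivalence gives $V\cong V(\lambda,\tau)$ with $E^\tau=V/\mathfrak m_\lambda V$, and $\tau$ is irreducible because $V$ is.
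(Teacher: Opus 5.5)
\emph{Gap.} Your arguments for parts (1) and (3) are sound, and they give a self-contained geometric route. The paper offers no proof here; it simply quotes Thm.~A of Champetier--Delorme and Thm.~15 of Gaudillot-Estrada. (One cosmetic point: since $X\in\fp_0$ acts by $k\mapsto i\lambda(k^{-1}X)$, the support is really the orbit $K\cdot i\lambda$, not $K\cdot\lambda$. Stabilizers and $W$-conjugacy are unaffected.)

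The genuine gap is in part (2), exactly at the step you flag. You never show that an irreducible $V$ is annihilated by the ideal of a closed orbit, and the mechanism you sketch cannot do it.
\begin{enumerate}
\item Admissibility is not a hypothesis of (2), which concerns all irreducible $(\fg_0,K)$-modules.
\item More importantly, once Dixmier places $V$ in a single fibre $F$ of $\fp^*\to\fp^*/\!/K$, no multiplicity count can separate closed from non-closed orbits. Every orbit closure in $F$ has only constant $K$-invariants. By Kostant--Rallis, the coordinate ring of $F$, and hence every finitely generated equivariant module over it, has finite $K$-multiplicities.
\item A concrete case: for $SL_2(\R)$, a non-closed nilpotent orbit is a punctured line. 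Its coordinate ring $\C[a,a^{-1}]$ is multiplicity-free. What excludes it is that it is reducible, with submodules $a^n\C[a]$.
\end{enumerate}

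\emph{Fix.} The missing idea is Nakayama's lemma. With it the step is short, and neither Dixmier nor admissibility is needed.
\begin{enumerate}
\item Let $E\subseteq V$ be a nonzero finite-dimensional $K$-stable subspace. Then $S(\fp_0)E$ is a nonzero $(\fg_0,K)$-submodule, so $V=S(\fp_0)E$ is finitely generated over $S(\fp_0)$.
\item Its support $Z$, the zero set of $\operatorname{Ann}V$, is nonempty, closed and $K$-stable. So $Z$ contains a closed $K$-orbit $\mathcal O$ (take one of minimal dimension). Let $J$ be the ideal of $\mathcal O$.
\item Since $J$ is $K$-stable, $JV$ is a $(\fg_0,K)$-submodule, so $JV=0$ or $JV=V$.
\item If $JV=V$, Nakayama gives $j\in J$ with $(1+j)V=0$. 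Then $1+j$ vanishes on $Z\supseteq\mathcal O$, which is absurd because $j$ vanishes on $\mathcal O$. Hence $JV=0$, and $V$ is a $K$-equivariant $\C[\mathcal O]$-module.
\item Closed $K$-orbits in $\fp^*\cong\fp$ are orbits of semisimple elements, and Kostant--Rallis conjugates these into $\fa$. So $\mathcal O=K\cdot i\lambda$ for some $\lambda\in\fa^*$.
\item Your equivalence of categories then gives $V\cong V(\lambda,\tau)$ with $\tau=V/\mathfrak m_{i\lambda}V$, which is irreducible because $V$ is.
\end{enumerate}
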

Combining the last Theorem with the Mackey machine we   obtain the following corollary. 

\begin{corollary*}
    The irreducible  $(\mathfrak{g}_0,K)$-module $V(\lambda,\tau)$ is infinitesimally unitary if and only if $\lambda(\fa^{\sigma})\subseteq \R$.
\end{corollary*}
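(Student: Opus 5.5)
The plan is to combine the classification theorem just stated with the Mackey-machine description of the unitary dual of $G(\R)_0=\fp^\sigma\rtimes K(\R)$. By the classification, every irreducible $(\fg_0,K)$-module is some $V(\lambda,\tau)$, so we must show two things: $V(\lambda,\tau)$ is infinitesimally unitary exactly when $\lambda$ is real on $\fa^\sigma$, and the unitarizable ones are precisely the $K$-finite parts of the irreducible unitary representations.

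\emph{Sufficiency.} Assume $\lambda(\fa^\sigma)\subseteq\R$. Then $X\mapsto e^{i\lambda(k'^{-1}X)}$ is a unitary character of $\fp^\sigma$. Put the $L^2$ inner product
\[
\langle f_1,f_2\rangle=\int_{K(\R)}\langle f_1(k),f_2(k)\rangle_{E^\tau}\,dk
\]
on $V^\infty(\lambda,\tau)$, using a $\tau$-invariant inner product on $E^\tau$. The $K(\R)$-action by left translation preserves this form. Each $X\in\fp^\sigma$ acts by multiplication by a function of modulus one, so it is also isometric. Differentiating, $\fk^\sigma$ and $\fp^\sigma$ act by skew-Hermitian operators on the $K$-finite vectors. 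Here $\fp^\sigma$ acts by multiplication by $i\lambda(k'^{-1}X)$, which is purely imaginary. Hence $V(\lambda,\tau)$ is infinitesimally unitary.

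\emph{Necessity.} Suppose $V(\lambda,\tau)$ carries an invariant positive definite Hermitian form. Let $X\in\fp^\sigma$; it acts on $V$ by a skew-Hermitian operator $T_X$. In the function realization, $T_X$ is multiplication by $m_X(k')=i\lambda(k'^{-1}X)$, a polynomial function on $K$. Each $T_X$ preserves every $K$-stable finite-dimensional subspace only up to finitely many $K$-types, so we use a simpler spectral argument instead. The operators $T_X$ commute, and since $\fp^\sigma$ is abelian, $\mathcal U(\fp_0)=S(\fp_0)$ acts. Consider the $K$-invariant Casimir $\Omega_0$. It acts on $V(\lambda,\tau)$ by multiplication by
\[
\sum_i\bigl(i\lambda(k'^{-1}e_i)\bigr)^2=-\beta_0^*(\lambda,\lambda),
\]
a scalar by $K$-invariance of $\beta_0$. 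On a unitary module, $\Omega_0=\sum_i T_{e_i}^2$ with $e_i$ real orthonormal is a negative semidefinite operator. It follows that $\beta^*(\lambda,\lambda)\ge 0$.

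This constraint alone does not force reality, since $\lambda=\mu+i\nu$ only gives $|\mu|^2-|\nu|^2\ge0$. To conclude, we use the other $K$-invariant elements of $S(\fp_0)^K$ (Chevalley restriction to $S(\fa)^W$). They act by scalars $p(i\lambda)$, and $p(T)$ must be a normal operator whose spectrum consists of values $p(i\xi)$ with $\xi$ real, because the $T_X$ are commuting skew-adjoint operators with joint spectrum in $i\fp^{\sigma*}$. More directly, for fixed $X$ the function $m_X$ takes the value $i\lambda(k'^{-1}X)$. If $\operatorname{Im}\lambda\ne0$, choose $X$ and $k'$ with $\lambda(k'^{-1}X)\notin\R$. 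Localizing vectors near $k'$ gives approximate eigenvectors of $T_X$ with a non-imaginary approximate eigenvalue. This contradicts skew-adjointness: $\|(T_X-z)v\|\ge|\operatorname{Re} z|\,\|v\|$ under the invariant norm.

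\emph{Main obstacle.} The difficulty is that the invariant form on $V$ is abstract, not the $L^2$ form, so this localization does not transfer directly. I would resolve it with the Mackey machine, as the paper intends. An infinitesimally unitary irreducible admissible module integrates to an irreducible unitary representation of $G(\R)_0$, since it is the $K$-finite part of a unitary Hilbert completion. By Mackey theory, that representation is induced from a unitary character $e^{i\lambda}$ of $\fp^\sigma$ with $\lambda$ real, up to $K$-conjugacy into $\fa^{\sigma*}$, tensored with $\tau$. Its $K$-finite part is $V(\lambda',\tau')$ with $\lambda'$ real. Part 3 of the classification theorem then gives $\lambda=w\lambda'$, and since $W$ preserves $\fa^{\sigma}$, $\lambda$ is real on $\fa^\sigma$.

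The delicate point is justifying this integration step for the abelian-by-compact group. I would handle it by noting that the $T_X$ are essentially skew-adjoint on $K$-finite vectors, since they are analytic vectors by admissibility.
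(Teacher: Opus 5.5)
Your final argument is essentially the paper's proof: sufficiency comes from the $L^2$ model of the unitarily induced representation. Necessity comes from integrating $V(\lambda,\tau)$ to an irreducible unitary representation, applying Mackey's classification to get $V(\lambda',\tau')$ with $\lambda'$ real, and concluding via part~(3) of the classification theorem and the $W$-stability of $\fa^\sigma$. On the step you flag, essential skew-adjointness of each $T_X$ separately does not integrate a Lie algebra representation. The paper uses Nelson's theorem: the Laplacian $\sum_j Y_j^2+\Omega_0$, with $\{Y_j\}$ a $(-\beta)$-orthonormal basis of $\fk^\sigma$, preserves the finite-dimensional $K$-isotypic subspaces and is therefore essentially self-adjoint. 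This gives a representation of the universal cover, which descends to $G(\R)_0$ because $K(\R)\hookrightarrow G(\R)_0$ is an isomorphism on $\pi_1$.
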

\begin{proof}
If $\lambda$ is real-valued on $\fa^\sigma$, then
$V(\lambda,\tau)$ is the space of $K$-finite vectors of the
unitarily induced representation
\[
\operatorname{Ind}_{\fp_0^\sigma\rtimes K^\lambda(\R)}^{G(\R)_0}
\bigl(e^{i\lambda}\otimes\tau\bigr),
\]
and is therefore infinitesimally unitary.

Conversely, suppose that $V(\lambda,\tau)$ is infinitesimally
unitary. By the integration argument in
\cite[Sec.~4.4]{MR4123111}, based on Nelson's theorem
\cite[Theorem~5]{Nelson59}, the Lie algebra action integrates
to an irreducible unitary representation of the universal
covering group of $G(\R)_0$ on the Hilbert space completion
of $V(\lambda,\tau)$.
Since the $\fk^\sigma$-action integrates to the given
$K(\R)$-action, and the inclusion
$K(\R)\hookrightarrow G(\R)_0$ induces an isomorphism
of fundamental groups, this representation descends to
$G(\R)_0$.

By Mackey's classification \cite{zbMATH03536358}, the underlying
$(\fg_0,K)$-module is isomorphic to $V(\lambda',\tau')$ for some
$\lambda'$ that is real-valued on $\fa^\sigma$.
Part~(3) of the preceding theorem implies that
$\lambda=w\lambda'$ for some $w\in W$.
Since $W$ preserves $\fa^\sigma$, the functional $\lambda$
is also real-valued on $\fa^\sigma$.
\end{proof}

 \subsection{The Dirac series of Cartan motion groups}
 In this subsection we prove Theorem \ref{DCCMG}.

\begin{proof}
If $V$ is an infinitesimally unitary irreducible $(\fg_0,K)$-module on which $\fp_0$
acts trivially, then clearly $H_{D_0}(V)=V\otimes_{\C}S_0$ and in particular $V$ belongs to the Dirac series.

On the other hand, if $V$ belongs to the Dirac series of $G(\R)_0$, then by Theorem \ref{ad} there exists a Mackey datum $(\lambda,\tau)\in\mathfrak{a}^*\times \widehat{K^{\lambda}}$ with $V\cong V(\lambda,\tau)$, and by Corollary \ref{ad}, $\lambda$ is real-valued on $\mathfrak{a}^{\sigma}$, hence on $\fp_0^{\sigma}$. We now show that $\lambda\equiv 0$.

For $X\in \mathfrak{p}$ and $f\in V(\lambda,\tau)$, differentiating \eqref{theactiononV} gives
\[
(X f)(k)=i\lambda(k^{-1}X)f(k), \quad \forall k\in K(\R).
\]
Hence, if $\{z_{i}\}$ is a $\beta_0$-orthonormal basis of $\mathfrak{p}_0$,
\[
(\Omega_0f)(k)=\sum_i (z_{i}^2f)(k)=-\sum_i \lambda(k^{-1}z_{i})^2f(k).
\]
Since $\beta_0$ is $K$-invariant, $\{k^{-1}z_{i}\}$ is again a $\beta_0$-orthonormal basis of $\mathfrak{p}_0$, and therefore
\[
\Omega_0 f=-\sum_i\lambda(z_{i})^2f,\qquad \forall f\in V(\lambda,\tau).
\]
Let $x$ be a nonzero vector in $\ker D_0(V)\setminus \operatorname{im}D_0(V)$; such an $x$ exists since $H_{D_0}(V)\neq 0$. By Lemma \ref{243},
\[
0=2D_0^2(x)=\pi_{A_0,V}\bigl(\Omega_0\otimes1_{\operatorname{Cl}(\mathfrak{p}_0,\beta_0)}\bigr)(x)=-\sum_i \lambda(z_{i})^2 x,
\]
so $\sum_i\lambda(z_i)^2=0$. Choosing $\{z_i\}\subseteq\fp_0^{\sigma}$, all $\lambda(z_i)$ are real; hence $\lambda(z_i)=0$ for every $i$ and $\lambda\equiv 0$.

Finally, since $\lambda\equiv0$ we have $K^\lambda=K$, and evaluation at the identity gives an isomorphism $V(0,\tau)\cong E^\tau$ under which $\fp_0$ acts trivially. In particular $V$ is finite-dimensional.
\end{proof}

\subsection{Dirac cohomology beyond the unitary dual in real rank one}\label{33}
Recall that a real reductive group $G(\R)$ is said to be of \textit{real rank one} if the dimension of a maximal abelian subspace $\fa^{\sigma}$ of $\fp^{\sigma}$ is one. 
\begin{theorem*}
Let $G(\R)$ be a connected real reductive group of  real rank one, and let $V$ be an irreducible $(\fg_0,K)$-module. Then $V$ has
nonzero Dirac cohomology if and only if $\fp_0$ acts trivially on it. In this case, $V$ is finite-dimensional. 
\end{theorem*}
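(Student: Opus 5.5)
We argue as in the proof of Theorem \ref{DCCMG}, but replace the unitarity input (reality of $\lambda$) with the real rank one hypothesis. Suppose first that $\fp_0$ acts trivially on $V$. Then $D_0(V)=0$, so $H_{D_0}(V)=V\otimes_{\C}S_0\neq 0$. Since $V$ is irreducible, $V\cong V(0,\tau)\cong E^\tau$ for some $\tau\in\widehat{K}$, and in particular $V$ is finite-dimensional.

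Conversely, suppose $H_{D_0}(V)\neq0$. By Theorem \ref{ad}, $V\cong V(\lambda,\tau)$ for some Mackey datum $(\lambda,\tau)$, where now $\lambda\in\fa^*$ is an arbitrary complex functional.
\begin{itemize}
\item Fix a $\beta_0$-orthonormal basis $\{z_i\}$ of $\fp_0^\sigma$. Differentiating \eqref{theactiononV} and using the $K$-invariance of $\beta_0$, exactly as in the proof of Theorem \ref{DCCMG}, shows that $\Omega_0$ acts on $V(\lambda,\tau)$ by the scalar $c_\lambda=-\sum_i\lambda(z_i)^2$. This computation did not use unitarity.
\item By Lemma \ref{243}, $D_0(V)^2=\tfrac12 c_\lambda\operatorname{Id}$ on $V\otimes_{\C}S_0$.
\item If $c_\lambda\neq0$, then $D_0(V)$ is invertible, so $\ker D_0(V)=0$ and $H_{D_0}(V)=0$. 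This is a contradiction, hence $c_\lambda=0$.
\end{itemize}

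The only real obstacle is that, for complex $\lambda$, the condition $\sum_i\lambda(z_i)^2=0$ does not force $\lambda=0$ in general, since $\lambda$ could be a nonzero isotropic vector. This is exactly where real rank one enters. Choose the orthonormal basis adapted to $\fp^\sigma=\fa^\sigma\oplus\fq^\sigma$: take $z_1$ a unit vector spanning the line $\fa^\sigma$, and $z_2,\dots,z_m\in\fq^\sigma$. This is possible because $\fq^\sigma$ is the $\beta$-orthogonal complement of $\fa^\sigma$ and $\beta$ is positive definite on $\fp^\sigma$. Since $\lambda$ is extended by zero on $\fq$, we get $c_\lambda=-\lambda(z_1)^2$. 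Therefore $c_\lambda=0$ forces $\lambda(z_1)=0$, and so $\lambda\equiv0$ because $\fa$ is one-dimensional.

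Finally, $\lambda=0$ gives $K^\lambda=K$. Evaluation at the identity then identifies $V(0,\tau)$ with $E^\tau$, and under this identification $\fp_0$ acts trivially, since $(Xf)(k)=i\lambda(k^{-1}X)f(k)=0$. In particular $V$ is finite-dimensional, which completes the proof.
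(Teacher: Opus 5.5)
Your proof is correct and is essentially the paper's argument: $\Omega_0$ acts on $V(\lambda,\tau)$ by the scalar $-\sum_i\lambda(z_i)^2$, the square formula $2D_0^2=\Omega_0\otimes 1$ forces this scalar to vanish when $H_{D_0}(V)\neq0$, and real rank one lets you pick a $\beta_0$-orthonormal basis in which only one term survives. The paper packages that last step as a lemma for any $\lambda$ with $\dim_{\R}\lambda(\fp_0^{\sigma})\le 1$, adapting the basis to $\ker(\lambda|_{\fp_0^{\sigma}})$; you adapt it directly to $\fp^{\sigma}=\fa^{\sigma}\oplus\fq^{\sigma}$, which in rank one amounts to the same thing.
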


In particular, in the real rank one case there are no irreducible $(\fg_0,K)$-modules with nonzero Dirac cohomology that are not infinitesimally unitary.

Before proving the theorem, we establish a technical lemma.

\begin{lemma*}
 Let $(\lambda,\tau)$ be a Mackey datum for a Cartan motion group $G(\R)_0$ of a connected real reductive group $G(\R)$. Assume that the $(\fg_0,K)$-module $V(\lambda,\tau)$ has nonzero Dirac cohomology and that $\operatorname{dim}_{\R}(\lambda(\fp_0^{\sigma}))\leq 1$. Then $\lambda\equiv 0$.
\end{lemma*}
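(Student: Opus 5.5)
The plan is to run the same Casimir argument as in the proof of Theorem \ref{DCCMG}, replacing the reality assumption on $\lambda$ by the weaker hypothesis $\dim_{\R}\lambda(\fp_0^{\sigma})\leq 1$.

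First, as computed in that proof, for every $f\in V(\lambda,\tau)$ we have
\[
\Omega_0 f=-\sum_i\lambda(z_i)^2 f,
\]
where $\{z_i\}$ is any $\beta_0$-orthonormal basis of $\fp_0$. The computation uses only the $K$-invariance of $\beta_0$ and the differentiated action \eqref{theactiononV}, so no unitarity is needed. Thus $\Omega_0$ acts on $V(\lambda,\tau)$ by the scalar $c_\lambda:=-\sum_i\lambda(z_i)^2$.

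Second, I show that nonzero Dirac cohomology forces $c_\lambda=0$. By Lemma \ref{243}, $2D_0(V)^2=c_\lambda\cdot\operatorname{Id}$ on $V\otimes_{\C}S_0$. If $c_\lambda\neq0$, then $D_0(V)$ is invertible, so $\ker D_0(V)=0$ and $H_{D_0}(V)=0$. This contradicts the hypothesis, so $\sum_i\lambda(z_i)^2=0$.

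Third, I use the rank hypothesis, choosing the orthonormal basis $\{z_i\}$ inside the real form $\fp_0^{\sigma}$.
\begin{itemize}
\item If $\lambda(\fp_0^{\sigma})=0$, then $\lambda$ vanishes on a real form of $\fp_0$, hence $\lambda\equiv0$ by complex linearity.
\item Otherwise $\lambda(\fp_0^{\sigma})=\R c$ for some nonzero $c\in\C$. Then $\mu:=c^{-1}\lambda$ is real-valued on $\fp_0^{\sigma}$, and
\[
0=\sum_i\lambda(z_i)^2=c^2\sum_i\mu(z_i)^2 .
\]
Since $c\neq0$, this gives $\sum_i\mu(z_i)^2=0$. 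Each $\mu(z_i)$ is real, so $\mu(z_i)=0$ for all $i$. Hence $\mu\equiv0$ and $\lambda\equiv0$, contradicting $\lambda(\fp_0^{\sigma})=\R c\neq0$.
\end{itemize}
In either case $\lambda\equiv0$.

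There is no real obstacle here. The only point that needs care is the second step: the module is not assumed unitary, so I must check that the scalar action of $\Omega_0$ still kills all of $\ker D_0(V)$. The square formula handles this directly, because a nonzero scalar square makes $D_0(V)$ injective.
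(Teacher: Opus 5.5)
Your proof is correct and follows the paper's argument: $\Omega_0$ acts on $V(\lambda,\tau)$ by the scalar $-\sum_i\lambda(z_i)^2$, the square formula forces this scalar to vanish once $\ker D_0(V)\neq 0$, and the rank hypothesis then forces $\lambda\equiv 0$. The only difference is cosmetic and lies in the last step: the paper picks a $\beta_0$-orthonormal basis of $\fp_0^{\sigma}$ adapted to $\ker(\lambda|_{\fp_0^{\sigma}})$, so that the sum reduces to $\lambda(e_1)^2$, whereas you rescale $\lambda$ by $c^{-1}$ to reduce to the real-valued case already handled in the proof of Theorem \ref{DCCMG}.
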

\begin{proof}
We set $m:=\operatorname{dim}_{\R}(\fp_0^{\sigma})$.  Assume, by contradiction, that $\lambda\not\equiv 0$. Since $\fp_0^{\sigma}$ spans $\fp_0$ over $\C$, the restriction $\lambda|_{\fp_0^{\sigma}}$ is nonzero, so $\operatorname{dim}_{\R}(\lambda(\fp_0^{\sigma}))\geq 1$ and hence, by assumption, $\operatorname{dim}_{\R}(\lambda(\fp_0^{\sigma}))= 1$. Then the kernel of the $\R$-linear map 
\[\lambda|_{\fp_0^{\sigma}}:\fp_0^{\sigma}\longrightarrow \C\]
is of dimension $m-1$ and the orthogonal complement $\left(\operatorname{ker}(\lambda|_{\fp_0^{\sigma}})\right)^{\perp}$ inside $\fp_0^{\sigma}$ is one-dimensional.  Hence we can find a $\beta_0$-orthonormal basis $\{e_1,...,e_{m}\}$ such that $\{e_1\}$ is a basis for $\left(\operatorname{ker}(\lambda|_{\fp_0^{\sigma}})\right)^{\perp}$ and $\{e_2,...,e_{m}\}$ a basis for $\operatorname{ker}(\lambda|_{\fp_0^{\sigma}})$.
As before, $\Omega_0=\sum_{i=1}^me_i^2$ and, as in the proof of Theorem \ref{DCCMG}, we can show that \begin{equation*}
	\sum_{i=1}^m \lambda(e_{i})^2=0.
\end{equation*}
Since $\{e_2,...,e_{m}\}$ is a basis for $\operatorname{ker}(\lambda|_{\fp_0^{\sigma}})$ we obtain 
\begin{equation*}
\lambda(e_{1})^2=0.
\end{equation*}
Hence $\lambda(e_{1})=0$, and therefore $\lambda\equiv 0$, which is a contradiction. 
\end{proof}

\begin{proof}[Proof of Theorem \ref{33}]
If $\fp_0$ acts trivially on $V$, then $D_0(V)=0$ and hence
$H_{D_0}(V)=V\otimes_{\C}S_0\neq0$.

Conversely, suppose that $H_{D_0}(V)\neq 0$. By Theorem \ref{ad} we may assume that $V=V(\lambda,\tau)$ for some Mackey datum $(\lambda,\tau)$. Since $\lambda$ vanishes on $\fq$,
\[
\operatorname{dim}_{\R}(\lambda(\fp_0^{\sigma}))=\operatorname{dim}_{\R}(\lambda(\fa^{\sigma}))\leq \operatorname{dim}_{\R}(\fa^{\sigma})=1,
\]
so the preceding lemma gives $\lambda\equiv 0$. As in the proof of Theorem \ref{DCCMG}, this means that $\fp_0$ acts trivially on $V\cong E^{\tau}$, which is finite-dimensional.
\end{proof}

\section{The Cartan motion group as a special fiber   }\label{thedeformationfamily}

In this section   we promote the idea that a Cartan motion group   of a real reductive group  should be thought of as a special fiber of a family. This approach has been used substantially before, especially in the context of the Mackey--Higson bijection; see e.g. \cite{MR409726,MR2391803,MR2815133,MR4079418,MR4400734,eyallietheory,MR4542720}. 

Here we shall work with the deformation family of Harish-Chandra pairs associated with a real reductive group $G(\R)$. We will  explain how the Dirac theory for the Cartan motion group $G(\R)_0$, which  was developed in Sections  \ref{DiracTheory} and \ref{DS} intrinsically within $G(\R)_0$, can also be obtained from a special fiber of a family. 

\subsection{The deformation family}
In this subsection we recall the deformation family of Harish-Chandra pairs that is associated with  a real reductive group. For further details about algebraic families   of Harish-Chandra pairs and their modules see \cite{eyalbern,MR4123111}.
In this paper we shall only consider families in which the base variety is the affine line $\mathbb{A}^1_{\C}$. In this case one can replace a family by its space of global sections. This will be the approach we shall take throughout the paper without further notice. 

\subsubsection{Families of Lie algebras}
An \textit{algebraic family of complex Lie algebras $\boldsymbol{\mathfrak{g}}$ over $\mathbb{A}^1_{\C}$} 
is a Lie algebra over $\C[z]$ (the polynomial ring here is considered as the space of global sections of the sheaf of regular functions of  $\mathbb{A}^1_{\C}$) that is free as a $\C[z]$-module.  Since for us the base variety is always $\mathbb{A}^1_{\C}$, we shall occasionally omit it and simply say an algebraic family of complex Lie algebras.

For every $\alpha\in \C$, the fiber of 
$\boldsymbol{\mathfrak{g}}$ over $\alpha$ is the complex Lie algebra
\[\boldsymbol{\mathfrak{g}}|_{\alpha}:=\boldsymbol{\mathfrak{g}}/(I_{\alpha}\boldsymbol{\mathfrak{g}}), \]
where $I_{\alpha}$ is the maximal ideal of $\C[z]$ generated by $z-\alpha$.  

\begin{remark*}
    Recall that the fiber $\boldsymbol{\mathfrak{g}}|_{\alpha}$ is canonically isomorphic to the quotient of $\g_{\alpha}$,  the stalk of $\g$ at $\alpha$,  by $I_{\alpha}\g_{\alpha}$.
\end{remark*}
\begin{example*}
    Let $\fg$ be a complex Lie algebra.  The constant algebraic family of complex Lie algebras with fiber $\fg$ is
    \[\C[z]\otimes_{\C}\fg.\]
    Clearly every fiber of $\C[z]\otimes_{\C}\fg$ is canonically isomorphic to $\fg$. Explicitly, the evaluation at $\alpha$ isomorphism $\operatorname{ev}_{\alpha}:(\C[z]\otimes_{\C}\fg)|_{\alpha}\longrightarrow \fg$ is given by 
    \[\sum_{i}f_i(z)\otimes X_i+I_{\alpha}\C[z]\otimes_{\C}\fg\longmapsto \sum_{i}f_i(\alpha)X_i.\]
\end{example*}

\subsubsection{Families of Harish-Chandra pairs}
In this paper we only consider algebraic families of Harish-Chandra pairs with a constant group scheme. The definition below is the general definition from \cite{eyalbern,MR4123111}, adapted to the present setting of families over $\mathbb{A}^1_{\C}$ and with a constant group scheme.  

An \textit{algebraic family of Harish-Chandra pairs over $\mathbb{A}^1_{\C}$} is a pair $(\boldsymbol{\mathfrak{g}},K)$ with $\boldsymbol{\mathfrak{g}}$ being an algebraic family of complex Lie algebras over $\mathbb{A}^1_{\C}$ and $K$ being a complex algebraic group acting on $\boldsymbol{\mathfrak{g}}$ by automorphisms of Lie algebras over $\C[z]$ 
\begin{equation*}\pi_{K,\g}:K\longrightarrow \operatorname{Aut}_{\C[z]}(\boldsymbol{\mathfrak{g}}),
\end{equation*}
together with a $K$-equivariant embedding of complex Lie algebras \begin{equation*}j:\operatorname{Lie}(K)\longrightarrow \boldsymbol{\mathfrak{g}},
\end{equation*}
from  the Lie algebra of $K$ into  $\boldsymbol{\mathfrak{g}}$. Here the action of $K$ on its Lie algebra is the adjoint action. It is further required that the two  actions   of $\operatorname{Lie}(K)$ on $\boldsymbol{\mathfrak{g}}$ coincide in the sense that
\begin{equation*}\operatorname{ad}_{j(Y)}(Z)=\pi_{\fk,\g}(Y)(Z), \quad \forall Y\in \operatorname{Lie}(K),Z\in \boldsymbol{\mathfrak{g}}.
\end{equation*}
Here $\pi_{\fk,\g}$ stands for the action of $\operatorname{Lie}(K)$ that is obtained from the action $\pi_{K,\g}$ of $K$ on $\g$. 

\begin{example*}
    Let $(\fg,K)$ be a complex Harish-Chandra pair.  The constant algebraic family of Harish-Chandra pairs  with fiber $(\fg,K)$ is the pair 
    \[(\C[z]\otimes_{\C}\fg,K),\]
    where the action of $K$ on $\C[z]\otimes_{\C}\fg$ is obtained from the action on $\fg$ via scalar extension, and the embedding $\operatorname{Lie}(K)\longrightarrow \C[z]\otimes_{\C}\fg$ is obtained from   the embedding $\operatorname{Lie}(K)\longrightarrow  \fg$ composed with 
    \begin{eqnarray}\nonumber
        && \fg \longrightarrow \C[z]\otimes_{\C}\fg\\ \nonumber
        && X\longmapsto 1\otimes X.
    \end{eqnarray}

    Clearly for  every $\alpha\in \C$, the pair $((\C[z]\otimes_{\C}\fg)|_{\alpha},K)$ is a Harish-Chandra pair canonically isomorphic  to $(\fg,K)$. 
\end{example*}

\subsubsection{The deformation family}\label{df}
We now go back to the notation and assumptions of Subsection \ref{Gr}.
In particular, $G(\R)$ is a connected real reductive group, a real form of   a complex connected reductive algebraic group $G$, $K$ the fixed point subgroup of an  involution $\theta$ of $G$, and $\fg$ the complex Lie algebra of $G$ having a Cartan decomposition 
\[\mathfrak{g}=\mathfrak{k}\oplus \mathfrak{p}.\]
The pair $(\fg,K)$ is the Harish-Chandra pair of $G(\R)$. 
By definition \textit{the deformation family of $G(\R)$} is the algebraic subfamily $(\g_d,K)$ of the constant family of Harish-Chandra pairs  $(\C[z]\otimes_{\C}\fg,K)$, with 
\[\g_d:=\k\oplus \p_d, \qquad \k:=\C[z]\otimes_{\C}\fk,\quad    \p_d:= z\C[z]\otimes_{\C}\fp.\]

The involution $\theta$ of $\fg$ can be extended to an involution $\boldsymbol{\theta}$ of $\g_d$ whose eigenspaces are $\k$ and $\p_d$. 

One can  verify that if $\{X_1,...,X_d\}$ is a basis for $\fk$ and $\{X_{d+1},...,X_{n}\}$ is  a basis for $\fp$ with structure constants $\{C_{ij}^k\}$, that is, with  $[X_i,X_j]=\sum_{k=1}^nC_{ij}^k{X}_k$,  then the set $\{\boldsymbol{X}_1:=1\otimes X_1,...,\boldsymbol{X}_d:=1\otimes X_d\}$ is a basis for $\k$ over $\C[z]$, and $\{\boldsymbol{X}_{d+1}:=z\otimes X_{d+1},...,\boldsymbol{X}_n:=z\otimes X_{n}\}$ is a basis for $\p_d$ over $\C[z]$. 

Moreover, the structure constants of the basis $\{\boldsymbol{X}_1,...,\boldsymbol{X}_n\}$ are given by \[[\boldsymbol{X}_i,\boldsymbol{X}_j]=\begin{cases}
      \sum_{k=1}^nC_{ij}^k\boldsymbol{X}_k, &   i\leq d \hspace{1mm}\text{or} \hspace{1mm} j \leq d \\
     \sum_{k=1}^dC_{ij}^k z^2 \boldsymbol{X}_k  ,&  d+1\leq i,j\leq n.
    \end{cases} \]

\begin{lemma*}
    The unique complex linear map $\eta:\g_d|_0\longrightarrow \fg_0$ 
    satisfying \begin{eqnarray}\nonumber
      &&\eta(f(z)\otimes X+I_0\g_d)=\begin{cases}
        f(0)X,& X\in \fk\\
        f'(0)X,& X\in \fp
        \end{cases}
\end{eqnarray} 
for every $f(z)\otimes X$ that is an eigensection of $\boldsymbol{\theta}$, 
is an isomorphism of complex Lie algebras. 
\end{lemma*}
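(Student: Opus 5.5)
The proof splits into three checks: that $\eta$ is well defined, that it is a linear bijection, and that it intertwines the brackets.

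\emph{Well-definedness.} Write $\g_d=\k\oplus\p_d$ with $\k=\C[z]\otimes\fk$ and $\p_d=z\C[z]\otimes\fp$. Then $I_0\g_d=z\k\oplus z\p_d$, so
\[
\g_d|_0=\k/z\k\oplus\p_d/z\p_d .
\]
On the $\fk$-part, the map $f\otimes X\mapsto f(0)X$ is the evaluation map $\operatorname{ev}_0$. It is well defined and kills exactly $z\k$, so it induces a linear isomorphism $\k/z\k\to\fk$.

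On the $\fp$-part, every element of $\p_d$ can be written $zg(z)\otimes X$, and the rule sends it to $\frac{d}{dz}(zg)(0)X=g(0)X$. Equivalently, we use the $\C[z]$-basis $\boldsymbol X_j=z\otimes X_j$ and send $\sum g_j\boldsymbol X_j\mapsto\sum g_j(0)X_j$. This is $\C$-linear, surjective onto $\fp=\fp_0$, and its kernel is $\{\sum g_j\boldsymbol X_j : g_j(0)=0\}=z\p_d$. So it induces a linear isomorphism $\p_d/z\p_d\to\fp_0$.

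Uniqueness of $\eta$ is clear, because the eigensections $f\otimes X$ span $\g_d$. Taking the direct sum of the two pieces shows that $\eta$ is a well-defined linear bijection $\g_d|_0\to\fk\oplus\fp_0=\fg_0$.

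\emph{Bracket compatibility.} By bilinearity it suffices to check the bracket on the images of the basis elements $\boldsymbol X_1,\dots,\boldsymbol X_n$. We read off the structure constants stated above and reduce mod $z$; then we compare with the bracket $[\ ,\ ]_0$ on the basis $X_1,\dots,X_n$ of $\fg_0$. Note that $\eta(\boldsymbol X_i)=X_i$ for every $i$.

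\begin{itemize}
\item If $i\le d$ or $j\le d$, we get $[\boldsymbol X_i,\boldsymbol X_j]=\sum_k C_{ij}^k\boldsymbol X_k$. This maps to $\sum_k C_{ij}^kX_k=[X_i,X_j]$. This equals $[X_i,X_j]_0$, because $[\ ,\ ]_0$ agrees with $[\ ,\ ]$ whenever one argument lies in $\fk$: both $\fk$–$\fk$ and $\fk$–$\fp$ brackets are unchanged.
\item If $i,j>d$, we get $[\boldsymbol X_i,\boldsymbol X_j]=z^2\sum_{k\le d}C_{ij}^k\boldsymbol X_k\in z\g_d$. This vanishes in the fiber, which matches $[X_i,X_j]_0=0$ since $\fp_0$ is abelian.
\end{itemize}

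The structure constants were given in the text, so the only remaining subtlety is the well-definedness of the derivative-at-zero formula on the $\fp$ part. This is handled by rewriting $\p_d$ elements as $z\,g(z)\otimes X$, and I expect no real obstacle.
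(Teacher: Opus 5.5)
Your proposal is correct and follows essentially the same route as the paper. The paper likewise shows that $\eta$ sends the $\C$-basis $\{\boldsymbol X_i+I_0\g_d\}$ of $\g_d|_0$ to the basis $\{X_i\}$ of $\fg_0$, and then checks brackets case by case from the structure constants, with the $\fp$--$\fp$ brackets landing in $z^2\k\subseteq I_0\g_d$. Your decomposition $\g_d|_0=\k/z\k\oplus\p_d/z\p_d$ simply spells out the well-definedness that the paper dismisses as ``direct calculation.''
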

 \begin{proof}
By direct calculation $\eta$ is well-defined.
 Fix bases for $\fk$ and $\fp$ and use the notation of the discussion before the Lemma. Note that 
$\{\boldsymbol{X}_1+I_0\g_d,...,\boldsymbol{X}_n+I_0\g_d\}$ is a basis for $\g_d|_0$ and $\eta$ satisfies 
\begin{eqnarray}\nonumber
    &&\eta\left(\sum_{i=1}^nf_i(z) (\boldsymbol{X}_i+I_0\g_d)\right)=\sum_{i=1}^nf_i(0)X_i.
\end{eqnarray}

Since $\eta$ takes the basis $\{\boldsymbol{X}_1+I_0\g_d,...,\boldsymbol{X}_n+I_0\g_d\}$ of  $\g_d|_0$ to the basis $\{X_1,...,X_n\}$ of $\fg_0$, $\eta$ is a linear bijection. To see that $\eta$ is a morphism of Lie algebras observe that 
\begin{eqnarray}\nonumber
&& \eta \left([\boldsymbol{X}_i+I_0\g_d,\boldsymbol{X}_j+I_0\g_d] \right)=\eta   \left([\boldsymbol{X}_i,\boldsymbol{X}_j] +I_0\g_d \right)\\ \nonumber
&&=\begin{cases}
      \eta \left(\sum_{k=1}^nC_{ij}^k\boldsymbol{X}_k+I_0\g_d \right), &   i\leq d \hspace{1mm}\text{or} \hspace{1mm} j \leq d \\
     \eta \left(\sum_{k=1}^dC_{ij}^k z^2 \boldsymbol{X}_k+I_0\g_d \right)  ,&  d+1\leq i,j\leq n
     \end{cases}\\ \nonumber
    && =\begin{cases}
    \sum_{k=1}^nC_{ij}^k{X}_k , &   i\leq d \hspace{1mm}\text{or} \hspace{1mm} j \leq d \\
     0  ,&  d+1\leq i,j\leq n
     \end{cases}\\ \nonumber
    && =[X_i,X_j]_0=[\eta \left(\boldsymbol{X}_i+I_0\g_d\right),\eta \left(\boldsymbol{X}_j+I_0\g_d\right)]_0. 
\end{eqnarray}
 \end{proof}

\subsubsection{The zero fiber of the universal enveloping algebra}\label{414}
We denote the universal enveloping algebra of $\g_d$ by $\mathcal{U}(\g_d)$. It is an associative $\C[z]$-algebra together with a morphism of Lie algebras over $\C[z]$, $\iota:\g_d\longrightarrow \mathcal{U}(\g_d)$ such that for any associative $\C[z]$-algebra $A$ and a morphism of Lie algebras  over $\C[z]$, $\phi:\g_d\longrightarrow A$ there exists a unique morphism of $\C[z]$-algebras $\widetilde{\phi}:\mathcal{U}(\g_d)\longrightarrow A$ such that $\phi=\widetilde{\phi}\circ \iota$. 
The universal enveloping algebra is unique up to isomorphism. We shall realize $\mathcal{U}(\g_d)$ as the quotient of the tensor algebra $T(\g_d)$ by the two-sided ideal generated by 
\[X\otimes Y-Y\otimes X-[X,Y], \quad \forall X,Y\in \g_d.\]

\begin{lemma*}
   Let $\psi:\g_d|_0\longrightarrow \mathcal{U}(\g_d)|_0$ be the natural morphism of complex Lie algebras that is defined by 
  \[\psi(X+I_0\g_d)=\iota(X)+I_0\mathcal{U}(\g_d), \quad \forall X\in \g_d.\]
  Then the induced map from the universal property of the enveloping algebra $\widetilde{\psi}:\mathcal{U}(\g_d|_0)\longrightarrow \mathcal{U}(\g_d)|_0$ is an isomorphism.
\end{lemma*}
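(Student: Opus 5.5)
The plan is to show that the universal enveloping algebra commutes with specialization. This is a general fact for Lie algebras over $\C[z]$ that are free as $\C[z]$-modules. We construct an explicit inverse to $\widetilde{\psi}$ using the universal property of $\mathcal{U}(\g_d)$.

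\emph{Step 1: a map $\mathcal{U}(\g_d)\to\mathcal{U}(\g_d|_0)$.} Consider the composite
\[
\g_d\longrightarrow\g_d|_0\longrightarrow\mathcal{U}(\g_d|_0).
\]
We regard $\mathcal{U}(\g_d|_0)$ as a $\C[z]$-algebra on which $z$ acts by $0$. The composite is then a morphism of Lie algebras over $\C[z]$: it is $\C[z]$-linear, since $z\g_d=I_0\g_d$ maps to zero, and it preserves brackets because the quotient map is a Lie morphism. The universal property of $\mathcal{U}(\g_d)$ gives a $\C[z]$-algebra morphism
\[
\rho:\mathcal{U}(\g_d)\longrightarrow\mathcal{U}(\g_d|_0).
\]
Since $z$ acts by zero on the target, $\rho$ kills $I_0\mathcal{U}(\g_d)=z\,\mathcal{U}(\g_d)$. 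It therefore descends to a $\C$-algebra morphism
\[
\bar\rho:\mathcal{U}(\g_d)|_0\longrightarrow\mathcal{U}(\g_d|_0).
\]

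\emph{Step 2: check that $\bar\rho$ and $\widetilde\psi$ are mutually inverse.} Both composites are algebra morphisms, so it suffices to check them on generators.

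\begin{itemize}
\item $\mathcal{U}(\g_d|_0)$ is generated by the image of $\g_d|_0$. For $X\in\g_d$ we have
\[
\bar\rho\bigl(\widetilde\psi(X+I_0\g_d)\bigr)=\bar\rho\bigl(\iota(X)+I_0\mathcal{U}(\g_d)\bigr)=X+I_0\g_d .
\]
\item $\mathcal{U}(\g_d)|_0$ is generated as a $\C$-algebra by the classes $\iota(X)+I_0\mathcal{U}(\g_d)$. This holds because $\mathcal{U}(\g_d)$ is generated over $\C[z]$ by $\iota(\g_d)$, and $z$ acts by $0$ on the fiber. By the same computation,
\[
\widetilde\psi\bar\rho\bigl(\iota(X)+I_0\mathcal{U}(\g_d)\bigr)=\iota(X)+I_0\mathcal{U}(\g_d).
\]
\end{itemize}

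Hence $\widetilde\psi$ is an isomorphism with inverse $\bar\rho$.

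\emph{Main obstacle.} The only delicate point is the well-definedness of the maps, mainly that $\rho$ is $\C[z]$-linear and kills $z\,\mathcal{U}(\g_d)$. This is handled by viewing $\mathcal{U}(\g_d|_0)$ as a $\C[z]$-algebra via $z\mapsto0$.

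\emph{Alternative.} A cross-check, or a replacement if one prefers bases, uses PBW over $\C[z]$. This applies because $\g_d$ is free with basis $\boldsymbol{X}_1,\dots,\boldsymbol{X}_n$. The ordered monomials in the $\boldsymbol{X}_i$ form a $\C[z]$-basis of $\mathcal{U}(\g_d)$, so their classes form a $\C$-basis of $\mathcal{U}(\g_d)|_0$. The map $\widetilde\psi$ sends the PBW basis of $\mathcal{U}(\g_d|_0)$, in the classes $\boldsymbol{X}_i+I_0\g_d$, bijectively onto this basis. The universal-property argument above avoids invoking PBW over a ring, so I would present that one.
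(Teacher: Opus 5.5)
Your argument is correct and is essentially the paper's: the paper just cites the base-change property of enveloping algebras over commutative rings (Bourbaki, Ch.~I, \S2, no.~9). Your inverse $\bar\rho$, obtained from the universal property of $\mathcal{U}(\g_d)$ with $\mathcal{U}(\g_d|_0)$ regarded as a $\C[z]$-algebra via $z\mapsto 0$, is exactly the standard proof of that property specialized to $\C[z]\to\C[z]/(z)$. A minor remark: this universal-property argument does not use that $\g_d$ is free over $\C[z]$; freeness matters only for your PBW cross-check.
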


The lemma immediately follows from base change properties for enveloping algebras over commutative rings, see 
\cite[Ch.~I, \S 2, no.~9, p.~25]{BourbakiLieGroupsLieAlgebrasCh1to3}.
We denote by $\widetilde{\eta}:\mathcal{U}(\g_d|_0)\longrightarrow \mathcal{U}(\fg_0)$ the isomorphism of algebras induced by the Lie algebra isomorphism $\eta$ of Subsubsection \ref{df}.

\begin{corollary*}
    The map 
    $$\widetilde{\eta}\circ \widetilde{\psi}^{-1}:\mathcal{U}(\g_d)|_0
\longrightarrow \mathcal{U}(\fg_0)$$
   is an isomorphism.
\end{corollary*}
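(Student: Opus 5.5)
The map $\widetilde{\eta}\circ\widetilde{\psi}^{-1}$ is a composite of two maps, and I will show each is an isomorphism of complex algebras.

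For $\widetilde{\psi}^{-1}$: by the preceding lemma, $\widetilde{\psi}:\mathcal{U}(\g_d|_0)\longrightarrow\mathcal{U}(\g_d)|_0$ is an isomorphism of complex algebras. Hence $\widetilde{\psi}^{-1}$ exists and is again an algebra isomorphism.

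For $\widetilde{\eta}$: by the lemma in Subsubsection \ref{df}, $\eta:\g_d|_0\longrightarrow\fg_0$ is an isomorphism of complex Lie algebras. Functoriality of the universal enveloping algebra then gives algebra maps
\[
\widetilde{\eta}:\mathcal{U}(\g_d|_0)\longrightarrow\mathcal{U}(\fg_0),\qquad \widetilde{\eta^{-1}}:\mathcal{U}(\fg_0)\longrightarrow\mathcal{U}(\g_d|_0).
\]
The composites $\widetilde{\eta^{-1}}\circ\widetilde{\eta}$ and $\widetilde{\eta}\circ\widetilde{\eta^{-1}}$ are algebra maps that restrict to the identity on the Lie algebra generators. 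By the uniqueness clause of the universal property, both composites are identities. Hence $\widetilde{\eta}$ is an isomorphism with inverse $\widetilde{\eta^{-1}}$.

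Composing the two isomorphisms gives that $\widetilde{\eta}\circ\widetilde{\psi}^{-1}$ is an isomorphism of algebras.

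There is no real obstacle here. The only substantive input is the base-change statement behind the preceding lemma, namely that the universal enveloping algebra commutes with taking the fiber. That statement is already granted via Bourbaki, and it is valid because $\g_d$ is free over $\C[z]$.
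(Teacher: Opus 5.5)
Your proposal is correct and follows the paper's argument: the corollary is stated without a separate proof because it is just the composite of the inverse of the isomorphism $\widetilde{\psi}$ from the preceding lemma with $\widetilde{\eta}$, which the paper calls an isomorphism because it is induced by the Lie algebra isomorphism $\eta$ (the functoriality argument you spell out). One small aside: the Bourbaki base-change isomorphism for enveloping algebras holds for any Lie algebra over a commutative ring, so the freeness of $\g_d$ over $\C[z]$ is not actually needed for that step.
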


Explicitly for 
$f(z)\otimes X\in \k\cup \p_d\subset \g_d$, 
\[
\left(\widetilde{\eta}\circ\widetilde{\psi}^{-1}\right)
\left(f(z)\otimes X+I_0\mathcal U(\g_d)\right)
=
\begin{cases}
f(0)X,  & X\in\fk,\\
f'(0)X, & X\in\fp.
\end{cases}
\]
 
\subsection{The Clifford algebra of the deformation family}
In this subsection we recall the construction of the Clifford algebra for the deformation family of  a real reductive group. This is a special case of the theory that is covered in \cite{afentoulidisalmpanis2025diracoperatorsalgebraicfamilies}. We shall only describe the setup and quote the relevant results. For proofs and further details see the above mentioned reference.

\subsubsection{The orthogonalizable form on $\p_d$}
We keep the symmetric bilinear form $\beta$ on $\fg$ fixed as in Subsubsection \ref{invform}; recall that it satisfies conditions (1)--(4) there. In particular, $\beta$ is nondegenerate, $\theta$-invariant, and $(\fg,K)$-invariant.

We denote by  $1\otimes {\beta}$ the symmetric bilinear form  on $\C[z]\otimes_{\mathbb{C}}\mathfrak{g}$ obtained by $\C[z]$-linearity from the form $\beta$.

\begin{lemma*}[See Lemma 4.2.2 of    
\cite{afentoulidisalmpanis2025diracoperatorsalgebraicfamilies}]
    There is a unique $\C[z]$-bilinear symmetric
     $(\boldsymbol{\fk},K)$-invariant   and orthogonalizable form $ \boldsymbol{\beta}_{d}:\boldsymbol{\fp}_d\times \boldsymbol{\fp}_d\longrightarrow  \C[z]$ such that 
    \[ \left(1\otimes{\beta}\right)|_{\boldsymbol{\fp}_d\times \boldsymbol{\fp}_d}=z^2\boldsymbol{\beta}_{d}.\]
\end{lemma*}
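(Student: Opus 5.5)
The idea is to define $\boldsymbol{\beta}_d$ by dividing by $z^2$, using that $\p_d = z\C[z]\otimes_{\C}\fp$ is a free $\C[z]$-module with basis $\boldsymbol{X}_{d+1}=z\otimes X_{d+1},\dots,\boldsymbol{X}_n=z\otimes X_n$. For sections $z\otimes X$ and $z\otimes Y$ with $X,Y\in\fp$ we have $(1\otimes\beta)(z\otimes X,z\otimes Y)=z^2\beta(X,Y)$. This forces the definition
\[
\boldsymbol{\beta}_d\Bigl(\sum_i f_i\boldsymbol{X}_i,\sum_j g_j\boldsymbol{X}_j\Bigr)=\sum_{i,j}f_ig_j\,\beta(X_i,X_j),
\]
which is well defined, $\C[z]$-bilinear and symmetric because $\beta$ is symmetric.

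\textbf{Uniqueness.} $\C[z]$ is an integral domain, so $z^2$ is a non-zero-divisor. Any two forms satisfying $z^2\boldsymbol{\beta}=(1\otimes\beta)|_{\p_d\times\p_d}$ therefore agree. The identity $z^2\boldsymbol{\beta}_d=(1\otimes\beta)|_{\p_d\times\p_d}$ itself follows by checking it on basis pairs and extending bilinearly.

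\textbf{Invariance.} $K$ acts $\C[z]$-linearly by scalar extension of $\operatorname{Ad}$ and preserves $\p_d$. Hence
\[
z^2\boldsymbol{\beta}_d(kA,kB)=(1\otimes\beta)(kA,kB)=(1\otimes\beta)(A,B)=z^2\boldsymbol{\beta}_d(A,B),
\]
and cancelling $z^2$ gives $K$-invariance. The same cancellation applied to the infinitesimal identity
\[
(1\otimes\beta)([Y,A],B)+(1\otimes\beta)(A,[Y,B])=0, \qquad Y\in\k,
\]
gives $\k$-invariance. This identity holds by $\C[z]$-linear extension of the $\fg$-invariance of $\beta$, using that $[\k,\p_d]\subseteq\p_d$.

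\textbf{Orthogonalizability.} This is the step I expect to need the most care, since the meaning of "orthogonalizable" is fixed in the cited reference. I take it to mean that $\p_d$ has a $\C[z]$-basis that is orthonormal for $\boldsymbol{\beta}_d$; with that convention the argument is short. Choose a $\beta$-orthonormal basis $e_1,\dots,e_m$ of the real space $\fp^\sigma$, which exists by condition (2). It is a $\C$-basis of $\fp$, so $\{z\otimes e_i\}$ is a $\C[z]$-basis of $\p_d$. Then $\boldsymbol{\beta}_d(z\otimes e_i,z\otimes e_j)=\beta(e_i,e_j)=\delta_{ij}$. In particular the Gram matrix is the identity, so $\boldsymbol{\beta}_d$ is nondegenerate, even unimodular, over $\C[z]$. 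If the reference's definition is instead phrased fiberwise, namely nondegeneracy at every fiber, the same basis gives it at once: each specialization is the form $\beta_0$ for $\alpha=0$, and $\beta|_{\fp}$ for $\alpha\neq 0$.
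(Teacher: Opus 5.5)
Your proof is correct and follows the same construction the paper relies on. The paper itself defers the proof to Lemma~4.2.2 of the cited work, but what it uses afterwards matches your argument: the $\C[z]$-basis $\boldsymbol{X}_i=z\otimes X_i$ of $\p_d$, the isometry $T:(\C[z]\otimes_{\C}\fp,1\otimes\beta)\to(\p_d,\boldsymbol{\beta}_d)$, and the fact that $\{z\otimes X_i\}$ is $\boldsymbol{\beta}_d$-orthonormal for $\beta_0$-orthonormal $\{X_i\}$ are exactly your definition of $\boldsymbol{\beta}_d$ by division by $z^2$ and your orthogonalizability argument, with uniqueness and $(\boldsymbol{\fk},K)$-invariance obtained by cancelling the non-zero-divisor $z^2$. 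Your reading of \emph{orthogonalizable} also agrees with the convention the paper states right after the lemma: there is a basis in which the form is represented by the identity matrix.
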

Recall that orthogonalizable means that there is a basis for $\p_d$ in which the form is represented by the identity matrix.  In particular, the form is unimodular, that is,  the induced morphism $\p_d\longrightarrow \p_d^*=\operatorname{Hom}_{\C[z]}(\p_d,\C[z])$ is an isomorphism of $\C[z]$-modules. 

We identify the image  of $\p_d$ in $\g_d|_0$ under the canonical projection with $\p_d|_0$.
The $\C[z]$-bilinear form $\boldsymbol{\beta}_d$ induces a $\C$-bilinear form $\boldsymbol{\beta}_{d}|_0$ on $\p_d|_0$ given by 
\[\boldsymbol{\beta}_{d}|_0(\boldsymbol{X}+I_0\p_d,\boldsymbol{Y}+I_0\p_d):=\boldsymbol{\beta}_{d}(\boldsymbol{X},\boldsymbol{Y})(0). \]

\begin{proposition*}
The map $\eta$ intertwines the form $\boldsymbol{\beta}_{d}|_0$ on $\p_d|_0$ and the form $\beta_0$ on $\fp_0$. 
Explicitly,  
the following diagram is commutative

\[\xymatrix{
& \p_d|_0\otimes_{\C} \p_d|_0\ar[d]^{\eta\otimes \eta } \ar[r]^{\quad \boldsymbol{\beta}_{d}|_0}
& \C   \\
&\fp_0\otimes_{\C}\fp_0\ar[ru]_{\beta_0}  &   } \]
\end{proposition*}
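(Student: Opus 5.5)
The plan is to check the identity on a $\C[z]$-basis of $\p_d$ and then extend by bilinearity. Fix a basis $\{X_{d+1},\dots,X_n\}$ of $\fp$, and let $\boldsymbol{X}_i=z\otimes X_i$ be the corresponding $\C[z]$-basis of $\p_d$, as in the discussion preceding the Lemma of Subsubsection~\ref{df}. Every element of $\p_d$ is then a $\C[z]$-combination $\sum_i f_i(z)\boldsymbol{X}_i$. Both $\boldsymbol{\beta}_d|_0\circ(\cdot,\cdot)$ and $\beta_0\circ(\eta\times\eta)$ are $\C$-bilinear on $\p_d|_0$. Moreover, the classes $\boldsymbol{X}_i+I_0\p_d$ span $\p_d|_0$ over $\C$. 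It therefore suffices to show
\[
\boldsymbol{\beta}_d(\boldsymbol{X}_i,\boldsymbol{X}_j)(0)=\beta_0\bigl(\eta(\boldsymbol{X}_i+I_0\p_d),\eta(\boldsymbol{X}_j+I_0\p_d)\bigr)
\]
for all $d+1\le i,j\le n$.

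For the right-hand side, apply the defining formula of $\eta$ with $f(z)=z$, so that $f'(0)=1$. This gives $\eta(\boldsymbol{X}_i+I_0\p_d)=X_i\in\fp_0$, and hence the right-hand side equals $\beta_0(X_i,X_j)=\beta(X_i,X_j)$.

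For the left-hand side, use the characterizing property of $\boldsymbol{\beta}_d$ from the quoted Lemma~4.2.2. It gives
\[
z^2\boldsymbol{\beta}_d(\boldsymbol{X}_i,\boldsymbol{X}_j)=(1\otimes\beta)(z\otimes X_i,z\otimes X_j)=z^2\beta(X_i,X_j).
\]
Since $\C[z]$ is an integral domain, we may cancel $z^2$ and obtain $\boldsymbol{\beta}_d(\boldsymbol{X}_i,\boldsymbol{X}_j)=\beta(X_i,X_j)$, a constant polynomial. Evaluating at $0$ yields $\beta(X_i,X_j)$, which matches the right-hand side.

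Two further points need brief justification. First, $\boldsymbol{\beta}_d|_0$ must be well defined on the quotient. This holds because if $\boldsymbol{X}\in I_0\p_d=z\p_d$, then $\boldsymbol{\beta}_d(\boldsymbol{X},\boldsymbol{Y})\in z\C[z]$ vanishes at $0$. Second, the identification of $\p_d|_0$ with the image of $\p_d$ in $\g_d|_0$ must be compatible with the restriction of $\eta$. This holds because $I_0\g_d\cap\p_d=z\p_d$, using the direct sum decomposition $\g_d=\k\oplus\p_d$ as $\C[z]$-modules. I expect neither point to be a real obstacle; the only substantive step is the cancellation of $z^2$.
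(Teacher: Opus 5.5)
Your proposal is correct and follows essentially the same route as the paper. You check the identity on the classes of the basis $\boldsymbol{X}_i=z\otimes X_i$, using $\eta(\boldsymbol{X}_i+I_0\p_d)=X_i$ and the relation $z^2\boldsymbol{\beta}_d=(1\otimes\beta)|_{\p_d\times\p_d}$ to get $\boldsymbol{\beta}_d(\boldsymbol{X}_i,\boldsymbol{X}_j)=\beta(X_i,X_j)$; this is exactly the paper's computation. Your extra remarks on the well-definedness of $\boldsymbol{\beta}_d|_0$ and on $I_0\g_d\cap\p_d=z\p_d$ are also correct, and they make explicit two points the paper leaves implicit.
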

 \begin{proof}
 We use the bases convention and notation of Subsubsection \ref{df}. 
For every $d< i,j\leq n$ 
\begin{eqnarray}\nonumber
&&   \boldsymbol{\beta}_{d}|_0(\boldsymbol{X}_i+I_0\p_d,\boldsymbol{X}_j+I_0\p_d)=\boldsymbol{\beta}_{d}(\boldsymbol{X}_i,\boldsymbol{X}_j)(0)=\boldsymbol{\beta}_{d}(z\otimes {X}_i,z\otimes{X}_j)(0)=\\ \nonumber
&&\left(z^{-2}(1\otimes \beta)(z\otimes X_i,z\otimes X_j) \right)(0)=\beta_0(X_i,X_j)=\beta_0(\eta(\boldsymbol{X}_i+I_0\p_d),\eta(\boldsymbol{X}_j+I_0\p_d)).
\end{eqnarray}    
 \end{proof}   

\subsubsection{The quadratic spaces }\label{422}
By abuse of terminology we shall refer to a $\C$-module with a $\C$-bilinear symmetric form, respectively a $\C[z]$-module with a $\C[z]$-bilinear symmetric form, as a quadratic space over $\C$, respectively over $\C[z]$. 

We define a canonical complex subspace  $\fp_1:=\C z\otimes_{\C}\fp$ of $\p_d$.
 Proposition~4.3.1 of \cite{afentoulidisalmpanis2025diracoperatorsalgebraicfamilies} implies the following lemma. 

\begin{lemma*} There is a canonical $( \fk,K)$-equivariant isomorphism $T_1$ of quadratic spaces from  
$(\fp,\beta)$
onto 
$( \fp_1,\boldsymbol{\beta}_d|_{\fp_1})$. Explicitly, it  is given by $T_1(X)=z\otimes X$ for every $X\in \fp$.
In addition, the $\C[z]$-linear extension of $T_1$ to a morphism $T: \C[z]\otimes_{\C}\fp \longrightarrow \p_d$ is an isomorphism between the quadratic spaces $(\C[z]\otimes_{\C}\fp,1\otimes \beta)$ and $(\p_d,\boldsymbol{\beta}_d)$.
\end{lemma*}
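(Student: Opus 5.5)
The proof is a direct verification in three parts: $T_1$ is a $\C$-linear isomorphism onto $\fp_1$; $T_1$ is $(\fk,K)$-equivariant and carries $\beta$ to $\boldsymbol{\beta}_d|_{\fp_1}$; and the $\C[z]$-linear extension $T$ is an isomorphism of quadratic spaces over $\C[z]$.

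\emph{Bijectivity of $T_1$.} The map $X\mapsto z\otimes X$ is $\C$-linear and injective, because $z\otimes X=0$ in the free $\C[z]$-module $\C[z]\otimes_{\C}\fp$ only when $X=0$. By the definition $\fp_1=\C z\otimes_{\C}\fp$, it is onto $\fp_1$.

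\emph{Equivariance.} The $K$-action on $\g_d\subseteq \C[z]\otimes_{\C}\fg$ is the scalar extension of the adjoint action. Hence $k\cdot(z\otimes X)=z\otimes \operatorname{Ad}(k)X$, which is exactly $K$-equivariance of $T_1$. Similarly, for $Y\in\fk$ embedded as $1\otimes Y$, we have $[1\otimes Y,z\otimes X]=z\otimes[Y,X]$, which gives $\fk$-equivariance. In particular $\fp_1$ is $(\fk,K)$-stable.

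\emph{Forms.} By the lemma (Lemma 4.2.2 of \cite{afentoulidisalmpanis2025diracoperatorsalgebraicfamilies}), $z^2\boldsymbol{\beta}_d=(1\otimes\beta)|_{\p_d\times\p_d}$. Therefore
\[
z^2\boldsymbol{\beta}_d(z\otimes X,z\otimes Y)=z^2\beta(X,Y).
\]
Since $\C[z]$ is a domain, we may cancel $z^2$ to get $\boldsymbol{\beta}_d(T_1X,T_1Y)=\beta(X,Y)$, a constant polynomial. So $T_1$ is an isometry onto $(\fp_1,\boldsymbol{\beta}_d|_{\fp_1})$, and this form takes values in $\C\subset\C[z]$.

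\emph{The extension $T$.} Define $T(f\otimes X)=f\cdot(z\otimes X)=zf\otimes X$. It is $\C[z]$-linear, and it is bijective onto $\p_d=z\C[z]\otimes_{\C}\fp$, since multiplication by $z$ is a $\C[z]$-module isomorphism $\C[z]\to z\C[z]$. In bases this is clear: $T$ sends the $\C[z]$-basis $1\otimes X_i$ to the basis $\boldsymbol{X}_i=z\otimes X_i$ of $\p_d$ recorded in Subsubsection \ref{df}. Isometry follows by $\C[z]$-bilinearity from the computation on $T_1$:
\[
\boldsymbol{\beta}_d\bigl(T(f\otimes X),T(g\otimes Y)\bigr)=fg\,\beta(X,Y)=(1\otimes\beta)(f\otimes X,g\otimes Y).
\]

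None of these steps is a genuine obstacle. The only point needing care is the cancellation of $z^2$, which uses that $\C[z]$ is an integral domain and that $\boldsymbol{\beta}_d$ is the unique form characterized in the quoted lemma.
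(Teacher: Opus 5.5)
Your argument is correct and complete. The paper gives no computation for this lemma; it only says that the lemma follows from Proposition~4.3.1 of \cite{afentoulidisalmpanis2025diracoperatorsalgebraicfamilies}.

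You instead verify everything directly from the single identity $(1\otimes\beta)|_{\p_d\times\p_d}=z^2\boldsymbol{\beta}_d$ in the lemma defining $\boldsymbol{\beta}_d$. Cancelling $z^2$ in the domain $\C[z]$ gives the explicit formula $\boldsymbol{\beta}_d(zf\otimes X,zg\otimes Y)=fg\,\beta(X,Y)$.

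This makes the proof self-contained. It also exposes the one step that is not purely formal: $\boldsymbol{\beta}_d|_{\fp_1}$ is $\C$-valued, which is what makes $(\fp_1,\boldsymbol{\beta}_d|_{\fp_1})$ a complex quadratic space at all. As a by-product, $T$ sends a $\beta$-orthonormal basis of $\fp$ to a $\boldsymbol{\beta}_d$-orthonormal basis of $\p_d$, so you recover the orthogonalizability asserted in the quoted lemma.

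You only use the identity itself, not the uniqueness clause. The cancellation already forces uniqueness.

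The paper's citation, by contrast, keeps the exposition short and stays within the companion paper's conventions. The same isometry $T$ is then reused later in this paper, for the zero-fiber identification $\operatorname{ev}_0\circ T|_0^{-1}$ and for building the family spin module $\boldsymbol{S}_d$.
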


In particular it follows from the above Lemma that   the induced map between the fibers  $ T|_0:\left(\C[z]\otimes_{\C}\fp\right)|_{0} \longrightarrow \p_d|_{0}$, which is explicitly  given by 
\[f(z)\otimes X +I_{0}\left(\C[z]\otimes_{\C}\fp\right)\longmapsto f(z)T_1(X) +I_0\p_d\]
is an isomorphism of quadratic spaces.  

We note that the form $(1\otimes \beta)|_{0}$ on $\left(\C[z]\otimes_{\C}\fp\right)|_{0}$ corresponds to the form $\beta$ under the canonical evaluation  isomorphism $\operatorname{ev}_0:\left(\C[z]\otimes_{\C}\fp\right)|_{0}\longrightarrow \fp$.  
Hence  we obtain an isomorphism of quadratic spaces \[\operatorname{ev}_0\circ T|_0^{-1}:(\p_d|_0,\boldsymbol{\beta}_d|_0)\longrightarrow (\fp,\beta).\] 
We observe that $\operatorname{ev}_0\circ T|_0^{-1}$ coincides with the restriction of $\eta$ to ${\p_d|_0}$.

\subsubsection{The Clifford algebra of the deformation family }\label{423}

We denote by $\boldsymbol{\operatorname{Cl}}_d$  or  by $\operatorname{Cl}(\p_d,\boldsymbol{\beta}_{d})$ the Clifford algebra of the quadratic space $(\p_d,\boldsymbol{\beta}_{d})$ realized  as  the quotient of the tensor algebra $T(\p_d)$ by the two-sided ideal $I(\p_d,\boldsymbol{\beta}_d)$ generated by all elements of the form
\begin{equation*}
	X\otimes Y+Y\otimes X-\boldsymbol{\beta}_d (X,Y), \quad X,Y\in\p_d.
\end{equation*} 

We denote   the canonical embedding of $\p_d$ into $\operatorname{Cl}(\p_d,\boldsymbol{\beta}_{d})$ by $\boldsymbol{\gamma}_d$.

The isomorphism $T: (\C[z]\otimes_{\C}\fp,1\otimes \beta) \longrightarrow (\p_d,\boldsymbol{\beta}_d)$ of  quadratic spaces from Subsubsection \ref{422}  induces an isomorphism of Clifford algebras \[T:\operatorname{Cl}(\C[z]\otimes_{\C}\fp,1\otimes \beta) \longrightarrow \operatorname{Cl}(\p_d,\boldsymbol{\beta}_d).\]

Similarly the isomorphisms $T|_0$ and $\operatorname{ev}_0\circ T|_0^{-1}$ between the quadratic spaces in Subsubsection \ref{422}  
 induce  isomorphisms between the corresponding Clifford algebras. Abusing notation,  we also  denote them  by  
\[T|_0:\operatorname{Cl}\left(\left(\C[z]\otimes_{\C}\fp\right)|_{0}, (1\otimes \beta)|_{0}\right) \longrightarrow \operatorname{Cl}\left(\p_d|_{0},\boldsymbol{\beta}_d|_0 \right),\]
and by
 \[\operatorname{ev}_0\circ T|_0^{-1}:\operatorname{Cl}(\p_d|_0,\boldsymbol{\beta}_d|_0)\longrightarrow \operatorname{Cl}(\fp,\beta).\]

\begin{lemma*}
 There is a  canonical isomorphism 
 \[\tau:\operatorname{Cl}(\p_d|_0,\boldsymbol{\beta}_d|_0)\longrightarrow  \operatorname{Cl}(\p_d,\boldsymbol{\beta}_d)|_0,\]
 satisfying 
 \[\tau(X+I_0\p_d)=\boldsymbol{\gamma}_d(X)+I_0\operatorname{Cl}(\p_d,\boldsymbol{\beta}_d), \quad \forall X\in \p_d.\]
\end{lemma*}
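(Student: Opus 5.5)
We construct $\tau$ from the universal property of the Clifford algebra and then show it is bijective by comparing PBW-type bases on both sides.

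\emph{Existence of $\tau$.} The composite $\p_d\xrightarrow{\boldsymbol{\gamma}_d}\operatorname{Cl}(\p_d,\boldsymbol{\beta}_d)\to\operatorname{Cl}(\p_d,\boldsymbol{\beta}_d)|_0$ is $\C[z]$-linear. The target is killed by $z$, so the composite vanishes on $I_0\p_d=z\p_d$ and descends to a $\C$-linear map $\bar\gamma:\p_d|_0\to\operatorname{Cl}(\p_d,\boldsymbol{\beta}_d)|_0$. For $X,Y\in\p_d$ the Clifford relation in $\operatorname{Cl}(\p_d,\boldsymbol{\beta}_d)$ gives
\[
\bar\gamma(\bar X)\bar\gamma(\bar Y)+\bar\gamma(\bar Y)\bar\gamma(\bar X)=\boldsymbol{\beta}_d(X,Y)+I_0\operatorname{Cl}(\p_d,\boldsymbol{\beta}_d).
\]
Since $\boldsymbol{\beta}_d(X,Y)-\boldsymbol{\beta}_d(X,Y)(0)\in z\C[z]$, the right-hand side equals $\boldsymbol{\beta}_d|_0(\bar X,\bar Y)\cdot 1$. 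By the universal property of $\operatorname{Cl}(\p_d|_0,\boldsymbol{\beta}_d|_0)$, $\bar\gamma$ extends uniquely to an algebra morphism $\tau$ with the stated formula. Uniqueness holds because the elements $X+I_0\p_d$ generate the source. Canonicity is clear, since no choices were made.

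\emph{Bijectivity.} By the lemma of Subsubsection~\ref{422}, $\boldsymbol{\beta}_d$ is orthogonalizable, so choose a $\C[z]$-basis $\boldsymbol{f}_1,\dots,\boldsymbol{f}_m$ of $\p_d$ with $\boldsymbol{\beta}_d(\boldsymbol{f}_i,\boldsymbol{f}_j)=\delta_{ij}$. The first task is to show that the ordered monomials $\boldsymbol{\gamma}_d(\boldsymbol{f}_{i_1})\cdots\boldsymbol{\gamma}_d(\boldsymbol{f}_{i_k})$ with $i_1<\dots<i_k$ form a $\C[z]$-basis of $\operatorname{Cl}(\p_d,\boldsymbol{\beta}_d)$. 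The quickest route is to use the isomorphism $T:\operatorname{Cl}(\C[z]\otimes_\C\fp,1\otimes\beta)\to\operatorname{Cl}(\p_d,\boldsymbol{\beta}_d)$ of Subsubsection~\ref{423}, together with the fact that Clifford algebras commute with scalar extension, $\operatorname{Cl}(\C[z]\otimes\fp,1\otimes\beta)\cong\C[z]\otimes_\C\operatorname{Cl}(\fp,\beta)$. This gives freeness of rank $2^m$ with the monomial basis. Consequently $\operatorname{Cl}(\p_d,\boldsymbol{\beta}_d)|_0$ has as $\C$-basis the images of these monomials, and so has dimension $2^m$.

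The classes $\bar{\boldsymbol{f}}_i$ form an orthonormal basis of $\p_d|_0$, so $\operatorname{Cl}(\p_d|_0,\boldsymbol{\beta}_d|_0)$ has the ordered monomials in the $\bar{\boldsymbol{f}}_i$ as a basis, again of size $2^m$. The map $\tau$ sends each such monomial to the corresponding basis monomial of the fiber, so it maps a basis bijectively onto a basis and is an isomorphism.

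The main obstacle is the freeness/PBW statement for the Clifford algebra over $\C[z]$, i.e.\ the compatibility of the Clifford construction with base change. I would handle it either through the scalar-extension argument via $T$ above, or directly by the right-exactness of $-\otimes_{\C[z]}\C[z]/I_0$. In the latter approach, $T(\p_d)|_0\cong T(\p_d|_0)$, and the image of the ideal $I(\p_d,\boldsymbol{\beta}_d)$ is exactly $I(\p_d|_0,\boldsymbol{\beta}_d|_0)$. This second route proves bijectivity directly, without needing bases.
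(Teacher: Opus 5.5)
Your proposal is correct and takes essentially the paper's approach. The paper's proof is a single appeal to the base-change property of Clifford algebras (Bourbaki, Alg\`ebre IX, \S 9, no.~1, Prop.~2). Your right-exactness argument, namely $T(\p_d)|_0\cong T(\p_d|_0)$ together with the observation that $I(\p_d,\boldsymbol{\beta}_d)$ maps onto $I(\p_d|_0,\boldsymbol{\beta}_d|_0)$, is exactly the proof of that property. Your orthonormal-basis comparison is an equally valid variant that rests on the same base-change fact for $\C\to\C[z]$.
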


\begin{proof}
    This follows from the base-change property
of Clifford algebras \cite[Algèbre, Ch.~9, \S 9, no.~1, Prop.~2]{BourbakiAlgebre9}. 
\end{proof}

\begin{corollary*}
The map   \[\operatorname{ev}_0\circ T|_0^{-1}\circ \tau^{-1}:  \operatorname{Cl}(\p_d,\boldsymbol{\beta}_d)|_0\longrightarrow  \operatorname{Cl}(\fp,{\beta}),\]
is an isomorphism. 
\end{corollary*}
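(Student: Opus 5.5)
The corollary asserts that the composite
\[
\operatorname{ev}_0\circ T|_0^{-1}\circ \tau^{-1}:\operatorname{Cl}(\p_d,\boldsymbol{\beta}_d)|_0\longrightarrow\operatorname{Cl}(\fp,\beta)
\]
is an isomorphism. The plan is to check that each of the three factors is an isomorphism of algebras; the composite is then an algebra isomorphism as well.

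First, $\tau$ is an isomorphism by the preceding lemma, which rests on base change for Clifford algebras. So $\tau^{-1}:\operatorname{Cl}(\p_d,\boldsymbol{\beta}_d)|_0\to\operatorname{Cl}(\p_d|_0,\boldsymbol{\beta}_d|_0)$ is a well-defined algebra isomorphism.

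Second, the map $T|_0^{-1}$ on Clifford algebras comes from the quadratic-space isomorphism $T|_0$. That $T|_0$ is an isomorphism of quadratic spaces follows from the lemma of Subsubsection \ref{422}: $T$ is an isomorphism of quadratic $\C[z]$-modules, and specializing at $0$ gives a $\C$-linear bijection $T|_0$ with
\[
\boldsymbol{\beta}_d|_0\bigl(T|_0(u),T|_0(v)\bigr)=(1\otimes\beta)|_0(u,v).
\]
Third, $\operatorname{ev}_0$ is an isometric isomorphism from $\bigl((\C[z]\otimes_{\C}\fp)|_0,(1\otimes\beta)|_0\bigr)$ onto $(\fp,\beta)$.

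The step that needs an argument is functoriality of the Clifford construction. An isometric linear bijection $L:(V,q)\to(V',q')$ sends the generators $X\otimes Y+Y\otimes X-q(X,Y)$ of $I(V,q)$ to the corresponding generators of $I(V',q')$. Hence the induced map $T(V)\to T(V')$ descends to an algebra map $\operatorname{Cl}(V,q)\to\operatorname{Cl}(V',q')$. Applying the same reasoning to $L^{-1}$ produces a two-sided inverse, so this map is an isomorphism. Applied to $T|_0$ and to $\operatorname{ev}_0$, this shows that $T|_0^{-1}$ and $\operatorname{ev}_0$ induce algebra isomorphisms of Clifford algebras.

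Composing the three isomorphisms proves the corollary. It also yields the explicit formula
\[
\boldsymbol{\gamma}_d(X)+I_0\operatorname{Cl}(\p_d,\boldsymbol{\beta}_d)\longmapsto\gamma_0\bigl(\eta(X+I_0\p_d)\bigr),
\]
because the paper notes that $\operatorname{ev}_0\circ T|_0^{-1}$ coincides with $\eta$ restricted to $\p_d|_0$. There is no real obstacle here; the only point needing care is that every map is an isometry, which is exactly what the cited lemmas provide.
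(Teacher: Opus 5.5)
Your proposal is correct and follows the paper's argument. The paper gives no separate proof: the corollary is the composite of the base-change isomorphism $\tau^{-1}$ from the preceding lemma with the Clifford algebra isomorphism induced by the quadratic-space isomorphism $\operatorname{ev}_0\circ T|_0^{-1}$, and your functoriality argument (isometries carry the generators of $I(V,q)$ to those of $I(V',q')$, and the inverse isometry supplies the inverse map) spells out what the paper takes for granted when it says these isometries ``induce isomorphisms between the corresponding Clifford algebras.''
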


\subsubsection{The embeddings of $\fk$ in $\operatorname{Cl}(\p_d,\boldsymbol{\beta}_d)$ }\label{s424}

Similarly to Subsubsection \ref{223}, and following \cite[Sec.~4.6.1]{afentoulidisalmpanis2025diracoperatorsalgebraicfamilies}, we define a morphism of Lie algebras 
over  $\C[z]$ by 
\begin{eqnarray}\nonumber
   && \varphi_{d}:\mathfrak{so}(\boldsymbol{\fp}_d,\boldsymbol{\beta}_d )\longrightarrow \operatorname{Cl}(\boldsymbol{\fp}_d,\boldsymbol{\beta}_d )\\ \nonumber
   && \varphi_d(R_{\boldsymbol{\beta}_d,a,b})=\frac{1}{2}[\boldsymbol{\gamma}_d(a),\boldsymbol{\gamma}_d(b)],
\end{eqnarray}
where for $a,b\in \p_d$, $R_{\boldsymbol{\beta}_d,a,b} $ is the operator in $\mathfrak{so}(\boldsymbol{\fp}_d,\boldsymbol{\beta}_d )$ that is given by 
\[R_{\boldsymbol{\beta}_d,a,b}(X)=\boldsymbol{\beta}_d(X,b)a-\boldsymbol{\beta}_d(X,a)b. \]

The adjoint action of $\k$ on $\p_d$ defines a morphism
\[
\operatorname{ad}_d:\k\longrightarrow
\mathfrak{so}(\p_d,\boldsymbol\beta_d),
\qquad
\operatorname{ad}_d(Y)(X)=[Y,X].
\]
We put $\alpha_d:=\varphi_d\circ\operatorname{ad}_d:\k \longrightarrow  \operatorname{Cl}(\boldsymbol{\fp}_d,\boldsymbol{\beta}_d )$. It is  a morphism of Lie algebras over $\C[z]$.

\subsection{The generalized pair of the deformation family}
In this subsection we recall the definition of the generalized pairs associated with the deformation family. 
 
An \textit{algebraic family of complex (associative) algebras (over  $\mathbb{A}^1_{\C}$)}
is a free $\C[z]$-module  that is also an algebra over $\C[z]$.

Following the notion of a \textit{pair} (or a \textit{generalized pair}) of \cite[Ch.~I.6]{KNV}, see also \cite[Sec.~3.1.3]{afentoulidisalmpanis2025diracoperatorsalgebraicfamilies}, an \textit{algebraic family of generalized (Harish-Chandra) pairs} over $\mathbb{A}^1_{\C}$ is a pair $(\boldsymbol{A},K)$, with $\boldsymbol{A}$ an algebraic family of complex algebras over $\mathbb{A}^1_{\C}$ and $K$ a complex algebraic group acting on $\boldsymbol{A}$ by $\C[z]$-algebra automorphisms, together with a $K$-equivariant morphism of complex Lie algebras $\Delta:\operatorname{Lie}(K)\longrightarrow \boldsymbol{A}$ such that the differential of the action of $K$ on $\boldsymbol{A}$ is given by the inner derivations $Y\longmapsto [\Delta(Y),\_\,]$, $Y\in\operatorname{Lie}(K)$.

\subsubsection{The generalized pair $(\boldsymbol{A}(\g_d,\boldsymbol{\beta}_{d}),\widetilde{K})$}\label{s432}
We define an associative $\C[z]$-algebra via
\[\boldsymbol{A}_d=\boldsymbol{A}(\g_d,\boldsymbol{\beta}_{d}):=\mathcal{U}(\g_d)\otimes_{\C[z]}\operatorname{Cl}(\p_d,\boldsymbol{\beta}_{d}).\]
The group $K$ acts on $\mathcal U(\g_d)$ and
$\operatorname{Cl}(\p_d,\boldsymbol\beta_d)$ by
$\C[z]$-algebra automorphisms. Hence it acts diagonally on
$\boldsymbol A_d$ by $\C[z]$-algebra automorphisms.
The map
\begin{eqnarray}\nonumber
  && \Delta_d:\boldsymbol{\fk}\longrightarrow\boldsymbol{A}(\g_d,\boldsymbol{\beta}_{d}) \\ \nonumber
  &&\Delta_d(X):=X\otimes 1_{\operatorname{Cl}(\p_d,\boldsymbol{\beta}_{d})}+1_{\mathcal{U}(\g_d)}\otimes \alpha_{d}(X)
  \end{eqnarray}
is $K$-equivariant.
The pair $(\boldsymbol{A}(\g_d,\boldsymbol{\beta}_{d}),K)$ together with $\Delta_d$ form an algebraic family of generalized pairs over $\mathbb{A}^1_{\C}$.

Using the canonical homomorphism  $\mathcal{C}:\widetilde{K}\longrightarrow K$ the spin double cover $\widetilde{K}$ acts on $\boldsymbol{A}(\g_d,\boldsymbol{\beta}_{d})$ and $(\boldsymbol{A}(\g_d,\boldsymbol{\beta}_{d}),\widetilde{K})$
is an algebraic family of generalized pairs over $\mathbb{A}^1_{\C}$.
\begin{lemma*}
Using the canonical tensor-product identification $\boldsymbol{A}(\g_d,\boldsymbol{\beta}_{d})|_0\cong \mathcal{U}(\g_d)|_0\otimes_{\C}\operatorname{Cl}(\p_d,\boldsymbol{\beta}_{d})|_0$,
put
\[
\Phi_U:=
\widetilde\eta\circ\widetilde\psi^{-1},
\qquad
\Phi_{\mathrm{Cl}}
:=
\operatorname{ev}_0\circ T|_0^{-1}\circ\tau^{-1}.
\]
The tensor product
\[
\Phi_A
:=
\Phi_U\otimes_{\C}\Phi_{\mathrm{Cl}}
:
\boldsymbol A_d|_0
\longrightarrow
A_0
=
\mathcal U(\fg_0)\otimes_{\C}
\operatorname{Cl}(\fp_0,\beta_0)
\]
is a $\widetilde K$-equivariant algebra isomorphism.
\end{lemma*}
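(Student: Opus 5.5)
The plan has three parts. First, check that the tensor-product identification of the zero fiber is an algebra isomorphism. Then note that $\Phi_A$ is the tensor product of two algebra isomorphisms, so it is an algebra isomorphism. Finally, verify $\widetilde K$-equivariance on generators.

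\textbf{The identification.} Since $\boldsymbol A_d=\mathcal U(\g_d)\otimes_{\C[z]}\operatorname{Cl}(\p_d,\boldsymbol\beta_d)$, base change along $\C[z]\to\C[z]/I_0=\C$ gives a canonical isomorphism
\[
\boldsymbol A_d|_0=\C\otimes_{\C[z]}\bigl(\mathcal U(\g_d)\otimes_{\C[z]}\operatorname{Cl}_d\bigr)\cong(\C\otimes_{\C[z]}\mathcal U(\g_d))\otimes_{\C}(\C\otimes_{\C[z]}\operatorname{Cl}_d).
\]
This is associativity and commutativity of the tensor product. It is an algebra isomorphism because the multiplication on both sides is defined factorwise.

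\textbf{Algebra isomorphism.} By the Corollary in Subsubsection \ref{414}, $\Phi_U=\widetilde\eta\circ\widetilde\psi^{-1}$ is an algebra isomorphism $\mathcal U(\g_d)|_0\to\mathcal U(\fg_0)$. By the Corollary in Subsubsection \ref{423}, $\Phi_{\mathrm{Cl}}$ is an algebra isomorphism $\operatorname{Cl}(\p_d,\boldsymbol\beta_d)|_0\to\operatorname{Cl}(\fp,\beta)=\operatorname{Cl}(\fp_0,\beta_0)$. A tensor product over $\C$ of algebra isomorphisms is an algebra isomorphism of the tensor product algebras, and composing with the identification above finishes this part.

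\textbf{Equivariance.} $\widetilde K$ acts on both sides through $\mathcal C:\widetilde K\to K$ and diagonally on the two tensor factors. It therefore suffices to show that $\Phi_U$ and $\Phi_{\mathrm{Cl}}$ are $K$-equivariant. Both are algebra homomorphisms and $K$ acts by algebra automorphisms, so it is enough to check equivariance on algebra generators.

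For $\Phi_U$, the generators are the images of the eigensections $f(z)\otimes X$ with $X\in\fk$ or $X\in\fp$. The $K$-action on $\g_d\subset\C[z]\otimes\fg$ is $k\cdot(f\otimes X)=f\otimes\operatorname{Ad}(k)X$, and $\operatorname{Ad}(k)$ preserves $\fk$ and $\fp$. The explicit formula $f\otimes X\mapsto f(0)X$ or $f'(0)X$ therefore visibly commutes with $\operatorname{Ad}(k)$. The action of $K$ on $\fg_0$ is also $\operatorname{Ad}(k)$ on the underlying space, since $G_0=\fp\rtimes K$ uses the restricted adjoint action.

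For $\Phi_{\mathrm{Cl}}$, the generators are the classes of $\boldsymbol\gamma_d(z\otimes X)$ for $X\in\fp$. These satisfy
\[
\Phi_{\mathrm{Cl}}\bigl(\boldsymbol\gamma_d(z\otimes X)+I_0\operatorname{Cl}_d\bigr)=\gamma_0(X),
\]
because the restriction of $\eta$ to $\p_d|_0$ equals $\operatorname{ev}_0\circ T|_0^{-1}$ (Subsubsection \ref{422}). Equivariance then follows from the $K$-equivariance of $T_1$, of $\tau$, and of $\operatorname{ev}_0$. Each of these is built from maps commuting with $\operatorname{Ad}(k)$, and $\tau$ is canonical.

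I expect only bookkeeping to be delicate here, namely checking that $\tau$ and the base-change identifications are $K$-equivariant. This holds because $K$ acts $\C[z]$-linearly, so it preserves $I_0$ times each module and descends to the fibers compatibly with all the canonical maps.
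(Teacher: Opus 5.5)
Your proposal is correct and follows the paper's route. The paper obtains the lemma directly from the Corollary in Subsubsection \ref{414} and the Corollary in Subsubsection \ref{423}, which are exactly the two algebra isomorphisms you tensor together. Your generator-level check of $\widetilde K$-equivariance, through $\mathcal C$ on the classes of eigensections and of $\boldsymbol\gamma_d(z\otimes X)$, makes explicit what the paper leaves implicit.
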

This follows from Corollary \ref{414} and Corollary \ref{423}.

\subsection{Canonical sections, Casimir, and Dirac operator}\label{s44}
In this subsection we recall the definitions of the canonical  Casimir and the Dirac operator for the deformation family. 
\subsubsection{Canonical sections}
 Recall (see \cite[Sec.~4.4]{afentoulidisalmpanis2025diracoperatorsalgebraicfamilies}) that on any $\C[z]$-submodule $\boldsymbol{\l}$ of $\p_d$ on which the restriction of $\boldsymbol{\beta}_d$ is unimodular we have the canonical isomorphisms 
\begin{equation}\label{9} 
	\mathrm{End}_{\C[z]}(\boldsymbol{\mathfrak{l}})\longrightarrow  \boldsymbol{\mathfrak{l}}^*\otimes_{\C[z]}\boldsymbol{\mathfrak{l}}\longrightarrow \boldsymbol{\mathfrak{l}}\otimes_{\C[z]}\boldsymbol{\mathfrak{l}}.
\end{equation}
There is a  canonical element 
$\omega(\boldsymbol{\mathfrak{l}},\boldsymbol{\beta}_d)$ in $\boldsymbol{\mathfrak{l}}\otimes_{\C[z]}\boldsymbol{\mathfrak{l}}$ that is defined to be the image under the above-mentioned isomorphisms \eqref{9} of  the identity operator $\operatorname{Id}_{\boldsymbol{\mathfrak{l}}}\in \mathrm{End}_{\C[z]}(\boldsymbol{\mathfrak{l}})$.

We denote the image of $\omega(\boldsymbol{\mathfrak{l}},\boldsymbol{\beta}_d)$ in $\mathcal{U}(\g_d)$
under  the obvious morphisms 
  \[\boldsymbol{\mathfrak{l}}\otimes_{\C[z]}\boldsymbol{\mathfrak{l}}\longrightarrow T(\boldsymbol{\mathfrak{l}})\longrightarrow T(\g_d)\longrightarrow \mathcal{U}(\g_d),\]   by $\Omega(\boldsymbol{\mathfrak{l}},\boldsymbol{\beta}_d)$.

  \subsubsection{The Casimir of the deformation family}

Let $\Omega(\fg,\beta)$ be the Casimir element of the reductive
Lie algebra $\fg$, see Subsubsection \ref{s243}. Following
\cite[Sec.~4.4]{afentoulidisalmpanis2025diracoperatorsalgebraicfamilies},
define the Casimir element of $\g_d$ by
\[
\Omega_d
:=\Omega(\g_d,\beta,z)
:=z^2\otimes\Omega(\fg,\beta)
\in\mathcal U(\g_d)
\subseteq\C[z]\otimes_{\C}\mathcal U(\fg).
\]
This element is central in $\mathcal U(\g_d)$. Moreover,
\[
\Omega_d
=
z^2\otimes\Omega(\fk,\beta|_{\fk})
+\Omega(\p_d,\boldsymbol\beta_d).
\]

\begin{lemma*}
Under the zero-fiber isomorphism,
\[
\left(\widetilde{\eta}\circ\widetilde{\psi}^{-1}\right)
\left(\Omega_d+I_0\mathcal U(\g_d)\right)
=
\Omega(\fp_0,\beta_0)
=
\Omega_0.
\]
\end{lemma*}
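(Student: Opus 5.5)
We use the stated decomposition
\[
\Omega_d=z^2\otimes\Omega(\fk,\beta|_{\fk})+\Omega(\p_d,\boldsymbol\beta_d)
\]
and compute the image of each summand under the zero-fiber isomorphism $\widetilde\eta\circ\widetilde\psi^{-1}$ of Corollary~\ref{414}.

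\emph{The $\fk$-summand.} Choose a basis $\{X_1,\dots,X_d\}$ of $\fk$ with $\beta$-dual basis $\{X_i'\}$ in $\fk$; this is possible since $\beta|_{\fk}$ is nondegenerate by conditions (2) and (3). Then
\[
z^2\otimes\Omega(\fk,\beta|_{\fk})=\sum_i (z^2\otimes X_i')(1\otimes X_i)=z^2\sum_i\boldsymbol{X}_i'\boldsymbol{X}_i,
\]
where $\boldsymbol{X}_i=1\otimes X_i\in\k\subset\g_d$. This element lies in $z\,\mathcal U(\g_d)\subseteq I_0\mathcal U(\g_d)$, so its class in the zero fiber vanishes.

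\emph{The $\fp$-summand.} Choose a $\beta$-orthonormal basis $\{e_j\}$ of $\fp$; it exists because $\beta$ is positive definite on $\fp^\sigma$. By Lemma~\ref{422}, $T$ is an isometry from $(\C[z]\otimes\fp,1\otimes\beta)$ onto $(\p_d,\boldsymbol\beta_d)$. Hence $\{z\otimes e_j\}$ is a $\boldsymbol\beta_d$-orthonormal $\C[z]$-basis of $\p_d$. Directly from the definition of the canonical section via \eqref{9},
\[
\Omega(\p_d,\boldsymbol\beta_d)=\sum_j (z\otimes e_j)^2 .
\]
Here the product is taken in $\mathcal U(\g_d)$, so each term is the image of $\iota(z\otimes e_j)^2$. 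Both $\widetilde\psi$ and $\widetilde\eta$ are algebra maps, and
\[
\widetilde\eta\circ\widetilde\psi^{-1}\bigl(z\otimes e_j+I_0\mathcal U(\g_d)\bigr)=e_j
\]
by the explicit formula after Corollary~\ref{414}, since the derivative of $z$ at $0$ is $1$. Therefore
\[
\widetilde\eta\circ\widetilde\psi^{-1}\bigl(\Omega(\p_d,\boldsymbol\beta_d)+I_0\mathcal U(\g_d)\bigr)=\sum_j e_j^2\in\mathcal U(\fg_0).
\]

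\emph{Identification with $\Omega_0$.} Since $\beta_0=\beta|_{\fp_0}$, the basis $\{e_j\}$ is also $\beta_0$-orthonormal in $\fp_0$. By Subsubsection~\ref{CanEl}, $\omega(\fp_0,\beta_0)=\sum_j e_j\otimes e_j$, and hence $\Omega_0=\sum_j e_j^2$ in $\mathcal U(\fg_0)$. Adding the two summands gives the claimed identity.

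The computation is routine once the decomposition of $\Omega_d$ is granted. The only real point to check is that the canonical section $\omega(\p_d,\boldsymbol\beta_d)$ may be computed in the orthonormal basis $\{z\otimes e_j\}$. This requires that basis to be a genuine $\C[z]$-basis of $\p_d$, which is the content of Lemma~\ref{422}, rather than a basis only after inverting $z$.
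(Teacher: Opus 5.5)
Your proof is correct and follows the paper's argument. You decompose $\Omega_d$, observe that the $\fk$-part $z^2\sum_i \boldsymbol{X}_i'\boldsymbol{X}_i$ lies in $z\,\mathcal U(\g_d)$ and so vanishes in the zero fiber, and then send $\sum_j (z\otimes e_j)^2$ to $\sum_j e_j^2=\Omega_0$ using the $\boldsymbol\beta_d$-orthonormal $\C[z]$-basis $\{z\otimes e_j\}$ of $\p_d$ and the explicit formula for $\widetilde\eta\circ\widetilde\psi^{-1}$. You only spell out a few details that the paper leaves implicit.
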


\begin{proof}
Choose a $\beta_0$-orthonormal basis
$\{X_{d+1},\ldots,X_n\}$ of $\fp_0$. Then
$\{z\otimes X_{d+1},\ldots,z\otimes X_n\}$ is a
$\boldsymbol\beta_d$-orthonormal basis of $\p_d$, and hence
\[
\Omega_d
=
z^2\otimes\Omega(\fk,\beta|_{\fk})
+\sum_{i=d+1}^{n}(z\otimes X_i)^2.
\]
The first summand vanishes in the zero fiber, while
\[
\widetilde{\eta}\circ \widetilde{\psi}^{-1}\bigl((z\otimes X_i)+I_0\mathcal U(\g_d)\bigr)=X_i.
\]
Therefore
\[
\widetilde{\eta}\circ \widetilde{\psi}^{-1}\bigl(\Omega_d+I_0\mathcal U(\g_d)\bigr)
=
\sum_{i=d+1}^{n}X_i^2
=
\Omega(\fp_0,\beta_0).
\]
\end{proof}

  \subsubsection{The Dirac operator of the deformation family} 
The \textit{Dirac operator} $D(\g_d,\boldsymbol{\beta}_{d})$ of $\g_d$ (with respect to the fixed symmetric form $\boldsymbol{\beta}_d$) is the image of the canonical element $\omega(\p_d,\boldsymbol{\beta}_d)$  in $\boldsymbol A_d
=\boldsymbol A(\g_d,\boldsymbol{\beta}_{d})$
under the tensor product of the morphism $\p_d\longrightarrow \mathcal{U}(\g_d)$ and the morphism $\boldsymbol{\gamma}_d:\p_d\longrightarrow \operatorname{Cl}(\p_d,\boldsymbol{\beta}_{d})$. 
If $\{X_{d+1},...,X_n\}$ is an orthonormal basis for $\fp_0$ with respect to $\beta_0$, then
\begin{equation*}
		D(\g_d,\boldsymbol{\beta}_{d})=\sum_{i=d+1}^n(z\otimes X_i)\otimes\boldsymbol{\gamma}_d(z\otimes X_i).
	\end{equation*}
 Recall the isomorphism 
 $
\Phi_A:
\boldsymbol A_d|_0
\longrightarrow
A_0$  from Subsubsection \ref{s432}.
\begin{lemma*}
Under the preceding isomorphism,
\[
\Phi_A\bigl(
D(\g_d,\boldsymbol\beta_d)+I_0\boldsymbol A_d
\bigr)
=
D_0.
\]
\end{lemma*}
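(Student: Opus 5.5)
We compute directly in an orthonormal basis, in the same way as the proof of the Casimir lemma just above. Fix a $\beta_0$-orthonormal basis $\{X_{d+1},\ldots,X_n\}$ of $\fp_0=\fp$, chosen inside $\fp^{\sigma}$. By the Lemma of Subsubsection \ref{422}, the elements $z\otimes X_i=T_1(X_i)$ form a $\boldsymbol\beta_d$-orthonormal $\C[z]$-basis of $\p_d$. Hence the canonical element is $\omega(\p_d,\boldsymbol\beta_d)=\sum_i (z\otimes X_i)\otimes(z\otimes X_i)$, and
\[
D(\g_d,\boldsymbol\beta_d)=\sum_{i=d+1}^n (z\otimes X_i)\otimes\boldsymbol\gamma_d(z\otimes X_i),
\]
as recorded before the statement. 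Since $\Phi_A=\Phi_U\otimes_{\C}\Phi_{\mathrm{Cl}}$ under the tensor identification $\boldsymbol A_d|_0\cong\mathcal U(\g_d)|_0\otimes_{\C}\operatorname{Cl}(\p_d,\boldsymbol\beta_d)|_0$, the class of $D(\g_d,\boldsymbol\beta_d)$ modulo $I_0\boldsymbol A_d$ corresponds to
\[
\sum_i \bigl((z\otimes X_i)+I_0\mathcal U(\g_d)\bigr)\otimes\bigl(\boldsymbol\gamma_d(z\otimes X_i)+I_0\operatorname{Cl}(\p_d,\boldsymbol\beta_d)\bigr).
\]
It therefore suffices to evaluate $\Phi_U$ and $\Phi_{\mathrm{Cl}}$ on each factor separately.

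For the first factor, the explicit formula following Corollary \ref{414} applies to $z\otimes X_i\in\p_d$ with $f(z)=z$. It gives $\Phi_U\bigl((z\otimes X_i)+I_0\mathcal U(\g_d)\bigr)=f'(0)X_i=X_i$, which is exactly the step already used in the proof of the Casimir lemma.

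For the second factor, the Lemma of Subsubsection \ref{423} gives $\tau\bigl((z\otimes X_i)+I_0\p_d\bigr)=\boldsymbol\gamma_d(z\otimes X_i)+I_0\operatorname{Cl}(\p_d,\boldsymbol\beta_d)$. Consequently $\tau^{-1}$ sends this class to the generator $(z\otimes X_i)+I_0\p_d$ of $\operatorname{Cl}(\p_d|_0,\boldsymbol\beta_d|_0)$. Applying $T|_0^{-1}$ yields the class of $1\otimes X_i$ in $(\C[z]\otimes\fp)|_0$, since $T(1\otimes X_i)=z\otimes X_i$. Evaluation at $0$ then gives $X_i$. The induced Clifford isomorphisms send generators $\gamma(v)$ to generators $\gamma(\cdot)$ of the image vector, so $\Phi_{\mathrm{Cl}}$ maps the second factor to $\gamma_0(X_i)$. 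Alternatively, one can invoke the observation that $\operatorname{ev}_0\circ T|_0^{-1}=\eta|_{\p_d|_0}$.

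Combining the two computations gives $\Phi_A\bigl(D(\g_d,\boldsymbol\beta_d)+I_0\boldsymbol A_d\bigr)=\sum_i X_i\otimes\gamma_0(X_i)$. This equals $D_0$ by the orthonormal-basis expression for $D_0$ in Subsubsection \ref{243}, since an orthonormal basis is its own dual basis. The only delicate point is bookkeeping: one must verify that the tensor identification of the zero fiber is compatible with taking classes factorwise, and that the Clifford isomorphisms induced by isometries act on generators as expected. Both follow from the base-change lemmas already cited.
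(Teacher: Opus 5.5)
Your proposal is correct and follows the paper's proof essentially step for step. You expand $D(\g_d,\boldsymbol\beta_d)$ in the $\boldsymbol\beta_d$-orthonormal basis $\{z\otimes X_i\}$, evaluate $\Phi_U$ and $\Phi_{\mathrm{Cl}}$ factorwise to get $X_i$ and $\gamma_0(X_i)$, and identify $\sum_i X_i\otimes\gamma_0(X_i)$ with $D_0$; your explicit tracing through $\tau^{-1}$, $T|_0^{-1}$ and $\operatorname{ev}_0$ just spells out what the paper leaves implicit.
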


\begin{proof}
By Subsubsection \ref{414}, $\Phi_U\bigl((z\otimes X_i)+I_0\mathcal U(\g_d)\bigr)=X_i$, and by Subsubsections \ref{422} and \ref{423},
$\Phi_{\mathrm{Cl}}\bigl(\boldsymbol{\gamma}_d(z\otimes X_i)+I_0\operatorname{Cl}(\p_d,\boldsymbol{\beta}_d)\bigr)
=\operatorname{ev}_0\circ T|_0^{-1}\bigl(z\otimes X_i+I_0\p_d\bigr)=\gamma_0(X_i)$. Hence
\[\Phi_A\bigl(D(\g_d,\boldsymbol\beta_d)+I_0\boldsymbol A_d\bigr)=\sum_{i=d+1}^n X_i\otimes\gamma_0(X_i)=D_0.\]
\end{proof}

\section{Dirac cohomology and specialization at zero}
\label{DiracCohomologyAndSpecialization}

In this section, for any algebraic family of Harish-Chandra modules $\boldsymbol{V}$ for $(\g_d,K)$,
we construct a canonical comparison morphism 
$H_{D(\g_d,\boldsymbol\beta_d)}(\boldsymbol V)|_0
\longrightarrow
H_{D_0}(\boldsymbol V|_0)$
from the zero fiber of the Dirac cohomology of $\boldsymbol{V}$,  into the 
Dirac cohomology of the $(\fg_0,K)$-module $\boldsymbol{V}|_0$. We give a sufficient condition for this morphism to be injective and
an example showing that it need not be surjective.

Throughout this section we put
\[
R:=\C[z],\qquad \C_0:=R/I_0=R/(z).
\]
We identify $\C_0$ with $\C$ by evaluation at zero.  At the beginning
of Section \ref{thedeformationfamily}, the fiber of an algebraic
family at zero was defined as a quotient by $I_0$.  For any
$R$-module $\boldsymbol M$, this quotient is canonically isomorphic to
 an extension of scalars via 
\[
\begin{split}
\chi_{\boldsymbol M}:\C_0\otimes_R\boldsymbol M
&\overset{\simeq}{\longrightarrow}
\boldsymbol M/I_0\boldsymbol M,\\
(f+I_0)\otimes m&\longmapsto fm+I_0\boldsymbol M.
\end{split}
\]
Abusing notation we shall use the  scalar-extension realization and simply write
\[
\boldsymbol M|_0:=\C_0\otimes_R\boldsymbol M.
\] 
The canonical map to the zero fiber is
therefore
\[
q_{0,\boldsymbol M}:\boldsymbol M\longrightarrow\boldsymbol M|_0,
\qquad m\longmapsto1\otimes m;
\]
under $\chi_{\boldsymbol M}$, it is the usual quotient map.  If
$F:\boldsymbol M\longrightarrow\boldsymbol N$ is $R$-linear, we write
\[
F|_0:=\mathbb I_{\C_0}\otimes F:
\boldsymbol M|_0\longrightarrow\boldsymbol N|_0
\]
for the corresponding map between the fibers.
\subsection{Dirac cohomology for the deformation family}\label{s51}
In this subsection we recall the definitions and constructions needed  for  the Dirac cohomology of an algebraic family of Harish-Chandra modules
for  the deformation family $(\g_d,K)$, as described in \cite[Sec.~4]{afentoulidisalmpanis2025diracoperatorsalgebraicfamilies}.

Fix a maximal isotropic subspace $\fp_0^-$ as in Subsubsection
\ref{2.5.1} (and, when $\dim\fp_0$ is odd, the vector $e_0$ and the sign $\varepsilon$ chosen there), and put
\[
S_0:=\bigwedge\fp_0^-,\qquad
\p_d^-:=zR\otimes_{\C}\fp_0^-,\qquad
\boldsymbol{S}_d:=\bigwedge_R\p_d^-.
\]
The isometry
\[
T:R\otimes_{\C}\fp\longrightarrow\p_d
\]
from Subsubsection \ref{422} restricts to an isomorphism of $R$-modules
$R\otimes_{\C}\fp_0^-\longrightarrow\p_d^-$ and hence induces an
isomorphism
\[
T_S:R\otimes_{\C}S_0\longrightarrow\boldsymbol{S}_d.
\]
Transporting  the action of $R\otimes_{\C}\operatorname{Cl}(\fp,{\beta})$ on $R\otimes_{\C}S_0$ through
$T$ and $T_S$ equips $\boldsymbol{S}_d$ with a
$\operatorname{Cl}(\p_d,\boldsymbol{\beta}_d)$-module structure.  We
denote the corresponding representation by
\[
\boldsymbol{\gamma}'_d:
\operatorname{Cl}(\p_d,\boldsymbol{\beta}_d)
\longrightarrow\operatorname{End}_R(\boldsymbol{S}_d).
\]
This is the spin module for the deformation family constructed in
\cite[Sec.~4.8.2]{afentoulidisalmpanis2025diracoperatorsalgebraicfamilies}.
Moreover, the action of $\fk$ on $\boldsymbol{S}_d$ lifts to an action
of $\widetilde K$ by $R$-linear automorphisms, and $T_S$ is
$\widetilde K$-equivariant; see
\cite[Sec.~4.8.3]{afentoulidisalmpanis2025diracoperatorsalgebraicfamilies}.

Let $\boldsymbol V$ be an algebraic family of Harish-Chandra modules
for $(\g_d,K)$.  In particular, $\boldsymbol V$ is flat over $R$; see
\cite[Sec.~3.3.1]{afentoulidisalmpanis2025diracoperatorsalgebraicfamilies}.
Set
\[
\boldsymbol M(\boldsymbol V)
:=\boldsymbol V\otimes_R\boldsymbol S_d.
\]
It is a module for
\[
\boldsymbol A_d
=\mathcal U(\g_d)\otimes_R
\operatorname{Cl}(\p_d,\boldsymbol\beta_d)
\]
via
\[
\pi_{\boldsymbol A_d,\boldsymbol V}(u\otimes c)(v\otimes s)
=\pi_{\mathcal U(\g_d),\boldsymbol V}(u)v
 \otimes\boldsymbol\gamma'_d(c)s,
\]
for $u\in\mathcal U(\g_d)$,
$c\in\operatorname{Cl}(\p_d,\boldsymbol\beta_d)$,
$v\in\boldsymbol V$, and $s\in\boldsymbol S_d$.  Together with the
tensor product action of $\widetilde K$, this makes
$\boldsymbol M(\boldsymbol V)$ a module for the algebraic family of
generalized pairs $(\boldsymbol A_d,\widetilde K)$, as in
\cite[Sec.~4.9.1]{afentoulidisalmpanis2025diracoperatorsalgebraicfamilies}.

We shall denote the operator by which $D(\g_d,\boldsymbol\beta_d)$ acts on $\boldsymbol M(\boldsymbol V)$ by 
\[
\delta_{\boldsymbol V}
:=\pi_{\boldsymbol A_d,\boldsymbol V}
  \bigl(D(\g_d,\boldsymbol\beta_d)\bigr)
\in\operatorname{End}_R\bigl(\boldsymbol M(\boldsymbol V)\bigr).
\]
The \textit{Dirac cohomology} of $\boldsymbol{V}$ is the $R$-module 
\[
H_{D(\g_d,\boldsymbol\beta_d)}(\boldsymbol V)
:=
\frac{\ker\delta_{\boldsymbol V}}
{\ker\delta_{\boldsymbol V}\cap\operatorname{im}\delta_{\boldsymbol V}}
\]
which carries a natural action of $\widetilde K$ by automorphisms of $R$-modules.  This is the definition of \cite[Sec.~4.9.2]{afentoulidisalmpanis2025diracoperatorsalgebraicfamilies}, specialized to the deformation family.

\subsection{The specialized  Dirac operator $\delta_{\boldsymbol V}|_0$}\label{s52}

In this subsection we show that  for a $(\g_d,K)$-module $\boldsymbol{V}$, the quotient space \[\frac{\ker(\delta_{\boldsymbol V}|_0)}
{\ker(\delta_{\boldsymbol V}|_0)
 \cap\operatorname{im}(\delta_{\boldsymbol V}|_0)}\] that is obtained from the specialization of the action of the Dirac operator of the deformation family is canonically isomorphic
to the Dirac cohomology 
\[H_{D_0}(\boldsymbol V|_0).\]  

We first record explicitly how tensor products behave under
specialization.  If $\boldsymbol M$ and $\boldsymbol N$ are
$R$-modules, there is a natural isomorphism
\begin{equation}
\begin{split}
\vartheta_{\boldsymbol M,\boldsymbol N}:{}
(\boldsymbol M|_0)\otimes_{\C}(\boldsymbol N|_0)
&\overset{\simeq}{\longrightarrow}
(\boldsymbol M\otimes_R\boldsymbol N)|_0,\\
(a\otimes m)\otimes(b\otimes n)
&\longmapsto ab\otimes(m\otimes n).
\end{split}
\label{eq:tensor-base-change}
\end{equation} 
This is the usual compatibility of 
 extension-of-scalars and tensor product.

In particular, suppose that $\boldsymbol A$ is an $R$-algebra and that
\[
\pi_{\boldsymbol A,\boldsymbol M}:
\boldsymbol A\longrightarrow\operatorname{End}_R(\boldsymbol M)
\]
is a representation.  We denote the corresponding action map by
\[
\widetilde{\pi}_{\boldsymbol A,\boldsymbol M}:
\boldsymbol A\otimes_R\boldsymbol M
\longrightarrow\boldsymbol M,
\qquad
x\otimes m\longmapsto
\pi_{\boldsymbol A,\boldsymbol M}(x)(m).
\]
The specialized representation of $\boldsymbol A|_0$ on
$\boldsymbol M|_0$ has associated action map given on pure tensors by
\begin{equation}
\begin{split}
&\widetilde{\pi}_{\boldsymbol A|_0,\boldsymbol M|_0}
\bigl((a\otimes x)\otimes(b\otimes m)\bigr)\\
&\hspace{20mm}
=ab\otimes
\widetilde{\pi}_{\boldsymbol A,\boldsymbol M}(x\otimes m)
=ab\otimes
\pi_{\boldsymbol A,\boldsymbol M}(x)(m),
\end{split}
\label{eq:specialized-action}
\end{equation}
for $a,b\in\C_0$, $x\in\boldsymbol A$, and
$m\in\boldsymbol M$.  Consequently, the specialization of the
operator $\pi_{\boldsymbol A,\boldsymbol M}(x)$ is
\[
\pi_{\boldsymbol A|_0,\boldsymbol M|_0}(1\otimes x).
\]

The constructions above and the results of Section
\ref{thedeformationfamily} give $\widetilde K$-equivariant
isomorphisms
\[
\Phi_A:\boldsymbol A_d|_0\overset{\simeq}{\longrightarrow}A_0,
\qquad
\Phi_S:\boldsymbol S_d|_0\overset{\simeq}{\longrightarrow}S_0.
\]
Here $\Phi_A$ is precisely the algebra isomorphism already
constructed in Section \ref{thedeformationfamily}.  Its construction
combines the zero-fiber isomorphism for universal enveloping algebras
from Subsubsection \ref{414}, the zero-fiber isomorphism for Clifford
algebras from Subsubsection \ref{423}, and the tensor-product
identification \eqref{eq:tensor-base-change}.   

Here $\Phi_S$ is the inverse of the zero fiber of $T_S$, followed by the isomorphism
$(R\otimes_{\C}S_0)|_0\simeq S_0$.  By the last lemma of Section
\ref{thedeformationfamily},
\[
\Phi_A\bigl(1\otimes D(\g_d,\boldsymbol\beta_d)\bigr)
=D_0.
\]
Under $\chi_{\boldsymbol A_d}$, the element
$1\otimes D(\g_d,\boldsymbol\beta_d)$ is the quotient class
$D(\g_d,\boldsymbol\beta_d)+I_0\boldsymbol A_d$ used in that lemma.

Put $V_0:=\boldsymbol V|_0$ and equip it with the
$(\fg_0,K)$-module structure transported through
$\eta:\g_d|_0\longrightarrow\fg_0$.  There is a natural
$\widetilde K$-equivariant isomorphism
\[
\Phi_{\boldsymbol V}:
\boldsymbol M(\boldsymbol V)|_0
\overset{\simeq}{\longrightarrow}V_0\otimes_{\C}S_0,
\]
given  explicitly by 
\[
\Phi_{\boldsymbol V}
:=(\mathbb I_{V_0}\otimes\Phi_S)
\circ\vartheta_{\boldsymbol V,\boldsymbol S_d}^{-1}.
\]
 
\begin{lemma*}[Compatibility of the Dirac operator with the zero fiber]
Under the canonical isomorphism
\[
\Phi_{\boldsymbol V}:
\boldsymbol M(\boldsymbol V)|_0
\overset{\simeq}{\longrightarrow}
V_0\otimes_{\C}S_0,
\]
the specialization of $\delta_{\boldsymbol V}$ is identified with the
Dirac operator $D_0(V_0)$.  More precisely,
\begin{equation}
\Phi_{\boldsymbol V}\circ(\delta_{\boldsymbol V}|_0)
=
D_0(V_0)\circ\Phi_{\boldsymbol V}.
\label{eq:fiber-intertwining}
\end{equation}
Equivalently, the following diagram commutes:
\[
\xymatrix@C=55pt{
\boldsymbol M(\boldsymbol V)|_0
\ar[r]^{\delta_{\boldsymbol V}|_0}
\ar[d]_{\Phi_{\boldsymbol V}}^{\simeq}
&
\boldsymbol M(\boldsymbol V)|_0
\ar[d]^{\Phi_{\boldsymbol V}}_{\simeq}
\\
V_0\otimes_{\C}S_0
\ar[r]_{D_0(V_0)}
&
V_0\otimes_{\C}S_0.
}
\]
\end{lemma*}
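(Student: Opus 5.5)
Both sides of \eqref{eq:fiber-intertwining} are $\C$-linear, so it suffices to check them on elements of the form $\vartheta_{\boldsymbol V,\boldsymbol S_d}\bigl((1\otimes v)\otimes(1\otimes s)\bigr)=1\otimes(v\otimes s)$ with $v\in\boldsymbol V$, $s\in\boldsymbol S_d$, which span $\boldsymbol M(\boldsymbol V)|_0$. The plan is to write the family Dirac operator in an adapted basis, specialize it termwise with \eqref{eq:specialized-action}, and then recognize each resulting term as an action of $X_i$ on $V_0$ and of $\gamma_0(X_i)$ on $S_0$.

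Fix a $\beta_0$-orthonormal basis $\{X_{d+1},\dots,X_n\}$ of $\fp_0$, so that, as recalled above,
\[
\delta_{\boldsymbol V}(v\otimes s)=\sum_{i=d+1}^n\pi_{\mathcal U(\g_d),\boldsymbol V}(z\otimes X_i)v\otimes\boldsymbol\gamma'_d\bigl(\boldsymbol\gamma_d(z\otimes X_i)\bigr)s .
\]
By \eqref{eq:specialized-action} and the definition $F|_0=\mathbb I_{\C_0}\otimes F$, we have $\delta_{\boldsymbol V}|_0(1\otimes(v\otimes s))=1\otimes\delta_{\boldsymbol V}(v\otimes s)$. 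Applying $\vartheta^{-1}_{\boldsymbol V,\boldsymbol S_d}$ then splits each summand as
\[
\bigl(1\otimes (z\otimes X_i)v\bigr)\otimes\bigl(1\otimes\boldsymbol\gamma'_d(\boldsymbol\gamma_d(z\otimes X_i))s\bigr).
\]
After that, applying $\mathbb I_{V_0}\otimes\Phi_S$ reduces the lemma to two fiberwise identities.

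The first identity concerns $V_0$. The $(\fg_0,K)$-structure on $V_0$ is transported through $\eta$, and $\eta\bigl((z\otimes X_i)+I_0\g_d\bigr)=X_i$ by the lemma of Subsubsection \ref{df}. Therefore $X_i\cdot(1\otimes v)=1\otimes(z\otimes X_i)v$ holds by definition of the specialized $\g_d|_0$-action.

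The second identity concerns $S_0$:
\[
\Phi_S\bigl(1\otimes\boldsymbol\gamma'_d(\boldsymbol\gamma_d(z\otimes X_i))s\bigr)=\gamma'_{\fp_0^-,\beta_0}(\gamma_0(X_i))\,\Phi_S(1\otimes s).
\]
To prove it, recall that $\boldsymbol\gamma'_d$ is defined by transporting the $R\otimes\operatorname{Cl}(\fp,\beta)$-action on $R\otimes S_0$ through $T$ and $T_S$. Since $T(1\otimes X_i)=z\otimes X_i$, the element $\boldsymbol\gamma_d(z\otimes X_i)$ acts on $\boldsymbol S_d$ as $T_S\circ(1\otimes\gamma'(\gamma_0(X_i)))\circ T_S^{-1}$. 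Specializing at zero and composing with $(R\otimes S_0)|_0\cong S_0$ gives the claim, because $\Phi_S$ is exactly $(T_S|_0)^{-1}$ followed by evaluation.

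Summing over $i$ yields $\Phi_{\boldsymbol V}(\delta_{\boldsymbol V}|_0(1\otimes(v\otimes s)))=\sum_i X_i(1\otimes v)\otimes\gamma_0(X_i)\Phi_S(1\otimes s)$. By the formula $D_0=\sum_i X_i\otimes\gamma_0(X_i)$, the right-hand side is $D_0(V_0)\,\Phi_{\boldsymbol V}(1\otimes(v\otimes s))$, which proves \eqref{eq:fiber-intertwining}.

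The main obstacle is bookkeeping in the second identity: checking that $\Phi_S$ intertwines the specialized $\boldsymbol\gamma'_d$ with $\gamma'_{\fp_0^-,\beta_0}$ composed with $\Phi_{\mathrm{Cl}}$. One must also confirm that this is consistent with $\Phi_A$ mapping $1\otimes D(\g_d,\boldsymbol\beta_d)$ to $D_0$ (last lemma of Section \ref{thedeformationfamily}), in other words that $\Phi_{\boldsymbol V}$ is $\Phi_A$-semilinear. I would first verify this for $c=\boldsymbol\gamma_d(z\otimes X)$ as above, which is all that is needed here.
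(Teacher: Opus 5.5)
Your proof is correct. It rests on the same underlying computation as the paper's proof, but the organization differs.

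The paper first rewrites $\delta_{\boldsymbol V}|_0$ as $\pi_{\boldsymbol A_d|_0,V_0}(\overline{\boldsymbol D})$ with $\overline{\boldsymbol D}=1\otimes D(\g_d,\boldsymbol\beta_d)$. It then asserts that $\Phi_{\boldsymbol V}$ intertwines the action maps of $\boldsymbol A_d|_0$ and $A_0$ through $\Phi_A$, saying only that this ``follows directly, on pure tensors.'' Finally it invokes the earlier lemma $\Phi_A(\overline{\boldsymbol D})=D_0$.

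You never pass through $\Phi_A$. You expand $\delta_{\boldsymbol V}$ in a $\beta_0$-orthonormal basis and check the two factorwise identities only for the generators $z\otimes X_i$ that actually occur. Both checks are done correctly: the $V_0$-identity via $\eta\bigl((z\otimes X_i)+I_0\g_d\bigr)=X_i$, and the $S_0$-identity from the transport-of-structure definition of $\boldsymbol\gamma'_d$ through $T$ and $T_S$.

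Your route makes explicit the verification the paper leaves implicit, and it is self-contained. In particular, the consistency-with-$\Phi_A$ worry in your last paragraph is not needed for your argument. The paper's route instead gives a basis-free, reusable statement: $\Phi_A$-semilinearity of $\Phi_{\boldsymbol V}$ on all of $\boldsymbol A_d|_0$. That statement yields the same compatibility for any other element at once, for example the Casimir $\Omega_d$.
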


\begin{proof}
Put
\[
\boldsymbol D:=D(\g_d,\boldsymbol\beta_d),
\qquad
\overline{\boldsymbol D}:=1\otimes\boldsymbol D
\in\boldsymbol A_d|_0.
\]
For $a\in\C_0$ and
$m\in\boldsymbol M(\boldsymbol V)$, formula
\eqref{eq:specialized-action} gives
\begin{align*}
(\delta_{\boldsymbol V}|_0)(a\otimes m)
&=
a\otimes\delta_{\boldsymbol V}(m)\\
&=
a\otimes
\pi_{\boldsymbol A_d,\boldsymbol V}
(\boldsymbol D)(m)\\
&=
\widetilde{\pi}_{\boldsymbol A_d|_0,V_0}
\bigl(
\overline{\boldsymbol D}\otimes(a\otimes m)
\bigr).
\end{align*}
Hence 
\[
\delta_{\boldsymbol V}|_0
=
\pi_{\boldsymbol A_d|_0,V_0}
(\overline{\boldsymbol D}).
\]

By the definitions of $\Phi_A$ and $\Phi_{\boldsymbol V}$, the
corresponding action maps satisfy
\[
\Phi_{\boldsymbol V}
\bigl(
\widetilde{\pi}_{\boldsymbol A_d|_0,V_0}
(x_0\otimes m_0)
\bigr)
=
\widetilde{\pi}_{A_0,V_0}
\bigl(
\Phi_A(x_0)\otimes\Phi_{\boldsymbol V}(m_0)
\bigr)
\]
for every $x_0\in\boldsymbol A_d|_0$ and
$m_0\in\boldsymbol M(\boldsymbol V)|_0$.
This follows directly, on
pure tensors, from the zero-fiber identifications for the universal
enveloping algebra and the Clifford algebra, together with the
definition of $\Phi_S$ and the tensor-product identification
\eqref{eq:tensor-base-change}.

Since
\[
\Phi_A(\overline{\boldsymbol D})=D_0,
\]
we obtain, for every
$m_0\in\boldsymbol M(\boldsymbol V)|_0$,
\begin{align*}
\Phi_{\boldsymbol V}
\bigl((\delta_{\boldsymbol V}|_0)(m_0)\bigr)
&=
\Phi_{\boldsymbol V}
\bigl(
\widetilde{\pi}_{\boldsymbol A_d|_0,V_0}
(\overline{\boldsymbol D}\otimes m_0)
\bigr)\\
&=
\widetilde{\pi}_{A_0,V_0}
\bigl(
\Phi_A(\overline{\boldsymbol D})
\otimes\Phi_{\boldsymbol V}(m_0)
\bigr)\\
&=
\widetilde{\pi}_{A_0,V_0}
\bigl(
D_0\otimes\Phi_{\boldsymbol V}(m_0)
\bigr)\\
&=
\pi_{A_0,V_0}(D_0)
\bigl(\Phi_{\boldsymbol V}(m_0)\bigr)\\
&=
D_0(V_0)\bigl(\Phi_{\boldsymbol V}(m_0)\bigr).
\end{align*}
This proves \eqref{eq:fiber-intertwining}.
\end{proof}
\begin{proposition*}
The isomorphism $\Phi_{\boldsymbol V}$ induces a canonical
$\widetilde K$-equivariant isomorphism
\[
\overline\Phi_{\boldsymbol V}:
\frac{\ker(\delta_{\boldsymbol V}|_0)}
{\ker(\delta_{\boldsymbol V}|_0)
 \cap\operatorname{im}(\delta_{\boldsymbol V}|_0)}
\overset{\simeq}{\longrightarrow}
H_{D_0}(V_0),
\]
sending the class of $m_0\in\ker(\delta_{\boldsymbol V}|_0)$ to the
class of $\Phi_{\boldsymbol V}(m_0)$.
\end{proposition*}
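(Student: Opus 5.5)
The proposition follows from the preceding lemma by a general linear-algebra fact. If two operators are conjugate by a linear isomorphism, that isomorphism matches their kernels and images, and hence the quotients ``kernel modulo kernel-intersect-image''. Concretely, write $\delta:=\delta_{\boldsymbol V}|_0$ and $D:=D_0(V_0)$. The lemma gives $\Phi_{\boldsymbol V}\circ\delta=D\circ\Phi_{\boldsymbol V}$, with $\Phi_{\boldsymbol V}$ bijective.

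First I will show $\Phi_{\boldsymbol V}(\ker\delta)=\ker D$. If $\delta(m_0)=0$, then $D(\Phi_{\boldsymbol V}(m_0))=\Phi_{\boldsymbol V}(\delta(m_0))=0$. Conversely, if $D(w)=0$, put $m_0:=\Phi_{\boldsymbol V}^{-1}(w)$. Then $\Phi_{\boldsymbol V}(\delta(m_0))=0$, so $\delta(m_0)=0$ by injectivity.

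Next, $\Phi_{\boldsymbol V}(\operatorname{im}\delta)=\operatorname{im}D$, since $\Phi_{\boldsymbol V}(\delta(m))=D(\Phi_{\boldsymbol V}(m))$ and $\Phi_{\boldsymbol V}$ is surjective. Because $\Phi_{\boldsymbol V}$ is a bijection, it carries intersections to intersections, so
\[
\Phi_{\boldsymbol V}(\ker\delta\cap\operatorname{im}\delta)=\ker D\cap\operatorname{im}D .
\]
It follows that $\Phi_{\boldsymbol V}$ restricts to an isomorphism $\ker\delta\to\ker D$ carrying the subspace $\ker\delta\cap\operatorname{im}\delta$ onto $\ker D\cap\operatorname{im}D$. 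It therefore descends to a linear isomorphism $\overline\Phi_{\boldsymbol V}$ of the quotients, given by the stated formula $[m_0]\mapsto[\Phi_{\boldsymbol V}(m_0)]$. Well-definedness and injectivity both come from the equality of the subspaces. The target quotient is $H_{D_0}(V_0)$ by definition.

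For $\widetilde K$-equivariance, I will first check that $\delta$ and $D$ are $\widetilde K$-equivariant. For $D$, this is stated when $D_0(V)$ is introduced. For $\delta$, the family Dirac operator is $K$-invariant, being the image of a canonical $K$-fixed element, so $\delta_{\boldsymbol V}$ commutes with $\widetilde K$, and hence so does its specialization. It follows that all the kernels, images and intersections are $\widetilde K$-submodules, and both quotients carry induced $\widetilde K$-actions. Since $\Phi_{\boldsymbol V}$ is stated to be $\widetilde K$-equivariant, so is $\overline\Phi_{\boldsymbol V}$.

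The only mildly delicate point is this equivariance of $\Phi_{\boldsymbol V}$, which is built from $\vartheta$ and $\Phi_S$. I would verify it on pure tensors, using that $\widetilde K$ acts on $\boldsymbol V\otimes_R\boldsymbol S_d$ diagonally and that $T_S$ is $\widetilde K$-equivariant. Everything else is formal.
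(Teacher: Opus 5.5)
Your proposal is correct and follows the paper's own argument. Since $\Phi_{\boldsymbol V}$ is an isomorphism intertwining $\delta_{\boldsymbol V}|_0$ with $D_0(V_0)$, it matches their kernels, images and kernel--image intersections, so it descends to the quotients. Your separate check of $\widetilde K$-equivariance spells out what the paper takes directly from the stated equivariance of $\Phi_{\boldsymbol V}$.
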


\begin{proof}
Equation \eqref{eq:fiber-intertwining} says that
the isomorphism $\Phi_{\boldsymbol V}$ intertwines the two linear operators
$\delta_{\boldsymbol V}|_0$ and $D_0(V_0)$.  It therefore
identifies their kernels, their images, and the intersections of
their kernels and images.  Passing to the corresponding quotients
proves the proposition.
\end{proof}

\subsection{The canonical comparison morphism}\label{s53}
In this subsection we construct a canonical $\widetilde K$-equivariant
morphism
\[
\mu_{\boldsymbol V}:
H_{D(\g_d,\boldsymbol\beta_d)}(\boldsymbol V)|_0
\longrightarrow
H_{D_0}(V_0),
\]
which we call the \textit{comparison morphism} of $\boldsymbol V$.

Recall that
\[
q_0=q_{0,\boldsymbol M(\boldsymbol V)}:
\boldsymbol M(\boldsymbol V)
\longrightarrow
\boldsymbol M(\boldsymbol V)|_0,
\qquad
m\longmapsto1\otimes m,
\]
is the natural map to the zero fiber. Under the identification
$\chi_{\boldsymbol M(\boldsymbol V)}$, it is the quotient map modulo
$z$. Since
$\delta_{\boldsymbol V}|_0=\mathbb I_{\C_0}\otimes\delta_{\boldsymbol V}$,
we have
\[
q_0\bigl(\delta_{\boldsymbol V}(m)\bigr)
=
(\delta_{\boldsymbol V}|_0)\bigl(q_0(m)\bigr),
\qquad
m\in\boldsymbol M(\boldsymbol V),
\]
and hence
\[
q_0(\ker\delta_{\boldsymbol V})
\subseteq
\ker(\delta_{\boldsymbol V}|_0).
\]
Moreover, if
\[
s\in
\ker\delta_{\boldsymbol V}
\cap
\operatorname{im}\delta_{\boldsymbol V},
\]
then $s=\delta_{\boldsymbol V}(m)$ for some
$m\in\boldsymbol M(\boldsymbol V)$, and hence
\[
q_0(s)
=
(\delta_{\boldsymbol V}|_0)\bigl(q_0(m)\bigr)
\in\operatorname{im}(\delta_{\boldsymbol V}|_0).
\]
It follows that
\[
q_0\bigl(
\ker\delta_{\boldsymbol V}
\cap
\operatorname{im}\delta_{\boldsymbol V}
\bigr)
\subseteq
\ker(\delta_{\boldsymbol V}|_0)
\cap
\operatorname{im}(\delta_{\boldsymbol V}|_0).
\]
Consequently, $q_0$ induces a canonical $\widetilde K$-equivariant
morphism
\[
\overline q_0:
H_{D(\g_d,\boldsymbol\beta_d)}(\boldsymbol V)
\longrightarrow
\frac{\ker(\delta_{\boldsymbol V}|_0)}
{\ker(\delta_{\boldsymbol V}|_0)
 \cap\operatorname{im}(\delta_{\boldsymbol V}|_0)},
\]
given by
\[
\overline q_0
\bigl(s+\ker\delta_{\boldsymbol V}\cap\operatorname{im}\delta_{\boldsymbol V}\bigr)
=
q_0(s)+\ker(\delta_{\boldsymbol V}|_0)\cap\operatorname{im}(\delta_{\boldsymbol V}|_0),
\qquad
s\in\ker\delta_{\boldsymbol V}.
\]
Let
\[
\overline\Phi_{\boldsymbol V}:
\frac{\ker(\delta_{\boldsymbol V}|_0)}
{\ker(\delta_{\boldsymbol V}|_0)
 \cap\operatorname{im}(\delta_{\boldsymbol V}|_0)}
\overset{\simeq}{\longrightarrow}
H_{D_0}(V_0)
\]
be the $\widetilde K$-equivariant isomorphism of the proposition in
Subsection \ref{s52}. The composite
\[
\overline\Phi_{\boldsymbol V}\circ\overline q_0:
H_{D(\g_d,\boldsymbol\beta_d)}(\boldsymbol V)
\longrightarrow
H_{D_0}(V_0)
\]
is $\widetilde K$-equivariant, and it is $R$-linear when the target
is regarded as an $R$-module through the evaluation homomorphism
\[
R\longrightarrow\C_0\simeq\C.
\]
In particular, $I_0$ acts trivially on the target. The composite
therefore vanishes on
\[
I_0H_{D(\g_d,\boldsymbol\beta_d)}(\boldsymbol V)
\]
and factors uniquely through the zero fiber
$H_{D(\g_d,\boldsymbol\beta_d)}(\boldsymbol V)|_0$ of the Dirac
cohomology of $\boldsymbol V$. We have proved the following.

\begin{corollary*}
There is a canonical $\widetilde K$-equivariant morphism
\begin{equation}
\mu_{\boldsymbol V}:
H_{D(\g_d,\boldsymbol\beta_d)}(\boldsymbol V)|_0
\longrightarrow
H_{D_0}(V_0).
\label{eq:canonical-comparison}
\end{equation}
More explicitly, if $s\in\ker\delta_{\boldsymbol V}$, then
\[
\mu_{\boldsymbol V}
\left(
1\otimes
\left(
s+
\ker\delta_{\boldsymbol V}
\cap\operatorname{im}\delta_{\boldsymbol V}
\right)
\right)
=
\Phi_{\boldsymbol V}(q_0(s))
+
\ker D_0(V_0)\cap\operatorname{im}D_0(V_0).
\]
\end{corollary*}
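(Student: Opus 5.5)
The plan is to assemble the morphism $\mu_{\boldsymbol V}$ in three layers: a chain-level map, passage to the quotients defining cohomology, and then factorization through the zero fiber. All ingredients are already available in the discussion preceding the statement.

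\emph{Step 1: the chain-level map.} Start with the natural map $q_0:\boldsymbol M(\boldsymbol V)\to\boldsymbol M(\boldsymbol V)|_0$, $m\mapsto 1\otimes m$. Since $\delta_{\boldsymbol V}|_0=\mathbb I_{\C_0}\otimes\delta_{\boldsymbol V}$, we have $q_0\circ\delta_{\boldsymbol V}=(\delta_{\boldsymbol V}|_0)\circ q_0$. Consequently $q_0$ sends $\ker\delta_{\boldsymbol V}$ into $\ker(\delta_{\boldsymbol V}|_0)$ and $\operatorname{im}\delta_{\boldsymbol V}$ into $\operatorname{im}(\delta_{\boldsymbol V}|_0)$. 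In particular it sends $\ker\delta_{\boldsymbol V}\cap\operatorname{im}\delta_{\boldsymbol V}$ into $\ker(\delta_{\boldsymbol V}|_0)\cap\operatorname{im}(\delta_{\boldsymbol V}|_0)$, so it induces $\overline q_0$ on the quotients.

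\emph{Step 2: equivariance.} $\widetilde K$-equivariance of $\overline q_0$ holds because $\widetilde K$ acts $R$-linearly on $\boldsymbol M(\boldsymbol V)$ and, on the fiber, by $\mathbb I_{\C_0}\otimes(\cdot)$. Next compose $\overline q_0$ with the $\widetilde K$-equivariant isomorphism $\overline\Phi_{\boldsymbol V}$ from the proposition of Subsection \ref{s52}. That proposition rests on the intertwining relation \eqref{eq:fiber-intertwining}, which in turn comes from $\Phi_A(1\otimes D(\g_d,\boldsymbol\beta_d))=D_0$.

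\emph{Step 3: factoring through the zero fiber.} Check that $\overline\Phi_{\boldsymbol V}\circ\overline q_0$ is $R$-linear when $H_{D_0}(V_0)$ is regarded as an $R$-module via evaluation at $0$; this reduces to $q_0(fm)=f(0)q_0(m)$. Hence the composite kills $I_0H_{D(\g_d,\boldsymbol\beta_d)}(\boldsymbol V)$. Since the zero fiber is $\C_0\otimes_R(\cdot)$, the universal property of extension of scalars gives a unique $\C$-linear map $\mu_{\boldsymbol V}$ with $\mu_{\boldsymbol V}(1\otimes x)=\overline\Phi_{\boldsymbol V}(\overline q_0(x))$. This is exactly the explicit formula in the statement. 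Equivariance of $\mu_{\boldsymbol V}$ follows from uniqueness, because the $\widetilde K$-action on the fiber is $\mathbb I_{\C_0}\otimes(\cdot)$.

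The only point requiring care is the well-definedness on the intersection $\ker\cap\operatorname{im}$. There, a kernel element lying in the image lifts to an image element, and $q_0$ preserves both properties. This follows from the commutation relation in Step 1, so no genuine obstacle is expected.
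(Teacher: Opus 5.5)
Your proposal is correct and follows essentially the same route as the paper. You use $q_0\circ\delta_{\boldsymbol V}=(\delta_{\boldsymbol V}|_0)\circ q_0$ to obtain $\overline q_0$ on the quotients, compose with $\overline\Phi_{\boldsymbol V}$, and factor the resulting $R$-linear map (target viewed as an $R$-module via evaluation at $0$) through $H_{D(\g_d,\boldsymbol\beta_d)}(\boldsymbol V)|_0$; your justifications of equivariance via $\mathbb I_{\C_0}\otimes(\cdot)$ and of $R$-linearity via $q_0(fm)=f(0)\,q_0(m)$ simply make explicit what the paper asserts.
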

\subsection{An injectivity criterion for the comparison morphism}\label{s54}
In this subsection we give a sufficient condition for the comparison
morphism $\mu_{\boldsymbol V}$ of \eqref{eq:canonical-comparison} to be
injective.


\begin{proposition*}
Let $\boldsymbol V$ be an algebraic family of Harish-Chandra modules
for $(\g_d,K)$, and put $V_0:=\boldsymbol V|_0$.  Suppose that
\[
\ker D_0(V_0)\cap\operatorname{im}D_0(V_0)=0.
\]
Then the comparison morphism $\mu_{\boldsymbol V}$ of
\eqref{eq:canonical-comparison} is injective.
\end{proposition*}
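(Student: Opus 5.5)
The plan is to unwind the definition of $\mu_{\boldsymbol V}$ from the corollary in Subsection \ref{s53} and reduce injectivity to a torsion-freeness statement about $\boldsymbol M(\boldsymbol V)$ over $R=\C[z]$. Under the hypothesis $\ker D_0(V_0)\cap\operatorname{im}D_0(V_0)=0$, the target $H_{D_0}(V_0)$ is simply $\ker D_0(V_0)$. By the explicit formula, $\mu_{\boldsymbol V}$ then sends the class of $1\otimes[s]$, for $s\in\ker\delta_{\boldsymbol V}$, to $\Phi_{\boldsymbol V}(q_0(s))$ itself.

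First, every element of $H_{D(\g_d,\boldsymbol\beta_d)}(\boldsymbol V)|_0=\C_0\otimes_R H$ is of the form $1\otimes[s]$ with $s\in\ker\delta_{\boldsymbol V}$. This holds because $\C_0\otimes_R H\cong H/I_0H$ via $\chi_H$, so every element is a class of a single element of $H$. Hence it suffices to prove the following. If $s\in\ker\delta_{\boldsymbol V}$ and $\Phi_{\boldsymbol V}(q_0(s))=0$, then $[s]\in zH$.

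Since $\Phi_{\boldsymbol V}$ is an isomorphism, the condition $\Phi_{\boldsymbol V}(q_0(s))=0$ gives $q_0(s)=0$. Via $\chi_{\boldsymbol M(\boldsymbol V)}$ this means $s\in I_0\boldsymbol M(\boldsymbol V)=z\boldsymbol M(\boldsymbol V)$, so we may write $s=zm$ with $m\in\boldsymbol M(\boldsymbol V)$. Then $0=\delta_{\boldsymbol V}(s)=z\,\delta_{\boldsymbol V}(m)$ by $R$-linearity.

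The key step is to conclude from this that $\delta_{\boldsymbol V}(m)=0$. This requires $\boldsymbol M(\boldsymbol V)=\boldsymbol V\otimes_R\boldsymbol S_d$ to be $z$-torsion-free. I would argue it as follows. The module $\boldsymbol V$ is flat over $R$, as recalled in Subsection \ref{s51}, and so is torsion-free over the PID $R$. The module $\boldsymbol S_d=\bigwedge_R\p_d^-$ is free of finite rank, since $\p_d^-$ is free; equivalently, $T_S$ identifies it with $R\otimes_{\C}S_0$. Therefore $\boldsymbol M(\boldsymbol V)\cong\boldsymbol V^{\oplus\dim S_0}$ is torsion-free, and multiplication by $z$ on it is injective.

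With this, $m\in\ker\delta_{\boldsymbol V}$, so $s=zm\in z\ker\delta_{\boldsymbol V}$, and hence $[s]=z[m]\in zH$. Thus $1\otimes[s]=0$ in the zero fiber, which proves injectivity. The hypothesis on $D_0(V_0)$ enters exactly where we identify "the image of $\mu_{\boldsymbol V}$ is zero" with "$\Phi_{\boldsymbol V}(q_0(s))=0$". Without it, we would only know $q_0(s)\in\ker(\delta_{\boldsymbol V}|_0)\cap\operatorname{im}(\delta_{\boldsymbol V}|_0)$, and lifting that to $\ker\delta_{\boldsymbol V}\cap\operatorname{im}\delta_{\boldsymbol V}$ modulo $z$ could fail.

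The only potential obstacle is the torsion-freeness step, and the flatness of $\boldsymbol V$ together with the freeness of $\boldsymbol S_d$ over $R$ handles it directly.
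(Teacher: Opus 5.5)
Your proof is correct and rests on the same key step as the paper's: from $q_0(s)=0$ you write $s=zm$, and you use that $\boldsymbol M(\boldsymbol V)$ is $z$-torsion-free (by flatness of $\boldsymbol V$ and freeness of $\boldsymbol S_d$) to get $m\in\ker\delta_{\boldsymbol V}$. The only difference is that the paper first runs this argument on $\ker\delta_{\boldsymbol V}\cap\operatorname{im}\delta_{\boldsymbol V}$ to identify $H_{D(\g_d,\boldsymbol\beta_d)}(\boldsymbol V)|_0$ with $(\ker\delta_{\boldsymbol V})|_0$ before checking injectivity, whereas you show directly that the kernel of $\mu_{\boldsymbol V}$ vanishes; that identification is not needed for injectivity, so your route is slightly more economical.
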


\begin{proof}
Put
\[
\boldsymbol K_{\boldsymbol V}
:=\ker\delta_{\boldsymbol V},
\qquad
\boldsymbol J_{\boldsymbol V}
:=
\ker\delta_{\boldsymbol V}
\cap\operatorname{im}\delta_{\boldsymbol V}.
\]
Thus
\[
H_{D(\g_d,\boldsymbol\beta_d)}(\boldsymbol V)
=
\boldsymbol K_{\boldsymbol V}/\boldsymbol J_{\boldsymbol V}.
\]

By Subsection \ref{s53},
\[
q_0(\boldsymbol J_{\boldsymbol V})
\subseteq
\ker(\delta_{\boldsymbol V}|_0)
\cap\operatorname{im}(\delta_{\boldsymbol V}|_0).
\]
Under the isomorphism $\Phi_{\boldsymbol V}$, the space on the
right is identified, by \eqref{eq:fiber-intertwining}, with
\[
\ker D_0(V_0)\cap\operatorname{im}D_0(V_0),
\]
which is zero by hypothesis.  Hence
\[
q_0(\boldsymbol J_{\boldsymbol V})=0.
\]

Let $s\in\boldsymbol J_{\boldsymbol V}$.  Since $q_0(s)=0$, the
quotient description of the zero fiber as $\boldsymbol M(\boldsymbol V)|_0=\boldsymbol M(\boldsymbol V)/I_0\boldsymbol M(\boldsymbol V)$ gives
\[
s=zm
\]
for some $m\in\boldsymbol M(\boldsymbol V)$.  Moreover,
\[
0=\delta_{\boldsymbol V}(s)
=z\delta_{\boldsymbol V}(m).
\]

We now explain why multiplication by $z$ on $\boldsymbol M(\boldsymbol V)$ is injective. 
Since $\boldsymbol V$ is flat over $R$ and $\boldsymbol S_d$ is free
of finite rank over $R$, their tensor product, $\boldsymbol M(\boldsymbol V)$, is
flat over $R$.  As a flat module over a domain, $\boldsymbol M(\boldsymbol V)$ is torsion-free, see
\cite[Prop.~3.49]{rotmanHomologicalAlgebra}; in particular, multiplication by $z$ on
$\boldsymbol M(\boldsymbol V)$ is injective.

The last equation and injectivity of multiplication 
 by $z$ on $\boldsymbol M(\boldsymbol V)$ imply that 
 $\delta_{\boldsymbol V}(m)=0$.  Thus
$m\in\boldsymbol K_{\boldsymbol V}$, and consequently
\[
\boldsymbol J_{\boldsymbol V}
\subseteq z\boldsymbol K_{\boldsymbol V}=I_0 \boldsymbol K_{\boldsymbol V}.
\]
It follows that
\begin{align*}
H_{D(\g_d,\boldsymbol\beta_d)}(\boldsymbol V)|_0
&=
(\boldsymbol K_{\boldsymbol V}/
 \boldsymbol J_{\boldsymbol V})|_0=(\boldsymbol K_{\boldsymbol V}/
 \boldsymbol J_{\boldsymbol V})/(I_0(\boldsymbol K_{\boldsymbol V}/
 \boldsymbol J_{\boldsymbol V}))\\
 & =(\boldsymbol K_{\boldsymbol V}/
 \boldsymbol J_{\boldsymbol V})/((I_0\boldsymbol K_{\boldsymbol V}+ \boldsymbol J_{\boldsymbol V})/
 \boldsymbol J_{\boldsymbol V})\\
&\simeq
\frac{\boldsymbol K_{\boldsymbol V}}
{\boldsymbol J_{\boldsymbol V}
 +I_0\boldsymbol K_{\boldsymbol V}}=
\frac{\boldsymbol K_{\boldsymbol V}}
{I_0\boldsymbol K_{\boldsymbol V}}
=
\boldsymbol K_{\boldsymbol V}|_0.
\end{align*}

On the other hand, the hypothesis on the zero-fiber operator gives
\[
H_{D_0}(V_0)=\ker D_0(V_0).
\]
Under these identifications, by the corollary of Subsection \ref{s53},
$\mu_{\boldsymbol V}$ is the morphism
\[
\boldsymbol K_{\boldsymbol V}|_0
\longrightarrow
\ker D_0(V_0)
\]
that sends $1\otimes s$ to
$\Phi_{\boldsymbol V}(q_0(s))$.

It remains to prove that this morphism is injective.  Suppose that
$s\in\boldsymbol K_{\boldsymbol V}$ and
\[
\Phi_{\boldsymbol V}(q_0(s))=0.
\]
Since $\Phi_{\boldsymbol V}$ is an isomorphism, $q_0(s)=0$.  Hence
$s=zm$ for some $m\in\boldsymbol M(\boldsymbol V)$.  As above,
\[
0=\delta_{\boldsymbol V}(s)
=z\delta_{\boldsymbol V}(m)
\]
implies that $m\in\boldsymbol K_{\boldsymbol V}$.  Therefore
$s\in z\boldsymbol K_{\boldsymbol V}$, and so
$1\otimes s=0$ already in
$\boldsymbol K_{\boldsymbol V}|_0$.  This proves that
$\mu_{\boldsymbol V}$ is injective.\end{proof}

\begin{corollary*}
Let $\boldsymbol V$ be an algebraic family of Harish-Chandra modules
for $(\g_d,K)$ such that $V_0:=\boldsymbol V|_0$ is
infinitesimally unitary.
Then the comparison morphism $\mu_{\boldsymbol V}$ of
\eqref{eq:canonical-comparison} is injective.
\end{corollary*}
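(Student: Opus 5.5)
The plan is to reduce the corollary to the injectivity proposition of Subsection \ref{s54}. That proposition applies once we know $\ker D_0(V_0)\cap\operatorname{im}D_0(V_0)=0$, so the task is to derive this vanishing from the infinitesimal unitarity of $V_0$. This is the Cartan motion group analogue of the standard reductive fact that, on a unitary module, the Dirac operator is self-adjoint and hence its kernel meets its image trivially.

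\textbf{Step 1: an inner product on $V_0\otimes_{\C}S_0$.} Infinitesimal unitarity gives a positive definite Hermitian form $\langle\ ,\ \rangle_V$ on $V_0$. For it, every element of the real Lie algebra $\fg_0^{\sigma}$ acts skew-adjointly; in particular each $X\in\fp_0^{\sigma}$ does.

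On $S_0$ I would choose a positive definite Hermitian form $\langle\ ,\ \rangle_S$ for which $\gamma_0(X)$ is skew-adjoint for every $X\in\fp_0^{\sigma}$. Here $\gamma_0(X)^2=\tfrac12\beta_0(X,X)>0$, so the natural candidate is $\gamma_0(X)$ self-adjoint, and I will adjust by a factor of $i$ below.

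\begin{itemize}
\item A form with $\gamma_0(X)$ self-adjoint exists by the standard construction: average over the finite group generated by $\sqrt2\,\gamma_0(e_j)$, for a $\beta_0$-orthonormal basis $\{e_j\}$ of $\fp_0^{\sigma}$. Each $\sqrt2\,\gamma_0(e_j)$ is an involution, so it becomes unitary, and therefore self-adjoint, after averaging.
\item Then $\gamma_0(X)$ is self-adjoint for all real $X$, by linearity.
\end{itemize}

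Take the tensor product form on $V_0\otimes_{\C}S_0$. Each summand $e_j\otimes\gamma_0(e_j)$ of $D_0$ is a tensor product of a skew-adjoint operator and a self-adjoint operator, so it is skew-adjoint. Hence $D_0(V_0)$ is skew-adjoint, and equivalently $iD_0(V_0)$ is self-adjoint.

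\textbf{Step 2: the vanishing.} Suppose $x=D_0(V_0)y$ and $D_0(V_0)x=0$. Then
\[
\langle x,x\rangle=\langle D_0(V_0)y,x\rangle=-\langle y,D_0(V_0)x\rangle=0,
\]
so $x=0$. No completeness or finite-dimensionality is needed, since the argument is purely algebraic on the pre-Hilbert space $V_0\otimes_{\C}S_0$.

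\textbf{Step 3: conclusion.} Apply the injectivity proposition of Subsection \ref{s54}.

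\textbf{Main obstacle.} The only step needing care is the existence of the Hermitian form on $S_0$ with the right adjointness properties. This is standard Clifford-module theory, via the averaging argument in Step 1. One should also check that the $(\fg_0,K)$-structure on $V_0$, transported through $\eta$, is the one for which unitarity is assumed; this holds by definition.
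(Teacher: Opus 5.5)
Your proposal is correct and follows the paper's proof. Self-adjoint real Clifford generators on $S_0$, together with skew-adjoint action of $\fp_0^\sigma$ on $V_0$, make $D_0(V_0)$ skew-adjoint, so $\ker D_0(V_0)\cap\operatorname{im}D_0(V_0)=0$ and the injectivity proposition of Subsection~\ref{s54} applies. Your averaging over the finite group generated by the involutions $\sqrt2\,\gamma_0(e_j)$ simply supplies the existence of the Hermitian form on $S_0$, which the paper takes for granted.

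One small slip: drop your opening request that $\gamma_0(X)$ be skew-adjoint. That is impossible, since $\gamma_0(X)^2=\tfrac12\beta_0(X,X)>0$. The self-adjoint choice you actually use is the right one.
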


\begin{proof}
Choose a
$\beta_0$-orthonormal basis $\{e_1,\ldots,e_m\}$ of
$\fp_0^\sigma$, and equip $S_0$ with a positive-definite Hermitian
form for which the real Clifford generators are self-adjoint. Then
\[
D_0(V_0)
=
\sum_{i=1}^m
\pi_{\mathcal U(\fg_0),V_0}(e_i)
\otimes
\gamma'_{\fp_0^-,\beta_0}\bigl(\gamma_0(e_i)\bigr).
\]
Infinitesimal unitarity gives
\[
\pi_{\mathcal U(\fg_0),V_0}(e_i)^*
=
-\pi_{\mathcal U(\fg_0),V_0}(e_i),
\]
whereas
\[
\gamma'_{\fp_0^-,\beta_0}\bigl(\gamma_0(e_i)\bigr)^*
=
\gamma'_{\fp_0^-,\beta_0}\bigl(\gamma_0(e_i)\bigr).
\]
Consequently,
\[
D_0(V_0)^*=-D_0(V_0).
\]
This is the usual unitary adjointness argument for algebraic Dirac
operators, adapted to our Clifford convention; compare
\cite[Remark~3.2.4]{pandzic}.

If
\[
y\in
\ker D_0(V_0)\cap\operatorname{im}D_0(V_0),
\]
write $y=D_0(V_0)x$. Then
\[
\langle y,y\rangle
=
\langle D_0(V_0)x,y\rangle
=
-\langle x,D_0(V_0)y\rangle
=
0.
\]
Hence $y=0$. Thus infinitesimal unitarity implies the required
kernel--image condition.  
\end{proof}

\begin{example*}
 Let $V$ be a nonzero
infinitesimally unitary $(\fg,K)$-module for which the usual Dirac
cohomology $H_D(V)$ vanishes, and consider the restricted constant
family
\[
\boldsymbol V:=R\otimes_{\C}V
\]
as a $(\g_d,K)$-module.  The compatibility formula for the family
Dirac operator in
\cite[Sec.~4.7]{afentoulidisalmpanis2025diracoperatorsalgebraicfamilies}
shows that
\[
\delta_{\boldsymbol V}=z(1_R\otimes D(V)).
\]

By the unitary adjointness result
\cite[Remark~3.2.4]{pandzic}, we have
$H_D(V)=\ker D(V)$, so $D(V)$ is injective on
$V\otimes_{\C}S_0$.  Hence
\[
1_R\otimes D(V):
R\otimes_{\C}(V\otimes_{\C}S_0)
\longrightarrow
R\otimes_{\C}(V\otimes_{\C}S_0)
\]
is injective.  Under the canonical identification
\[
\boldsymbol M(\boldsymbol V)
\simeq
R\otimes_{\C}(V\otimes_{\C}S_0),
\]
multiplication by $z$ is also injective.  Since
\[
\delta_{\boldsymbol V}
=
z(1_R\otimes D(V)),
\]
it follows that $\ker\delta_{\boldsymbol V}=0$.

   In particular, the family has
trivial Dirac kernel--image intersection.  On the zero fiber,
$\fp_0$ acts trivially, and hence $D_0(V_0)=0$.  Consequently,
\[
H_{D(\g_d,\boldsymbol\beta_d)}(\boldsymbol V)|_0=0,
\qquad
H_{D_0}(V_0)=V_0\otimes_{\C}S_0.
\]
Thus $\mu_{\boldsymbol V}$ is injective but not surjective.  This example explicitly exhibits the failure of Dirac
cohomology to commute with specialization at zero.
\end{example*}
\printbibliography
\Addresses
\end{document}